\tikzstyle{vertex}=[circle, draw, inner sep=0pt, minimum size=4pt]
\newcommand{\vertex}{\node[vertex]}
\tikzset{->-/.style={decoration={
  markings,
  mark=at position #1 with {\arrow{>}}},postaction={decorate}}}
\newcommand{\meet}{\wedge}
\newcommand{\join}{\vee}
\newcommand{\rtr}{\mathrm{rtr}}
\newcommand{\ltr}{\mathrm{ltr}}
\newcommand{\tr}{\mathrm{tr}}
\newcommand{\meetAss}{\mathrm{\meet\text{-}ass}}
\newcommand{\joinAss}{\mathrm{\join\text{-}ass}}
\newcommand{\ass}{\mathrm{ass}}
\newcommand{\dis}{\mathrm{dis}}
\newcommand{\D}{\mathrm{D}}
\newcommand{\Z}{\mathrm{Z}}
\newcommand{\cg}{\cellcolor{lightgray}}
\newtheorem{theorem}{Theorem}[section]
\newtheorem{corollary}{Corollary}[section]
\newtheorem{proposition}{Proposition}[section]
\theoremstyle{definition}
\newtheorem{definition}{Definition}[section]
\newtheorem{example}{Example}[section]
\newtheorem{remark}{Remark}[section]
\newtheorem{notation}{Notation}[section]
\begin{document}

\title{Triangular norms on bounded trellises}

\author[a,b]{Lemnaouar Zedam\corref{co1}}
\ead{lemnaouar.zedam@ugent.be}
\author[b]{Bernard De Baets}
\ead{bernard.debaets@ugent.be}
\cortext[co1]{Corresponding author}
\address[a]{LMPA, Department of Mathematics, University of M’sila, 28000 M'sila, Algeria}
\address[b]{KERMIT, Department of Data Analysis and Mathematical Modelling, Ghent University, Coupure links 653, B-9000 Gent, Belgium}

\begin{abstract}
In this paper, we introduce the notion of a t-norm on bounded pseudo-ordered sets and in particular on bounded trellises (also known as weakly associative lattices), and provide some basic examples.
The impact of abandoning transitivity is considerable: on a proper bounded trellis, the meet operation is not a t-norm, and there might actually exist no or even multiple maximal t-norms. We provide a first generic construction method that allows to extend a t-norm on an interior range of a given $\meet$-semi-trellis to the entire $\meet$-semi-trellis. Also, 
we discuss at length an instantiation of this method based on a particular interior range, namely a finite sub-trellis of the set of right-transitive elements of a given trellis. We pay specific attention to bounded pseudo-chains and modular trellises.
\end{abstract}
\begin{keyword}
Binary operation; pseudo-ordered set; trellis; t-norm.
\end{keyword}
%\classification{06B05, 06B15}
\maketitle

\section{Introduction}
The study of triangular norms (t-norms, for short) has a long history. These mathematical operations were introduced as early as the 1940s by Menger~\cite{Menger1942} when generalizing the triangle inequality from classical metric spaces to statistical metric spaces (nowadays called probabilistic metric spaces). In such spaces, distances 
are no longer non-negative real numbers, but are described by distribution functions instead.
The axiomatic description of t-norms as used today is due to Schweizer and  Sklar (see, e.g.,~\cite{SchweizerSklar1963,SchweizerSklar1983}):
a t-norm is an increasing, commutative and associative binary operation on the unit interval with $1$ as neutral element. The key property here is associativity: it allows a t-norm to be extended to any number of arguments
in an unambiguous way. From an algebraic point of view, the unit interval $[0,1]$ equipped with a t-norm can be seen as a totally ordered commutative monoidal structure. There is deep and extensive knowledge on t-norms, such as the
ordinal sum characterization of continuous t-norms by Ling~\cite{Ling1965}.

Soon after it was realized that t-norms can also serve the role of the logical connective `and' in fuzzy set theory~\cite{AnthonySherwood1979,Dubois1980}, indispensable for defining the intersection of fuzzy sets, they witnessed a second boom. Researchers in the field of fuzzy set theory replaced the hitherto used standard minimum operation by a t-norm, and this in virtually any theoretical development (such as Zadeh's extension principle~\cite{Kerre2011,Zadeh1975}) or practical application (like in fuzzy optimization). Also mathematicians got involved, further expanding the knowledge on these operations, culminating in the seminal book
of Klement, Mesiar and Pap~\cite{Klement2013}. Undoubtedly, they also marked the inception of another flourishing subfield of fuzzy set theory --- and data science at large --- namely the theory of aggregation functions (see, e.g., \cite{Beliakov2007,Calvo2002,Grabisch2009}). Of similar importance and sharing a long history are the binary operations called copulas introduced by Sklar~\cite{Sklar1959}. A copula is a bivariate cumulative distribution function for which 
the marginal probability distribution of each variable is uniform on the unit interval $[0, 1]$. They are not associative, thus requiring appropriate definitions for higher dimensions. The latter problem disappears in the case of associative copulas. Interestingly, associative copulas are nothing else but 1-Lipschitz continuous t-norms~\cite{Nelsen2007}, explaining another wave of cross-fertilization between probability theory (studying copulas) and fuzzy set theory (studying t-norms).

Soon after Zadeh's seminal paper on fuzzy set~\cite{Zadeh1965}, it was already realized by Goguen~\cite{Goguen1967} that the lattice-theoretic setting (see, e.g.,~\cite{Birkhoff1967,DaveyPriestley2002}) is the most natural one for the development of fuzzy set theory. Notwithstanding the many achievements mentioned above, all situated in the comfortable setting of real analysis, it took until the 90s for the study of t-norms on more general structures to take off~\cite{BaetsMesiar1999,DeCooman1994}, in particular on partially ordered sets (posets, for short) and lattices, thus being related to the study of partially ordered semigroups~\cite{Fuchs2011}. Not surprisingly, research in this direction has been a lot slower, initially being focused on construction methods~\cite{Asici2014,Asici2016,Cayli2019,Karacal2020,Kesiciouglu2013}, and only recently more profound characterizations in terms of ordinal sums~\cite{DvorakHolcapek2020,Ouyang2021,Ouyang2022}.

Note that in a bounded lattice $(L, \leq, \meet, \join,0,1)$, the transitivity of the partial order relation $\leq$ and/or the associativity of the meet operation $\meet$ and join operation $\join$ play an important role.

Characteristic of the partial order relation of a poset or lattice is indeed its transitivity property, by far the most fascinating relational property. Above all, transitivity is a convenient property, simplifying mathematical reasoning until it becomes an automatism. However, many theoretical and practical developments warrant us to look beyond transitivity. For instance, the study of (absence of) transitivity in the comparison of random variables~\cite{BaetsMeyer2015,BaetsMeyer2006} has proven to more than intriguing and resulted in the framework of cycle-transitivity. Indeed, absence of transitivity can manifest itself in two flavors: 
the presence of cycles or preference loops (A better than B, B better than C, and C better than A) or simply incomparability (A better than B, B better than C, but A and C being incomparable). Although often inconvenient to mathematicians and computer scientists (see the interest in acyclic directed graphs, for instance), cycles have been shown to be of extreme importance in practical situations, e.g.~cycles in species competition structures such as tournaments preventing extinction and supporting biodiversity~\cite{Kerr2002,Reichenbach2007}, usually catalogued under the Rock-Paper-Scissors metaphor. Also incomparibility is a well-known phenomenon, such as the intransitivity of indifference~\cite{Fishburn1970}.

Motivated by the interest in non-transitive relations as well as our unrelenting mathematical curiosity, in this paper we set out to introduce and study t-norms on lesser-known, albeit interesting mathematical structures, namely the class of pseudo-ordered sets (psosets, for short), and in particular the subclass of trellises~\cite{Skala1971}. The latter are also known as tournament lattices~\cite{Fried1970}, non-associative lattices~\cite{Donald1971} or weakly associative lattices~\cite{Chajda1977,Fried1973b, Fried1975}. However, we prefer the designated technical term `trellis'. Psosets generalize posets by replacing the partial order relation by a more general reflexive and antisymmetric relation, while trellises do the same compared to lattices, but preserve the existence of meets and joins, {\em i.e.}~the existence of greatest lower bounds and smallest upper bounds of 2-element subsets~\cite{Chajda1977,Fried1973b, Fried1975}). However, the absence of transitivity of the pseudo-order relation is reflected in the absence of the associativity property of the meet and join operation of a trellis. Major contributions to the study of trellises have been made by Skala~\cite{Skala1971,Skala1972}, followed by several other scholars. 
For instance, Gladstien~\cite{Gladstien1973} proved that trellises of finite length are complete if and only if every cycle has a smallest upper bound and a greatest lower bound. Later on, this characterization was generalized to psosets in terms of joins of cycles and pseudo-chains~\cite{Bhatta2004,Rai2021a,Rai2021b}.

Our goal is to unravel how abandoning the transitivity property affects the notion of a t-norm on bounded psosets, and in particular on bounded trellises. The remainder of this paper is structured as follows. In the preliminary Section~\ref{Basic_concepts}, we recall the basic notions on psosets and trellises needed in this paper. We identify some potentially interesting subsets of a trellis in Section~\ref{Specific subsets of a trellis}. In Section~\ref{t-norms_on_psosets}, we extend the notion of a t-norm to the setting of bounded psosets and bounded trellises, and provide some examples. Our main contribution is to be found in Sections~\ref{Int_based-construction_of_t-norms} and~\ref{T-norms_based_on_lambda_A}, in which we present a first generic construction method based on a t-norm on an interior range of a given $\meet$-semi-trellis. We illustrate the method for a particular
interior operator with as range an appropriate finite subset of the set of right-transitive elements of a given trellis.
Finally, we present some concluding remarks and future research lines in Section~\ref{Conclusion}.

\section{Basic concepts}\label{Basic_concepts}
This section serves an introductory purpose. First, we recall some definitions and properties related to pseudo-ordered sets and trellises. Second, we present some specific elements of a trellis that will be needed throughout this paper.
 
\subsection{Pseudo-ordered sets and trellises}
In this subsection, we recall the notions of pseudo-ordered sets and trellises; more information can be found in~\cite{Fried1970,Skala1971,Skala1972}.
A \textit{pseudo-order (relation)} $\unlhd$ on a set $X$ is a binary relation on $X$ that is reflexive ({\em i.e.}, $x \unlhd x$, 
for any $x \in X$) and antisymmetric ({\em i.e.}, $x \unlhd y$ and $y \unlhd x$ implies $x = y$, for any $x,y \in X$). A set $X$ equipped
with a pseudo-order $\unlhd$ is called a \textit{pseudo-ordered set} (\textit{psoset}, for short) and is denoted by $\mathbb{P}=(X,\unlhd)$; a psoset that is not a poset is called a proper psoset.
For any two elements $a, b \in X$, if $a \unlhd b$ and $a\neq b$, then we write $a \lhd b$; if $a \unlhd b$ does not hold, then we also write $a \ntrianglelefteq b$. For any element $x\in X$, we introduce the following subsets of~$X$:
\[
\downarrow x=\{y\in X \mid y\unlhd x\}
\qquad \text{and} \qquad \uparrow x=\{y\in X \mid x\unlhd y\}\,.
\]

For a given psoset $\mathbb{P}=(X,\unlhd)$ and $x,y\in X$, we write $x\lesssim y$ if there exists a finite sequence $(x_{1}, \ldots, x_{n})$ of elements from $X$ such that $x \unlhd x_{1} \unlhd \ldots\unlhd x_{n} \unlhd y$. Note that the relation $\lesssim$ is a pre-order relation, {\em i.e.}, it is reflexive and transitive, but not necessarily antisymmetric.

Consider a subset $\mathcal{C}$ of $X$ and $x,y \in \mathcal{C}$. We write $x \lesssim_\mathcal{C} y $ if there exists a finite sequence $(x_{1}, \ldots, x_{n})$ of elements from $\mathcal{C}$ such that $x \unlhd x_{1} \unlhd \ldots\unlhd x_{n} \unlhd y$.  If for any $x,y \in \mathcal{C}$, $x \lesssim_\mathcal{C} y $ or $y \lesssim_\mathcal{C} x $ holds, then $\mathcal{C}$ is called a \textit{pseudo-chain}.
If for any $x,y \in \mathcal{C}$, both $x \lesssim_\mathcal{C} y $ and $y \lesssim_\mathcal{C} x $ hold,
then $\mathcal{C}$ is called a \textit{cycle}.
Note that every singleton $C=\{x\}$ is a trivial pseudo-chain as well as a trivial cycle. Due to the antisymmetry of $\unlhd$, any non-trivial cycle contains at least three elements.

Similarly as for partially ordered sets (posets, for short), a finite pseudo-ordered set can be represented by 
a \textit{Hasse-type diagram} (or, simply, \textit{Hasse diagram}) with the following difference: if $x$ and $y$ are not related, while in a poset set this would be implied by transitivity, then $x$ and $y$ are joined by a dashed edge. If $x\lesssim y$ and $y \unlhd x$, then $x$ and $y$ are joined by a directed edge going from $y$ to $x$.  

\begin{example}\label{example psoset} 
Let $\mathbb{P}=(\{a,b,c,d,e,f\}, \unlhd)$ be the psoset given by the Hasse diagram in Fig.~\ref{Fig01} and pseudo-order $\unlhd$
in Table~\ref{PseudoWithCycle}. Here, $b\unlhd d$, $d\unlhd e$, while $b\ntrianglelefteq e$. Also, $\{d,e,f\}$ is a cycle. 

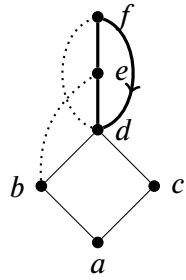
\begin{figure}[h]
\[\begin{tikzpicture}
\tikzstyle{estun}=[>=latex,thick,dotted]
    \vertex[fill] (a) at (0,0)  [label=below:$a$]  {};
    \vertex[fill] (b) at (-0.75,0.75)  [label=left:$b$]  {};
    \vertex[fill] (c) at (0.75,0.75)  [label=right:$c$]  {};
    \vertex[fill] (d) at (0,1.5)  [label=right:$d$]  {};
    \vertex[fill] (e) at (0,2.25)  [label=right:$e$]  {};
    \vertex[fill] (f) at (0,3)  [label=right:$f$]  {};
   
    \path
        (a) edge (b)
        (a) edge (c)
        (b) edge (d)
        (c) edge (d)
        ;
        \draw [very thick] (d) to (e);
        \draw [very thick] (e) to (f)
            
        ;
   \draw[estun] (b) to [bend left=30] (e);
   \draw[estun] (d) to [bend left=70] (f) 
   
   ;
   
  \draw[very thick, ->-=0.65] (f) to [bend left=70] (d)
 
           ;
\end{tikzpicture}\]
\caption{Hasse diagram of the psoset in Example~\ref{example psoset}.} \label{Fig01}
\end{figure}
\begin{table}[H]
 \begin{center}
 \begin{tabular}{|c|c|c|c|c|c|c|}
\hline 
$\unlhd$ & $a$ & $b$ & $c$ & $d$ & $e$ & $f$ \\
\hline   
$a$	     &$1$ & $1$& $1$ & $1$ & $1$ & $1$ \\ 
\hline 
$b$	     &$0$ & $1$& $0$ & $1$ & $0$ & $1$ \\ 
\hline 
$c$      &$0$ & $0$& $1$ & $1$ & $1$ & $1$ \\ 
\hline 
$d$      &$0$ & $0$& $0$ & $1$ & $1$ & $0$ \\
\hline 
$e$      &$0$ & $0$& $0$ & $0$ & $1$ & $1$ \\ 
\hline 
$f$      &$0$ & $0$& $0$ & $1$ & $0$ & $1$ \\ 
\hline 
\end{tabular}
    \caption{Pseudo-order of the bounded psoset in Example~\ref{example psoset}.}
    \label{PseudoWithCycle}
    \end{center}
\end{table}
\end{example}

The notions of \textit{minimal/maximal} element, \textit{smallest/greatest} element, \textit{lower/upper bound}, 
\textit{greatest lower bound} (or \textit{infimum}), 
\textit{smallest upper bound} (or \textit{supremum}) for psosets are defined in the same way as the corresponding notions for posets. For a subset $A$ of a psoset $\mathbb{P}=(X,\unlhd)$, the antisymmetry of the pseudo-order implies that if $A$ has an infimum (resp.\ supremum), then it is unique, and is denoted by $\bigwedge A$ (resp.\ $\bigvee A$). If $A=\{a, b\}$, 
then we  write $a\meet b$ (called \textit{meet}) instead of $\bigwedge \{a, b\}$ and $a\join b$ (called \textit{join}) instead of $\bigvee \{a, b\}$. A \textit{bounded psoset} is a psoset that has a smallest element denoted by $0$ and a greatest element denoted by $1$, {\em i.e.},~$0 \unlhd x \unlhd 1$, for any $x \in X$. A bounded psoset $\mathbb{P}=(X,\unlhd)$ is denoted by $\mathbb{P}=(X,\unlhd,0,1)$. 

\begin{definition}\cite{Gladstien1973}
A $\meet$-semi-trellis (resp.\ $\join$-semi-trellis) is a psoset $\mathbb{P}=(X,\unlhd)$ such that $x \meet y$
(resp.~$x \join y$) exists for all $x,y \in X$. A \textit{trellis} is a psoset that is both a $\meet$-semi-trellis and a $\join$-semi-trellis; it is denoted by $\mathbb{T}=(X, \unlhd, \meet, \join)$; a trellis that is not a lattice is called a proper trellis.
\end{definition}

A trellis can also be characterized as an algebra $(X, \meet, \join)$, where $X$ is a nonempty set and the binary operations $\meet$ and $\join$ satisfy the following properties,
for any $a,b,c\in X$~\cite{Skala1971}:
\begin{enumerate}[label=(\roman*)]
\item $a \join b = b \join a$ and $a \meet b = b \meet a$ (\textit{commutativity})\,;
\item $a \join( b \meet a) = a= a \meet (b \join a)$ (\textit{absorption})\,;
\item $a \join( (a \meet b) \join ( a \meet c)) = a= a \meet( (a \join b) \meet ( a \join c)) $ (\textit{part-preservation})\,.
\end{enumerate}

\begin{theorem}{\rm \cite{Skala1971}}
A set $X$ with two commutative, absorptive, and part-preserving operations $\meet$ and $\join$  is a trellis if
the pseudo-order $\unlhd$ is defined as follows: $a \unlhd b$ if $a \meet b=a$ and/or $a \join b=b$. 
The operations are also idempotent ({\em i.e.}, $x \meet x= x \join x=x$, for any $x \in X$).
\end{theorem}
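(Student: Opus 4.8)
The plan is to establish, in this order: (i) that $\meet$ and $\join$ are idempotent; (ii) that the two conditions appearing in the definition of $\unlhd$ are equivalent, so that $\unlhd$ is unambiguously defined; (iii) that $\unlhd$ is reflexive and antisymmetric, hence a pseudo-order; and (iv) that with respect to this $\unlhd$ the element $a\meet b$ is the infimum and $a\join b$ the supremum of $\{a,b\}$ --- which is exactly what it means for $(X,\unlhd,\meet,\join)$ to be a trellis.

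For (i), specialise the absorption law $a\join(b\meet a)=a$ to $b=a$ to obtain $a=a\join(a\meet a)$, substitute this for the left-hand factor of $a\meet a$, and use commutativity together with the dual absorption law $a\meet(b\join a)=a$ to collapse the result to $a$; thus $a\meet a=a$, and $a\join a=a$ follows dually. For (ii), if $a\meet b=a$ then $a\join b=(a\meet b)\join b=b\join(a\meet b)$, which equals $b$ by absorption (the law $a\join(b\meet a)=a$ with the roles of $a$ and $b$ interchanged); the reverse implication is dual. With (ii) in hand, reflexivity of $\unlhd$ is immediate from (i), and antisymmetry is immediate from the commutativity of $\meet$: $a\unlhd b$ and $b\unlhd a$ force $a=a\meet b=b\meet a=b$. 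Hence $(X,\unlhd)$ is a psoset, in which infima and suprema, when they exist, are unique.

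The substance of the proof is (iv). That $a\meet b$ is a lower bound of $\{a,b\}$ amounts to the identities $(a\meet b)\join a=a$ and $(a\meet b)\join b=b$, each of which is a direct instance of the absorption law $a\join(b\meet a)=a$ after a commutativity rearrangement. The delicate point --- and the one spot where, in the lattice case, associativity of $\meet$ would be used but is here unavailable --- is that $a\meet b$ is the \emph{greatest} lower bound: given $c$ with $c\unlhd a$ and $c\unlhd b$, we must show $c\unlhd a\meet b$. The idea is to use (ii) to rewrite the hypotheses as $c\join a=a$ and $c\join b=b$, and then invoke the part-preservation law $c=c\meet\bigl((c\join a)\meet(c\join b)\bigr)$, which upon substitution becomes $c=c\meet(a\meet b)$, i.e.\ $c\unlhd a\meet b$. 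The claim that $a\join b=\bigvee\{a,b\}$ is obtained by the order-dual of this argument, and the two together deliver the trellis structure. I expect the only real obstacle to be this greatest-lower-bound step: recognising that it is precisely the part-preservation axiom --- not transitivity or associativity --- that is doing the work, and being careful at each step about which instance of absorption or part-preservation is applied and where commutativity is needed to match the pattern.
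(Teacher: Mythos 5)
Your proof is correct and follows the standard argument for this result, which the paper only cites from Skala rather than proving: idempotency and the equivalence of $a\meet b=a$ with $a\join b=b$ follow from absorption and commutativity, and the greatest-lower-bound (resp.\ least-upper-bound) property is exactly the second (resp.\ first) half of the part-preservation identity after substituting $c\join a=a$ and $c\join b=b$. You correctly identify the one non-routine point, namely that part-preservation is what replaces the associativity argument used in the lattice case, so there is nothing to add.
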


One observes that the difference between the notions of a lattice and a trellis is that the operations $\meet$ and $\join$ are not required to be associative in the case of a trellis.

A trellis $\mathbb{T}=(X, \unlhd, \meet, \join)$ is called \textit{bounded} if it is bounded as a psoset.  The notation $\mathbb{T}=(X, \unlhd, \meet, \join,0,1)$ is used for a bounded trellis. Also, a trellis is called \textit{complete} if every 
subset has an infimum and a supremum.

\begin{definition} \cite{Gladstien1973}
Let $\mathbb{T}=(X,\unlhd,\meet,\join)$ be a trellis and $A \subseteq X$. Then
\begin{enumerate}[label=(\roman*)]
\item $A$ is called a \textit{$\meet$-sub-trellis} of~$\mathbb{T}$ if $x \meet y \in A$, for any $x,y \in A$;
\item $A$ is called a \textit{$\join$-sub-trellis} of~$\mathbb{T}$ if $x \join y \in A$, for any $x,y \in A$;
\item $A$ is called a \textit{sub-trellis} of~$\mathbb{T}$ if it is both a $\meet$-sub-trellis and a 
$\join$-sub-trellis of~$\mathbb{T}$;
\item $A$ is called a \textit{sub-lattice} of~$\mathbb{T}$ if it is a sub-trellis of~$\mathbb{T}$ and 
$\unlhd$ is transitive on~$A$.
\end{enumerate}
\end{definition}

\begin{theorem}{\rm \cite{Skala1972}}\label{Transitivity_equivalent_to_Meet_join_ass}
Let $\mathbb{T}=(X,\unlhd,\meet,\join)$ be a trellis. The following statements are equivalent:
\begin{enumerate}[label=(\roman*),font=\upshape]  
\item $\unlhd$ is transitive;
\item both $\meet$ and $\join$ are associative; 
\item one of $\meet$ and $\join$ is associative.
\end{enumerate}
\end{theorem}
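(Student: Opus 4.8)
The plan is to establish the cycle of implications $(i)\Rightarrow(ii)\Rightarrow(iii)\Rightarrow(i)$; since $(ii)\Rightarrow(iii)$ is trivial, the real work lies in $(i)\Rightarrow(ii)$ and $(iii)\Rightarrow(i)$. I would first record the elementary observation --- immediate from $\meet$ and $\join$ being the infimum and supremum of a $2$-element subset --- that for $a,b\in X$ the three conditions $a\unlhd b$, $\ a\meet b=a$, and $\ a\join b=b$ are mutually equivalent. This lets me freely translate $\unlhd$-statements into equations in $\meet$ or $\join$.

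For $(iii)\Rightarrow(i)$, assume $\meet$ is associative and let $a\unlhd b$ and $b\unlhd c$. Then $a\meet b=a$ and $b\meet c=b$, so
\[
a\meet c=(a\meet b)\meet c=a\meet(b\meet c)=a\meet b=a,
\]
which gives $a\unlhd c$; hence $\unlhd$ is transitive. If instead $\join$ is the associative operation, then from $a\join b=b$ and $b\join c=c$ the order-dual computation $a\join c=a\join(b\join c)=(a\join b)\join c=b\join c=c$ again yields $a\unlhd c$.

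For $(i)\Rightarrow(ii)$, I would show that transitivity forces $x\meet(y\meet z)$ to coincide with the infimum $\bigwedge\{x,y,z\}$. Indeed $x\meet(y\meet z)\unlhd x$ and $x\meet(y\meet z)\unlhd y\meet z$, and since $y\meet z\unlhd y$ and $y\meet z\unlhd z$, transitivity makes $x\meet(y\meet z)$ a lower bound of $\{x,y,z\}$. Conversely, any lower bound $w$ of $\{x,y,z\}$ satisfies $w\unlhd y$ and $w\unlhd z$, hence $w\unlhd y\meet z$, and together with $w\unlhd x$ this gives $w\unlhd x\meet(y\meet z)$. Thus $x\meet(y\meet z)=\bigwedge\{x,y,z\}$, and the symmetric computation yields $(x\meet y)\meet z=\bigwedge\{x,y,z\}$ as well; uniqueness of the infimum (antisymmetry of $\unlhd$) then gives associativity of $\meet$, and the order-dual argument, working with suprema, gives associativity of $\join$.

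The verifications are short and mechanical; the only genuinely load-bearing use of transitivity --- the step that breaks down on a proper trellis --- occurs in $(i)\Rightarrow(ii)$, when passing from $x\meet(y\meet z)\unlhd y\meet z$ to $x\meet(y\meet z)\unlhd y$ and $x\meet(y\meet z)\unlhd z$, which is what turns $x\meet(y\meet z)$ into a lower bound of $\{x,y,z\}$. I do not anticipate any other obstacle; even the direction $(iii)\Rightarrow(i)$, often the surprising one, collapses to a two-line associativity computation once $a\unlhd b$ has been rewritten as $a\meet b=a$.
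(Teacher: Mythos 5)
Your proof is correct. Note that the paper itself gives no proof of this statement --- it is quoted from Skala's \emph{Trellis theory} --- so there is nothing to compare against; your argument (translating $a\unlhd b$ into the equations $a\meet b=a$ and $a\join b=b$, deriving transitivity from a two-line associativity computation, and conversely identifying both $x\meet(y\meet z)$ and $(x\meet y)\meet z$ with $\bigwedge\{x,y,z\}$ under transitivity) is the standard one and each step checks out.
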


\begin{definition}\cite{Skala1972}\label{Modular_Trellis}
A trellis $\mathbb{T}=(X,\unlhd,\meet,\join)$ is said to be \textit{modular}, if for any $x,y,z \in X$,
it holds that $x \unlhd z$ implies $x \join (y \meet z)= (x \join y ) \meet z$.
\end{definition}

\begin{remark}\label{ModularNon3-cycles}
A modular trellis does not include any cycle of three elements. Indeed, suppose that a modular trellis $\mathbb{T}=(X,\unlhd,\meet,\join)$ includes a cycle of three distinct elements $\mathcal{C}=\{x_{1},x_{2},x_{3}\}$. This implies that $x_i \lesssim_\mathcal{C} x_j$, for any $i,j\in \{1,2,3\}$ with $i\neq j$. That means that exactly one of the following statements holds: $x_{1} \unlhd x_{2}\unlhd x_{3}\unlhd x_{1}$; $x_{1} \unlhd x_{3}\unlhd x_{2}\unlhd x_{1}$; $x_{2} \unlhd x_{1}\unlhd x_{3}\unlhd x_{2}$; $x_{2} \unlhd x_{3}\unlhd x_{1}\unlhd x_{2}$; $x_{3} \unlhd x_{1}\unlhd x_{2}\unlhd x_{3}$ or $x_{3} \unlhd x_{2}\unlhd x_{1}\unlhd x_{3}$. 
Any of these statements contradicts the modularity of $\mathbb{T}$. Suppose for instance that $x_{1} \unlhd x_{2}\unlhd x_{3}\unlhd x_{1}$. Since $x_{1} \unlhd x_{2}$, it holds by the modularity of $\mathbb{T}$ that $x_{1} \join (x_{3} \meet x_{2})= (x_{1} \join x_{3} ) \meet x_{2}$. Hence, $x_{2}=x_{1}$, a contradiction. 
\end{remark}

\subsection{Specific elements of a trellis}
In this subsection, we present some specific elements of a trellis that will play an important role in this paper.

\begin{definition}\cite{Skala1972}
Let $\mathbb{P}=(X,\unlhd)$ be a psoset. An element $a \in X$ is called:
\begin{enumerate}[label=(\roman*)]  
\item \textit{right-transitive}, if $a \unlhd x \unlhd y$ implies  $a \unlhd y$, for any $x,y \in X$; 
\item \textit{left-transitive}, if $x \unlhd y \unlhd a$ implies $x \unlhd a$, for any $x,y \in X$;
\item \textit{middle-transitive}, if $x \unlhd a \unlhd y$ implies $x \unlhd y$, for any $x,y \in X$;
\item \textit{transitive}, if it is right-, left- and middle-transitive.
\end{enumerate}
\end{definition}

\begin{definition}\cite{Skala1972}\label{assoc}
Let $\mathbb{T}=(X,\unlhd,\meet,\join)$ be a trellis. 
\begin{enumerate}[label=(\roman*)]  
\item A 3-tuple $(x,y,z) \in X^3$ is called  $\meet$-\textit{associative} (resp.\ $\join$-\textit{associative}) if $(x \meet y )\meet z = x\meet(y \meet z)$ (resp.\ $(x\join y)\join z = x\join(y \join z)$).
\item An element  $a \in X$ is called $\meet$-associative (resp.\ $\join$-associative) if any 3-tuple including $a$ is $\meet$-associative (resp.\ $\join$-associative). 
 \item An element $a\in X$ is called \textit{associative} if it is both $\meet$- and $\join$-associative. 
\end{enumerate}
\end{definition}

Note that due to the commutativity of $\meet$ and $\join$, it is sufficient to consider 3-tuples 
of the type $(a,x,y)$ in Definition~\ref{assoc}(ii).

For further use, we recall some results about transitive and associative elements. 

\begin{proposition}{\rm \cite{Skala1972}}\label{associative element is transitive}
Let $\mathbb{T}=(X,\unlhd,\meet,\join)$ be a trellis. Any $\meet$-associative or $\join$-associative element is transitive, but the converse does not hold in general.
\end{proposition}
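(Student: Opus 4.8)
The plan is to prove the two halves of the statement separately. For the positive direction, suppose $a \in X$ is $\meet$-associative (the $\join$-associative case is dual, via the order-reversing bijection exchanging $\meet$ and $\join$, and will be handled by a single remark). I would show $a$ is simultaneously right-, left- and middle-transitive, exploiting the characterization $u \unlhd v \iff u \meet v = u$ together with the fact that every $3$-tuple containing $a$ is $\meet$-associative.

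First I would establish right-transitivity. Assume $a \unlhd x$ and $x \unlhd y$, so $a \meet x = a$ and $x \meet y = x$. Then consider the $3$-tuple $(a,x,y)$: since $a$ is $\meet$-associative, $(a \meet x) \meet y = a \meet (x \meet y)$, i.e.\ $a \meet y = a \meet x = a$, hence $a \unlhd y$. For left-transitivity, assume $x \unlhd y$ and $y \unlhd a$, so $x \meet y = x$ and $y \meet a = y$, equivalently (by commutativity) $a \meet y = y$. Apply $\meet$-associativity to the $3$-tuple $(a,y,x)$: $(a \meet y) \meet x = a \meet (y \meet x)$, so $y \meet x = a \meet x$, i.e.\ $x = a \meet x$ (using $y \meet x = x \meet y = x$), hence $x \unlhd a$. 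For middle-transitivity, assume $x \unlhd a$ and $a \unlhd y$, so $x \meet a = x$ and $a \meet y = a$; apply $\meet$-associativity to $(a,x,y)$ — actually to $(x,a,y)$, but by commutativity this rearranges to a tuple containing $a$, which is $\meet$-associative: $(x \meet a) \meet y = x \meet (a \meet y)$, i.e.\ $x \meet y = x \meet a = x$, hence $x \unlhd y$. This shows $a$ is transitive. The $\join$-associative case follows by the order-dual argument, using $u \unlhd v \iff u \join v = v$.

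For the negative direction (``the converse does not hold in general''), I would exhibit a concrete finite trellis containing a transitive element that fails to be $\meet$-associative (and, ideally, also fails to be $\join$-associative). A natural candidate is a small proper trellis built around a $3$-cycle — for instance a trellis on a set like $\{0, a, b, c, 1\}$ where $\{a,b,c\}$ forms a cycle, $0$ is the bottom and $1$ the top — and to check that $1$ (or $0$) is transitive by inspection (a top element is always trivially left- and middle-transitive, and right-transitivity is vacuous since nothing lies strictly above it), while computing $(1 \meet x)\meet y$ versus $1 \meet (x \meet y)$ for a suitable pair $x,y$ drawn from the cycle to get a mismatch — the non-associativity of $\meet$ restricted to the cycle then propagates to a tuple involving $1$. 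One must make sure the chosen structure genuinely satisfies commutativity, absorption and part-preservation so that it is a bona fide trellis; the cleanest route is to draw the Hasse diagram, read off $\meet$ and $\join$ as infimum and supremum, and verify the trellis axioms.

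The main obstacle is the negative part: one has to produce an explicit small trellis and verify both that it is a trellis and that it contains the desired transitive-but-not-associative element, which is a matter of careful bookkeeping rather than deep insight. The positive part is routine once one commits to working exclusively through the algebraic identities $u \meet v = u$ and $u \join v = v$; the only subtlety is remembering to invoke commutativity to bring each relevant $3$-tuple into the form ``contains $a$'' before applying $\meet$- (or $\join$-)associativity.
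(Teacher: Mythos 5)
The paper does not prove this proposition at all --- it is quoted from Skala's monograph --- so there is no in-paper argument to compare against; your proposal has to stand on its own.

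The positive direction of your argument is correct and complete: encoding $u\unlhd v$ as $u\meet v=u$ and applying $\meet$-associativity of $a$ to the triples $(a,x,y)$, $(a,y,x)$ and $(x,a,y)$ does yield right-, left- and middle-transitivity, and the $\join$-dual works verbatim with $u\join v=v$.

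The negative direction, however, contains a genuine gap: the counterexample you sketch cannot work. In any bounded trellis the extremal elements are automatically $\meet$- and $\join$-associative: since $1\meet x=x$ for every $x$, both $(1\meet x)\meet y$ and $1\meet(x\meet y)$ equal $x\meet y$, and the permuted triples $(x,1,y)$, $(x,y,1)$ reduce the same way (dually for $0$ and for $\join$). So the ``mismatch'' you hope to extract from a triple involving $1$ never materialises, no matter how non-associative $\meet$ is on the cycle. Worse, in your specific candidate $\{0,a,b,c,1\}$ with $\{a,b,c\}$ a $3$-cycle, none of $a,b,c$ is even right-transitive (e.g.\ $a\unlhd b\unlhd c$ but $c\unlhd a$ forces $a\ntrianglelefteq c$ by antisymmetry), so the only transitive elements are $0$ and $1$ and the converse holds vacuously in that trellis. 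A correct counterexample needs a \emph{non-extremal} transitive element whose meet-translations destroy associativity. The trellis of Fig.~\ref{Fig04} in the paper supplies one: the element $e$ is transitive (check the three conditions directly; the only failure of transitivity in that trellis involves $a\unlhd c\unlhd d$ with $a\ntrianglelefteq d$, and $e$ never sits in such a configuration), yet $(a\meet d)\meet e=0\meet e=0$ while $a\meet(d\meet e)=a\meet c=a$, so the triple $(a,d,e)$ witnesses that $e$ is not $\meet$-associative.
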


\begin{theorem}{\rm \cite{Skala1972}}\label{Ass=tr}
Let $\mathbb{T}=(X,\unlhd,\meet,\join)$ be a trellis. If $\mathbb{T}$ is a modular trellis or a pseudo-chain, then any element is associative if and only if it is transitive. 
\end{theorem}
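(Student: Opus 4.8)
The plan is to get the ``only if'' direction for free and then reduce the ``if'' direction to a single sub-problem. The ``only if'' part holds in every trellis by Proposition~\ref{associative element is transitive}, so no hypothesis on $\mathbb{T}$ is needed. For the ``if'' direction I would first note that the statement is self-dual: passing to the dual trellis $(X,\unrhd,\join,\meet)$ carries modular trellises to modular trellises (the modular law is mapped to itself once one reverses $\unlhd$ and simultaneously swaps $\meet$ and $\join$, as a renaming of variables shows) and pseudo-chains to pseudo-chains, it preserves transitivity of an element, and it interchanges $\meet$-associativity with $\join$-associativity. By the remark following Definition~\ref{assoc} it therefore suffices to prove: \emph{if $\mathbb{T}$ is modular or a pseudo-chain and $a\in X$ is transitive, then $(a\meet x)\meet y=a\meet(x\meet y)$ for all $x,y\in X$.} The target in both cases is to show that $\bigwedge\{a,x,y\}$ exists and coincides with both bracketings: since any lower bound $\ell$ of $\{a,x,y\}$ satisfies $\ell\unlhd a\meet x$, $\ell\unlhd x\meet y$ and $\ell\unlhd y$, it is enough to verify that $(a\meet x)\meet y$ and $a\meet(x\meet y)$ are themselves lower bounds of $\{a,x,y\}$. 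Here $(a\meet x)\meet y\unlhd y$ is trivial and $(a\meet x)\meet y\unlhd a\meet x\unlhd a$ together with left-transitivity of $a$ gives $(a\meet x)\meet y\unlhd a$; the genuinely delicate relations are $(a\meet x)\meet y\unlhd x$, $a\meet(x\meet y)\unlhd x$ and $a\meet(x\meet y)\unlhd y$, and it is exactly these that can fail in a general trellis and so must be extracted from the hypothesis on $\mathbb{T}$.

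For a pseudo-chain I would proceed through the lemma that \emph{a transitive element $a$ of a pseudo-chain is $\unlhd$-comparable to every element of $X$}: if $a\lesssim b$ via $a\unlhd c_1\unlhd\cdots\unlhd c_n\unlhd b$, then right-transitivity of $a$ propagates $a\unlhd c_1$ to $a\unlhd c_2$ to $\cdots$ to $a\unlhd b$, and symmetrically $b\lesssim a$ yields $b\unlhd a$ by left-transitivity. With this in hand, a short case split computes $\bigwedge\{a,x,y\}$ explicitly and matches it to both bracketings. If $x\unlhd a$, then $a\meet x=x$, while $x\meet y\unlhd x\unlhd a$ together with left-transitivity of $a$ gives $a\meet(x\meet y)=x\meet y$, so both sides equal $x\meet y$. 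If $a\unlhd x$, then $a\meet x=a$; in the sub-case $a\unlhd y$ the element $a$ is a lower bound of $\{x,y\}$, so $a\unlhd x\meet y$ and both sides equal $a$; in the sub-case $y\unlhd a$ one has $a\meet y=y$, $a\meet(x\meet y)=x\meet y$ (left-transitivity again), and $y\unlhd a\unlhd x$ forces $y\unlhd x$ by middle-transitivity of $a$, so $x\meet y=y$ and both sides equal $y$.

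The modular case is the one I expect to cause the real trouble, because a transitive element of a modular trellis need not be comparable to the elements around it. Here I would try to wring the three delicate relations out of the modular identity $u\unlhd v\Rightarrow u\join(w\meet v)=(u\join w)\meet v$, crucially using that a modular trellis admits no three-element cycle (Remark~\ref{ModularNon3-cycles}): some of these relations become straightforward when the relevant elements happen to be comparable (e.g.\ if $a\unlhd x$ then $a\meet x=a$, so $(a\meet x)\meet y\unlhd a$, hence $(a\meet x)\meet y\unlhd x$ by middle-transitivity of $a$), so the work concentrates on the genuinely incomparable configurations, which one argues are controlled by modularity --- either by instantiating the identity at relations such as $(a\meet x)\meet y\unlhd a$ with $w$ ranging over $x$, $x\meet y$, and so on, to pin down the disputed meets, or, failing that, by a minimal-counterexample argument: assuming $(a\meet x)\meet y\neq a\meet(x\meet y)$ and deducing from the forced incomparabilities, together with the three transitivity properties of $a$ and Theorem~\ref{Transitivity_equivalent_to_Meet_join_ass}, a cycle of three elements, contradicting modularity. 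Settling this modular case --- and in particular confirming that the ``$\bigwedge\{a,x,y\}$'' route closes there rather than needing a more computational substitute --- is the heart of the argument.
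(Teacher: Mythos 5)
The paper offers no proof of this statement---it is quoted verbatim from Skala~\cite{Skala1972}---so your proposal can only be judged on its own terms, and on those terms it is about half a proof. The framing is sound: the ``only if'' direction is indeed immediate from Proposition~\ref{associative element is transitive}; the duality reduction to proving $\meet$-associativity alone is legitimate (the modular law, the pseudo-chain property and transitivity of an element are self-dual, while $\meet$- and $\join$-associativity are interchanged, and the remark after Definition~\ref{assoc} lets you restrict to tuples $(a,x,y)$); the ``common lower bound of $\{a,x,y\}$'' scheme is valid; and your pseudo-chain case is complete and correct (comparability of a transitive $a$ with every element is exactly Proposition~\ref{theoremoftransitiveelement}(i)(a) and (ii)(a), and the three-case computation checks out).

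The genuine gap is the modular case, which you yourself call ``the heart of the argument'' and then do not carry out. What you supply there is a menu of candidate strategies---instantiate the modular identity at various substitutions, or run a minimal-counterexample argument that manufactures a three-element cycle contradicting Remark~\ref{ModularNon3-cycles}---but no strategy is executed, and it is not established that the three delicate relations $(a\meet x)\meet y\unlhd x$, $a\meet(x\meet y)\unlhd x$ and $a\meet(x\meet y)\unlhd y$ actually follow from modularity together with transitivity of $a$. The most natural instantiations do not visibly produce them: writing $q=a\meet(x\meet y)$, the modular law $u\join(w\meet v)=(u\join w)\meet v$ applied with $u=q$, $v=a$, $w=x\meet y$ reduces to $q=q$, and with $u=q$, $v=x\meet y$, $w=y$ it yields only $x\meet y\unlhd q\join y$, not the desired $q\unlhd y$. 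So it is not even clear that the $\bigwedge\{a,x,y\}$ route closes in the modular setting rather than requiring a different computation. Until the modular half is settled by an actual derivation, you have proved the theorem only for pseudo-chains.
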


\begin{proposition}{\rm\cite{Skala1972}}\label{theoremoftransitiveelement}
Let $(X,\unlhd,\meet,\join)$ be a trellis and $a\in X$.  
\begin{enumerate}[label=(\roman*),font=\upshape] 
\item If $a$ is right-transitive, then
\begin{enumerate}
\item[(a)] $a \lesssim x$ implies $a \unlhd x$, for any $x\in X$;
\item[(b)] $a\unlhd x $ implies $a \join y \unlhd x \join y$, for any $x,y\in X$.
\end{enumerate}
\item If $a$ is left-transitive, then
\begin{enumerate}
\item[(a)] $x\lesssim a$ implies $x \unlhd a$, for any $x\in X$;
\item[(b)] $x\unlhd a$ implies $x\meet y\unlhd a \meet y$, for any $x,y\in X$.
\end{enumerate}
\item If $a$ is $\meet$-associative, then $x \unlhd y$ implies $ a \meet x \unlhd a \meet y$, for any $x,y\in X$.
\item If $a$ is $\join$-associative, then $x \unlhd y$ implies $ a \join x \unlhd a \join y$, for any $x,y\in X$.
\end{enumerate}
\end{proposition}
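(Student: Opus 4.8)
The plan is to treat the four items in turn, using two standard facts throughout: the characterisation $u \unlhd v \iff u \meet v = u \iff u \join v = v$, and the fact that $a \meet b = \bigwedge\{a,b\}$ is the \emph{greatest} lower bound of $\{a,b\}$ while $a \join b = \bigvee\{a,b\}$ is the \emph{least} upper bound, so that to prove $c \unlhd a \meet b$ it suffices to check $c \unlhd a$ and $c \unlhd b$, and dually for joins. For (i)(a), I would write $a \lesssim x$ as a chain $a \unlhd x_1 \unlhd \cdots \unlhd x_n \unlhd x$ and induct on $n$: right-transitivity of $a$ applied to $a \unlhd x_1 \unlhd x_2$ collapses the first two links, and iterating leaves $a \unlhd x$. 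Item (ii)(a) is the mirror image, collapsing the last two links $x_{n-1} \unlhd x_n \unlhd a$ by left-transitivity of $a$ and iterating down to $x \unlhd a$.

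For (i)(b), assume $a \unlhd x$; since $x \unlhd x \join y$, right-transitivity of $a$ gives $a \unlhd x \join y$, and together with $y \unlhd x \join y$ this says $x \join y$ is an upper bound of $\{a,y\}$, whence $a \join y \unlhd x \join y$ by minimality of the join. Item (ii)(b) is order-dual: from $x \unlhd a$ and $x \meet y \unlhd x$, left-transitivity of $a$ yields $x \meet y \unlhd a$, and with $x \meet y \unlhd y$ the element $x \meet y$ is a lower bound of $\{a,y\}$, hence $x \meet y \unlhd a \meet y$.

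For (iii), assume $x \unlhd y$, i.e.\ $x \meet y = x$. Since $a$ is $\meet$-associative and the $3$-tuple $(a,x,y)$ contains $a$, one may rebracket: $(a \meet x) \meet y = a \meet (x \meet y) = a \meet x$, which by the characterisation is precisely $a \meet x \unlhd y$; together with the trivial $a \meet x \unlhd a$ this makes $a \meet x$ a lower bound of $\{a,y\}$, so $a \meet x \unlhd a \meet y$. Item (iv) is dual: from $x \join y = y$ and the $\join$-associativity of the $3$-tuple $(a,y,x)$ one computes $x \join (a \join y) = a \join (y \join x) = a \join y$, i.e.\ $x \unlhd a \join y$, and with $a \unlhd a \join y$ this exhibits $a \join y$ as an upper bound of $\{a,x\}$, giving $a \join x \unlhd a \join y$. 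The step I expect to be the real obstacle is exactly this last pair: the tempting route $a \meet x \unlhd x \unlhd y \Rightarrow a \meet x \unlhd y$ is unavailable, since nothing guarantees that $x$ or $a \meet x$ enjoys any transitivity property, so one must spend the single usable instance of $\meet$-associativity of $a$ to obtain $a \meet x \unlhd y$ directly and then invoke the universal property of the meet, taking care to feed the associativity law the $3$-tuple that genuinely contains $a$.
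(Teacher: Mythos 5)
Your proof is correct in all four parts: the chain-collapsing induction for (i)(a)/(ii)(a), the upper/lower-bound arguments via the universal property of $\join$ and $\meet$ for (i)(b)/(ii)(b), and the single rebracketing $(a\meet x)\meet y=a\meet(x\meet y)=a\meet x$ (resp.\ its dual) to obtain $a\meet x\unlhd y$ in (iii)/(iv) all go through, and you rightly flag that the shortcut $a\meet x\unlhd x\unlhd y\Rightarrow a\meet x\unlhd y$ is unavailable without transitivity. The paper itself states this proposition as a recalled result from Skala's monograph and supplies no proof, so there is nothing to compare against; your argument is a valid self-contained justification using only the axioms given in the paper.
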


\begin{proposition}
Note that if a cycle $\mathcal{C}$ of a psoset $\mathbb{P}=(X,\unlhd)$ contains a right- or a left-transitive element, then it is a trivial cycle, {\em i.e.}, $\mathcal{C}$ is a singleton subset.
\end{proposition}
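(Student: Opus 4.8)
The plan is to isolate a small auxiliary fact --- \emph{a cycle possessing a smallest element, or a greatest element, is a singleton} --- and then deduce the statement from it. So suppose $\mathcal{C}$ is a cycle of $\mathbb{P}=(X,\unlhd)$ and $a\in\mathcal{C}$ is right-transitive. Since $\mathcal{C}$ is a cycle, $a\lesssim_{\mathcal{C}}x$ for every $x\in\mathcal{C}$; as the witnessing chain lies in $\mathcal{C}\subseteq X$, this already gives $a\lesssim x$, whence $a\unlhd x$ by Proposition~\ref{theoremoftransitiveelement}(i)(a) (which, although stated there for trellises, holds verbatim in any psoset --- one just walks along the chain repeatedly applying right-transitivity). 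Thus $a$ is the smallest element of $\mathcal{C}$. Dually, if $a\in\mathcal{C}$ is left-transitive, then Proposition~\ref{theoremoftransitiveelement}(ii)(a) makes $a$ the greatest element of $\mathcal{C}$.

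It then remains to prove the auxiliary fact; I would treat the ``smallest element'' case, the other being symmetric. Let $a$ be the smallest element of the cycle $\mathcal{C}$ and fix an arbitrary $x\in\mathcal{C}$. By the cycle property $x\lesssim_{\mathcal{C}}a$, so there is a chain $x\unlhd z_{1}\unlhd\cdots\unlhd z_{n}\unlhd a$ with all $z_{i}\in\mathcal{C}$. Since $a$ is the smallest element of $\mathcal{C}$, we have $a\unlhd z_{i}$ for each $i$; combined with $z_{n}\unlhd a$ and antisymmetry, this forces $z_{n}=a$, so the chain shortens to $x\unlhd z_{1}\unlhd\cdots\unlhd z_{n-1}\unlhd a$, and $z_{n-1}=a$ for the same reason. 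Iterating down the finite chain yields $z_{1}=\cdots=z_{n}=a$, hence $x\unlhd z_{1}=a$; together with $a\unlhd x$ this gives $x=a$. As $x$ was arbitrary, $\mathcal{C}=\{a\}$ is a singleton, as required.

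I do not anticipate a genuine obstacle here. The one point that must not be glossed over is that the intermediate elements of a $\lesssim_{\mathcal{C}}$-chain are, by definition, required to lie in $\mathcal{C}$: this is precisely what licenses applying ``$a$ is the smallest element of $\mathcal{C}$'' to each $z_{i}$ in the peeling step, and if one worked with the ambient preorder $\lesssim$ instead the argument would collapse. A purely cosmetic choice is whether to phrase the peeling as a downward finite induction on $n$ (clean) or to re-derive $a\unlhd z_{i}$ directly from $a\unlhd x$ by repeated right-transitivity along the chain (more self-contained but slightly more verbose).
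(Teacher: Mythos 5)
Your proof is correct and follows essentially the same route as the paper's: apply right-transitivity along the chain to conclude that the right-transitive element $a$ satisfies $a\unlhd z$ for every $z$ in the cycle, then peel back the return chain with antisymmetry to collapse everything onto $a$. Your packaging of the second half as a standalone lemma (``a cycle with a smallest element is a singleton'') is a purely organizational difference; the paper performs the same antisymmetry-based induction directly on the sequence $y_m, y_{m-1},\dots$, and your remark that Proposition~3.6(i)(a)-style reasoning carries over verbatim to psosets is exactly the point the paper implicitly relies on.
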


\begin{proof}
Let $x$ be a right-transitive element of $\mathcal{C}$ and $y\in \mathcal{C}$. We prove that $y=x$. Indeed, there exist two finite sequences $(x_1,..., x_n)$ and $(y_1,..., y_m)$ in $\mathcal{C}$ such that $x\unlhd x_{1} \unlhd x_{2}\unlhd \ldots \unlhd x_{n} \unlhd y \unlhd y_{1} \unlhd y_{2}\unlhd \ldots \unlhd y_{m} \unlhd x$.
From Proposition~\ref{theoremoftransitiveelement}(i) and the antisymmetry of $\unlhd$, it follows that $y_m=x$. Inductively, it follows that $y=x$. The proof is similar for a left-transitive element.
\end{proof}

 \begin{definition}\cite{Skala1972}
Let $\mathbb{T}=(X,\unlhd,\meet,\join)$ be a trellis. 
\begin{enumerate}[label=(\roman*)]  
\item A 3-tuple $(x,y,z) \in X^3$ is called  $\meet$-\textit{distributive} (resp.\ $\join$-\textit{distributive}) if $(x \meet y) \join z = (x\join z)\meet (y\join z)$ (resp.\ $(x \join y) \meet z = (x\meet z)\join (y\meet z)$);
\item An element  $a \in X$ is called $\meet$-distributive (resp.\ $\join$-distributive) if any 3-tuple including $a$ is $\meet$-distributive (resp.\ $\join$-distributive). 
\end{enumerate}
\end{definition}

Note that an element is $\meet$-distributive if and only if it is $\join$-distributive.

\begin{definition}\cite{Skala1972}\label{Distributive_Trellis}
Let $\mathbb{T}=(X,\unlhd,\meet,\join)$ be a trellis. 
\begin{enumerate}[label=(\roman*)]  
\item An element $a \in X$ is called distributive if it is $\meet$-distributive or $\join$-distributive;
\item A trellis $\mathbb{T}=(X,\unlhd,\meet,\join)$ is said to be \textit{distributive} if all its elements are distributive. 
\end{enumerate}
\end{definition}

The following results show that there are no proper distributive trellises. 

\begin{proposition}{\rm \cite{Fried1973b}}
Any distributive trellis is a lattice.
\end{proposition}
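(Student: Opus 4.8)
The plan is to deduce that in a distributive trellis the pseudo-order $\unlhd$ must be transitive; a trellis whose pseudo-order is transitive is a poset in which every two-element subset has an infimum and a supremum, hence a lattice (equivalently, one may invoke Theorem~\ref{Transitivity_equivalent_to_Meet_join_ass}: transitivity of $\unlhd$ is equivalent to associativity of $\meet$ and $\join$, and it is precisely associativity that distinguishes lattices from trellises).

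Before the main step I would record two easy facts. \emph{(a)} In a distributive trellis every $3$-tuple $(x,y,z)\in X^3$ is $\meet$-distributive: the element $x$ is distributive, hence $\meet$-distributive, and by definition every $3$-tuple containing $x$ is $\meet$-distributive, in particular $(x,y,z)$. \emph{(b)} For all $a,b\in X$ the conditions $a\meet b=a$ and $a\join b=b$ are equivalent, each being equivalent to $a\unlhd b$; indeed, if $a\meet b=a$ then $a\join b=(a\meet b)\join b=b$ by absorption, and symmetrically.

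For the main step, let $a,b,c\in X$ with $a\unlhd b$ and $b\unlhd c$; I claim $a\unlhd c$. By (b) we have $a\meet b=a$ and $b\join c=c$. Applying the $\meet$-distributive law (available by (a)) to $(a,b,c)$ gives $(a\meet b)\join c=(a\join c)\meet(b\join c)$. Substituting $a\meet b=a$ on the left and $b\join c=c$ on the right yields $a\join c=(a\join c)\meet c$, and $(a\join c)\meet c=c\meet(a\join c)=c$ by commutativity and absorption. Hence $a\join c=c$, i.e.\ $a\unlhd c$, so $\unlhd$ is transitive and $\mathbb{T}$ is a lattice.

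I do not anticipate a real obstacle: the argument reduces to a two-line computation. The only subtleties are ensuring the distributive identity is at hand for \emph{every} $3$-tuple rather than only for tuples built around a single distinguished element (handled by (a)), and dealing cleanly with the ``and/or'' in the definition of $\unlhd$ (handled by (b)). A symmetric computation using the $\join$-distributive law proves transitivity ``from the other side'' and could serve as a cross-check.
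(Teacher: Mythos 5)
Your proof is correct. The paper states this proposition as a cited result (Fried and Gr\"atzer) and gives no proof of its own, so there is nothing internal to compare against; your argument is a valid self-contained derivation. The two auxiliary facts are sound: (a) holds because every element of a distributive trellis is $\meet$-distributive (using the paper's remark that $\meet$- and $\join$-distributivity of an element are equivalent), so every $3$-tuple, containing some element, is $\meet$-distributive; (b) follows from absorption exactly as you compute, and the direction $a\unlhd b\Rightarrow a\meet b=a$ also follows directly from $a$ being the greatest lower bound of $\{a,b\}$. The main computation $(a\meet b)\join c=(a\join c)\meet(b\join c)$ specialised to $a\unlhd b\unlhd c$ correctly yields $a\join c=(a\join c)\meet c=c$, hence transitivity, and a transitive trellis is a reflexive, antisymmetric, transitive psoset with all binary meets and joins, i.e.\ a lattice (or, as you note, one may route through Theorem~\ref{Transitivity_equivalent_to_Meet_join_ass}). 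No gap.
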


\begin{proposition}{\rm \cite{Skala1972}}\label{DisElemts_is_AssElmts}
Any distributive element of a trellis is associative, 
but the converse does not hold in general.
\end{proposition}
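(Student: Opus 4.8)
The plan is to treat the two assertions in turn. For the implication, fix a trellis $\mathbb{T}=(X,\unlhd,\meet,\join)$ and a distributive element $a\in X$; recalling that an element is $\meet$-distributive if and only if it is $\join$-distributive, $a$ enjoys both. By the observation following Definition~\ref{assoc} it is enough to prove $a$ is $\meet$-associative, i.e.\ $(a\meet x)\meet y=a\meet(x\meet y)$ for all $x,y\in X$; the $\join$-associativity of $a$, hence its associativity, then follows from the order-dual statement (interchange $\meet$ and $\join$ and reverse $\unlhd$, under which being distributive is self-dual). The only tools I would use are commutativity, absorption, the relation $p\meet q=p\Leftrightarrow p\unlhd q\Leftrightarrow p\join q=q$, and the distributivity identities available for the $3$-tuples containing $a$.

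Write $L:=(a\meet x)\meet y$ and $R:=a\meet(x\meet y)$. The inequality $R\unlhd L$ is the easy half: $R\unlhd a$ is immediate, applying $\meet$-distributivity of $a$ to $(a,x\meet y,x)$ and absorption gives $R\join x=(a\join x)\meet\big((x\meet y)\join x\big)=(a\join x)\meet x=x$, so $R\unlhd x$, and symmetrically $R\unlhd y$; hence $R$ is a lower bound of $\{a\meet x,y\}$ and $R\unlhd L$.

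The reverse inequality $L\unlhd R$ is the heart of the matter. Here $L\unlhd y$ is trivial, and $\meet$-distributivity of $a$ on $(a\meet x,y,a)$ with absorption gives $L\join a=\big((a\meet x)\join a\big)\meet(y\join a)=a\meet(y\join a)=a$, so $L\unlhd a$. The sticking point --- which I expect to be the main obstacle --- is $L\unlhd x$, since the naive chain $L\unlhd a\meet x\unlhd x$ would require transitivity. My idea is to put $s:=L\join x$ and pin it down from two sides. First, rewriting $L=L\meet a$ (valid as $L\unlhd a$) and using $\meet$-distributivity of $a$ on $(L,a,x)$ gives $s=(L\join x)\meet(a\join x)=s\meet(a\join x)$, whence $s\unlhd a\join x$ and so $(a\join x)\meet s=s$. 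Second, $\join$-distributivity of $a$ on $(a,x,s)$ gives $(a\join x)\meet s=(a\meet s)\join(x\meet s)$, where $x\meet s=x$ by absorption and $a\meet s=(L\join x)\meet a=(L\meet a)\join(x\meet a)=L\join(a\meet x)=a\meet x$ (using $\join$-distributivity of $a$ on $(L,x,a)$, then $L\meet a=L$, then $L\unlhd a\meet x$), so $(a\join x)\meet s=(a\meet x)\join x=x$. Comparing the two evaluations of $(a\join x)\meet s$ forces $s=x$, i.e.\ $L\unlhd x$. Thus $L$ is a lower bound of $\{a,x,y\}$, hence of $\{a,x\meet y\}$, so $L\unlhd R$; combined with $R\unlhd L$ this yields $L=R$, and $a$ is $\meet$-associative. (In fact both $L$ and $R$ equal $\bigwedge\{a,x,y\}$.)

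For the non-reversibility it suffices to exhibit a trellis carrying an associative but non-distributive element. Since every lattice is a trellis in which every element is trivially associative, any non-distributive lattice will do; for instance, in the diamond $M_3=\{0,p,q,r,1\}$ the atom $p$ is associative yet not $\meet$-distributive, because $(q\meet r)\join p=p$ while $(q\join p)\meet(r\join p)=1$.
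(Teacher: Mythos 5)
Your argument is correct, and it is worth noting that the paper itself offers no proof here: Proposition~\ref{DisElemts_is_AssElmts} is simply recalled from Skala's monograph, so your derivation is a genuinely self-contained reconstruction rather than a variant of anything in the text. I checked the chain of identities and each application of distributivity is legitimate, since every $3$-tuple you invoke --- $(a,x\meet y,x)$, $(a\meet x,y,a)$, $(L,a,x)$, $(a,x,s)$, $(L,x,a)$ --- literally contains $a$ as a coordinate, which is exactly what the elementwise definition requires; the reductions $R\unlhd a\meet x$ from $R\unlhd a$ and $R\unlhd x$ use only the definition of the meet as a greatest lower bound and so do not smuggle in transitivity. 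The one step a reader might stumble over, establishing $L\unlhd x$ without the forbidden chain $L\unlhd a\meet x\unlhd x$, is handled cleanly by computing $(a\join x)\meet s$ in two ways and comparing; this is the genuine content of the proof and you have isolated it correctly. The reduction of $\join$-associativity to $\meet$-associativity by order duality is sound because a distributive element is both $\meet$- and $\join$-distributive (as the paper notes), and that conjunction is self-dual. The counterexample in $M_3$ is the standard one and settles the non-reversibility, since every element of a lattice is trivially associative. In short: the proposal is complete and correct, and supplies a proof the paper leaves to the literature.
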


\section{Specific subsets of a trellis}\label{Specific subsets of a trellis}
In this section, we introduce our notations for some interesting subsets of a trellis.

 \begin{notation}
Let $\mathbb{T}=(X,\unlhd,\meet,\join)$ be a trellis. We denote  by:
\begin{enumerate}[label=(\roman*)] 
\item $X^\rtr$:  the set of right-transitive elements of $X$;
\item $X^\ltr$:  the set of left-transitive elements of $X$;
\item $X^\tr$: the set of transitive elements of $X$;
\item $X^\meetAss$:  the set of $\meet$-associative elements of $X$;
\item $X^\joinAss$:  the set of $\join$-associative elements of $X$;
\item $X^\ass$: the set of associative elements of $X$;
\item $X^\dis$:  the set of distributive elements of $X$.
\end{enumerate}  
\end{notation}

It is easy to see that if $\mathbb{T}=(X,\unlhd,\meet,\join,0,1)$ is a bounded trellis, then $(X^\alpha,\unlhd,0,1)$ is a bounded psoset, for any $\alpha\in\{\dis,\ass,\meetAss,\joinAss,\tr,\ltr,\rtr\}$. 

The following proposition is an immediate consequence of Propositions~\ref{associative element is transitive} and~\ref{DisElemts_is_AssElmts}. 

\begin{proposition}\label{Specific_sets_inclusion}
Let $\mathbb{T}=(X,\unlhd,\meet,\join)$ be a trellis. The following chains of inclusions hold:
\begin{enumerate}[label=(\roman*),font=\upshape] 
\item $X^\dis \subseteq X^\ass\subseteq X^\meetAss\subseteq X^\tr\subseteq X^\rtr$;
\item $X^\dis \subseteq X^\ass\subseteq X^\joinAss\subseteq X^\tr\subseteq X^\ltr$.
\end{enumerate}
\end{proposition}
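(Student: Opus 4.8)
The plan is to decompose each of the two chains into its individual links and to justify every link either by unfolding a definition from Section~\ref{Basic_concepts} or by invoking one of the two propositions quoted just above the statement; no computation with $\meet$ or $\join$ is required. Since the two chains are mirror images of each other (swapping $\meet\leftrightarrow\join$ and $\rtr\leftrightarrow\ltr$), it suffices to describe the argument for chain~(i), the argument for chain~(ii) being identical after the obvious relabelling.

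For the leftmost inclusion $X^\dis\subseteq X^\ass$ I would simply cite Proposition~\ref{DisElemts_is_AssElmts}, which states that every distributive element of a trellis is associative. For $X^\ass\subseteq X^\meetAss$ I would unfold Definition~\ref{assoc}(iii): an element is associative precisely when it is both $\meet$-associative and $\join$-associative, hence any associative element lies in $X^\meetAss$ (and, for chain~(ii), in $X^\joinAss$). For the middle link $X^\meetAss\subseteq X^\tr$ I would apply Proposition~\ref{associative element is transitive}, according to which every $\meet$-associative (or $\join$-associative) element is transitive. Finally, $X^\tr\subseteq X^\rtr$ follows directly from the definition of a transitive element as one that is right-, left- and middle-transitive; in particular such an element is right-transitive (and, for chain~(ii), left-transitive). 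Concatenating these four inclusions yields chain~(i), and the mirror argument yields chain~(ii).

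There is no real obstacle here; the only point requiring a little care is bookkeeping — distinguishing the links that are mere definitional unfoldings (those involving $X^\ass$ and $X^\tr$) from the two that rest on genuine results of Skala (the links $X^\dis\subseteq X^\ass$ and $X^\meetAss,X^\joinAss\subseteq X^\tr$) — and keeping the $\meet$/$\join$ and $\rtr$/$\ltr$ sides properly paired. One may also remark in passing that the fact that these inclusions can be strict in general is already part of the statements of Propositions~\ref{associative element is transitive} and~\ref{DisElemts_is_AssElmts}, so nothing further needs to be shown.
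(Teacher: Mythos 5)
Your proof is correct and follows exactly the route the paper intends: the paper gives no explicit proof, stating only that the result is an immediate consequence of Propositions~\ref{associative element is transitive} and~\ref{DisElemts_is_AssElmts}, with the remaining links being definitional unfoldings just as you describe.
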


Recall that $X^\rtr$ and $X^\ltr$ do not include any non-trivial cycle. 

The following corollary is an immediate consequence of Proposition~\ref{Specific_sets_inclusion} and Theorem~\ref{Ass=tr}. 

\begin{corollary}\label{Eq_subsets_pseud-chain_and_modular}
Let $\mathbb{T}=(X,\unlhd,\meet,\join)$ be a trellis. If $\mathbb{T}$ is modular or a pseudo-chain, then the following chain of equalities holds:
\[X^\ass= X^\meetAss = X^\joinAss=X^\tr\,.\] 
\end{corollary}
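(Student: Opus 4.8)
The plan is to simply chain together the two results cited just before the corollary. Theorem~\ref{Ass=tr} tells us that whenever $\mathbb{T}$ is modular or a pseudo-chain, an element is associative precisely when it is transitive; restating this in terms of the notation of Section~\ref{Specific subsets of a trellis}, it says exactly $X^\ass = X^\tr$. The chains of inclusions in Proposition~\ref{Specific_sets_inclusion} then pin the two intermediate sets $X^\meetAss$ and $X^\joinAss$ between $X^\ass$ and $X^\tr$, forcing all four sets to coincide.

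Concretely, I would first invoke Proposition~\ref{Specific_sets_inclusion}(i) to record $X^\ass \subseteq X^\meetAss \subseteq X^\tr$, and Proposition~\ref{Specific_sets_inclusion}(ii) to record $X^\ass \subseteq X^\joinAss \subseteq X^\tr$. Next I would apply Theorem~\ref{Ass=tr} under the hypothesis that $\mathbb{T}$ is modular or a pseudo-chain to obtain $X^\tr \subseteq X^\ass$ (the nontrivial direction, since the reverse inclusion $X^\ass \subseteq X^\tr$ is already part of Proposition~\ref{associative element is transitive} / Proposition~\ref{Specific_sets_inclusion}). Combining, $X^\ass = X^\tr$, and sandwiching $X^\meetAss$ and $X^\joinAss$ between these two equal sets yields
\[
X^\ass = X^\meetAss = X^\joinAss = X^\tr\,,
\]
as claimed.

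There is essentially no obstacle here: the statement is a bookkeeping consequence of results already established, and the only thing to be careful about is correctly translating the elementwise statement of Theorem~\ref{Ass=tr} into the set-level equality $X^\ass = X^\tr$ and making sure the inclusions from Proposition~\ref{Specific_sets_inclusion} are quoted in the right direction. No case distinction between the modular case and the pseudo-chain case is needed, since Theorem~\ref{Ass=tr} already covers both uniformly.
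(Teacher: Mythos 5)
Your proof is correct and is exactly the paper's argument: the corollary is stated there as an immediate consequence of Proposition~\ref{Specific_sets_inclusion} (which sandwiches $X^\meetAss$ and $X^\joinAss$ between $X^\ass$ and $X^\tr$) and Theorem~\ref{Ass=tr} (which gives $X^\ass=X^\tr$ under modularity or the pseudo-chain hypothesis). Nothing further is needed.
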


The following proposition recalls some results about the
lattice structure of some of the above subsets.
 
\begin{proposition}{\rm\cite{Skala1972}}\label{X^tr=X^ass_X^dis_sublattice}
Let $\mathbb{T}=(X,\unlhd,\meet,\join)$ be a trellis. It holds that:
\begin{enumerate}[label=(\roman*),font=\upshape] 
\item The subset $X^\dis$ is a distributive sub-lattice;
\item If $\mathbb{T}$ is modular or a pseudo-chain, then the subsets $X^\tr=X^\ass$ are sub-lattices.
\end{enumerate}  
\end{proposition}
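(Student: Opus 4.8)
The plan is to prove the two items of Proposition~\ref{X^tr=X^ass_X^dis_sublattice} separately, in each case first showing closure under $\meet$ and $\join$ (so that the subset is a sub-trellis), and then invoking the definition of a sub-lattice, which additionally requires transitivity of $\unlhd$ on the subset.

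For item~(i), I would argue as follows. By Proposition~\ref{DisElemts_is_AssElmts} every distributive element is associative, hence by Proposition~\ref{associative element is transitive} it is transitive; so the restriction of $\unlhd$ to $X^\dis$ is transitive, and it only remains to verify that $X^\dis$ is a sub-trellis. Given $a,b \in X^\dis$, I need $a \meet b \in X^\dis$ and $a \join b \in X^\dis$, i.e.\ for any $z \in X$ the 3-tuples involving $a \meet b$ (resp.\ $a \join b$) are $\meet$- and $\join$-distributive. The natural route is to expand $((a \meet b) \meet z) \join w$ and similar expressions using the distributivity of $a$ and then of $b$ (each invoked twice), together with commutativity, absorption and part-preservation, and check that the two sides collapse to the same element. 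In fact the cleanest formulation is to note that $X^\dis$, being the set of distributive elements, is closed under the lattice operations because distributivity is an "identity-like" property that propagates through $\meet$ and $\join$; this is precisely the content attributed to~\cite{Skala1972}, so I would cite that computation rather than reproduce it, and simply remark that the resulting sub-lattice is distributive because every one of its elements is, by construction, a distributive element (and a sub-lattice all of whose elements are distributive is a distributive lattice in the ordinary sense).

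For item~(ii), assume $\mathbb{T}$ is modular or a pseudo-chain. By Corollary~\ref{Eq_subsets_pseud-chain_and_modular} we already have $X^\tr = X^\ass = X^\meetAss = X^\joinAss$, so it suffices to show that $X^\ass$ is a sub-lattice. Transitivity of $\unlhd$ on $X^\ass$ is immediate from Proposition~\ref{associative element is transitive}. For closure, take $a,b \in X^\ass$; I must show $a \meet b$ and $a \join b$ are again associative, equivalently (using the corollary again) again transitive, which by Theorem~\ref{Ass=tr} is all that is needed in this setting. So the crux reduces to: in a modular trellis or a pseudo-chain, the meet and join of two transitive elements are transitive. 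For the pseudo-chain case one can exploit Proposition~\ref{theoremoftransitiveelement}(i)(a) and (ii)(a): if $a,b$ are transitive then on the pseudo-chain $a \meet b$ equals whichever of $a,b$ is "lower" (its $\lesssim$-position), and a sub-element of the chain below two transitive elements inherits transitivity via the $\lesssim \Rightarrow \unlhd$ collapse. For the modular case one uses the modular law together with right/left transitivity of $a$ and $b$ and the monotonicity statements in Proposition~\ref{theoremoftransitiveelement}(i)(b), (ii)(b), (iii), (iv) to verify, for arbitrary $x \unlhd y \unlhd a \meet b$ (and the right-transitive and middle-transitive analogues), that $x \unlhd a \meet b$.

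The main obstacle I anticipate is the modular case of the closure argument in~(ii): unlike distributivity, which is essentially an equational property and therefore self-propagating, transitivity of an element is a quantified implication, and showing $a \meet b$ is transitive given that $a$ and $b$ are requires genuinely using the modular identity $x \unlhd z \Rightarrow x \join (y \meet z) = (x \join y) \meet z$ to "push" a chain $x \unlhd w \unlhd a \meet b$ through, while being careful that intermediate meets and joins stay comparable. I would organize this by treating the three directions (right-, left-, middle-transitivity of $a \meet b$) in turn, reducing each to a short modular-law computation combined with the monotonicity clauses of Proposition~\ref{theoremoftransitiveelement}; since the statement is attributed to Skala~\cite{Skala1972}, the proof in the paper will presumably just cite it, but the sketch above is how I would reconstruct it.
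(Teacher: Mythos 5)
The paper does not actually prove this proposition: it is stated as a recalled result and attributed wholesale to Skala~\cite{Skala1972}, exactly as you anticipated, so there is no internal argument to compare yours against. Your skeleton is the right one --- transitivity of $\unlhd$ on each subset is immediate from the element-wise transitivity supplied by Propositions~\ref{DisElemts_is_AssElmts} and~\ref{associative element is transitive}, and the entire content lies in closure under $\meet$ and $\join$ --- but be aware that in part~(i) your justification of closure (``distributivity is an identity-like property that propagates'') is not an argument; in ordinary lattice theory the meet of two one-sidedly distributive elements need not be distributive, and what saves the day here is that the paper's notion of distributive element is two-sided ($\meet$- iff $\join$-distributive), so the closure computation in Skala is genuinely needed and cannot be waved through. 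For part~(ii) you propose a direct modular-law verification that $a\meet b$ and $a\join b$ are transitive, which is the hard road; the paper's own toolbox gives a much shorter derivation: by Corollary~\ref{Eq_subsets_pseud-chain_and_modular} one has $X^\ass=X^\meetAss=X^\joinAss=X^\tr$, Proposition~\ref{left-_right-transitive_elements_are_meet-semi-trellis_join-semi-trellis}(iii)--(iv) then yields closure of this common set under both $\meet$ and $\join$ (so it is a sub-trellis), and since it is contained in $X^\rtr$ by Proposition~\ref{Specific_sets_inclusion}, Corollary~\ref{subtrellis_implies_sublattice}(iii) upgrades it to a sub-lattice with no modular-law computation at all. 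Your pseudo-chain reasoning via Proposition~\ref{theoremoftransitiveelement} is fine but likewise subsumed by this route. In short: no fatal error, but the two places where you defer to ``propagation'' or ``a short modular-law computation'' are precisely where the proof lives, and for~(ii) the paper's later results let you bypass that work entirely.
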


The following proposition provides some results about the trellis structure of some of the above subsets. The proof is an immediate application of Proposition~\ref{theoremoftransitiveelement}.

\begin{proposition}\label{left-_right-transitive_elements_are_meet-semi-trellis_join-semi-trellis}
Let $\mathbb{T}=(X,\unlhd,\meet,\join)$ be a trellis. It holds that:
\begin{enumerate}[label=(\roman*),font=\upshape] 
\item $(X^\ltr,\unlhd,\meet)$ is a $\meet$-sub-trellis of~$\mathbb{T}$;
\item $(X^\rtr,\unlhd,\join)$ is a $\join$-sub-trellis of~$\mathbb{T}$;
\item $(X^\meetAss,\unlhd,\meet)$ is a $\meet$-sub-trellis of~$\mathbb{T}$; 
\item $(X^\joinAss,\unlhd,\join)$ is a $\join$-sub-trellis of~$\mathbb{T}$.
\end{enumerate}
\end{proposition}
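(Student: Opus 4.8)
The plan is to verify each of the four closure properties directly, using Proposition~\ref{theoremoftransitiveelement}. Since $(X^\alpha,\unlhd,0,1)$ is already known to be a bounded psoset for each relevant $\alpha$, and $\meet$, $\join$ inherit commutativity and idempotency from $\mathbb{T}$, the only thing that requires proof is that the relevant operation does not leave the subset, and — crucially for the sub-trellis claim — that the value computed in $\mathbb{T}$ is still the infimum (resp.\ supremum) \emph{within} the subset. The latter is automatic: if $x,y\in A$ and $x\meet y\in A$, then $x\meet y$ is a lower bound of $\{x,y\}$ in $A$, and any lower bound of $\{x,y\}$ in $A$ is a fortiori a lower bound in $X$, hence $\unlhd\,x\meet y$; so $x\meet y$ remains the infimum in $A$. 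Thus the whole statement reduces to four closure assertions.

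For item (i), I would take $x,y\in X^\ltr$ and show $x\meet y$ is left-transitive: suppose $u\unlhd v\unlhd x\meet y$; then $v\unlhd x\meet y\unlhd x$ and $v\unlhd x\meet y\unlhd y$, so since $x$ and $y$ are left-transitive we get $u\unlhd x$ and $u\unlhd y$ from $u\unlhd v\unlhd x$ and $u\unlhd v\unlhd y$; wait — more carefully, left-transitivity of $x$ applied to $u\unlhd v\unlhd x$ needs $v\unlhd x$, which follows since $v\unlhd x\meet y\unlhd x$ is a two-step chain, but left-transitivity of $x$ gives exactly $v\unlhd x$ from $v\unlhd x\meet y\unlhd x$, and then again from $u\unlhd v\unlhd x$ we get $u\unlhd x$; similarly $u\unlhd y$; hence $u$ is a lower bound of $\{x,y\}$, so $u\unlhd x\meet y$. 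This shows $x\meet y\in X^\ltr$. Item (iii) is handled the same way but even more cheaply: by Proposition~\ref{theoremoftransitiveelement}(iii), a $\meet$-associative element $a$ satisfies $p\unlhd q\Rightarrow a\meet p\unlhd a\meet q$; combined with commutativity and a short manipulation of triples of the form $(x\meet y, u, v)$, one checks $\meet$-associativity of $x\meet y$ directly — alternatively, one notes $(X^\meetAss,\unlhd,\meet)$ closure is a known consequence that can also be derived by the same lower-bound argument once monotonicity of $a\meet(\cdot)$ is in hand. Items (ii) and (iv) are the order-dual statements, proved by the dual argument using Proposition~\ref{theoremoftransitiveelement}(i)(b) and (iv) respectively, with ``upper bound'' replacing ``lower bound''.

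The main obstacle — really the only subtle point — is making sure one uses the \emph{correct one-sided} transitivity so that the chains line up. Left-transitivity is exactly the notion that is stable under $\meet$ (a lower bound of two left-transitive elements inherits the property because the defining chains $u\unlhd v\unlhd a$ ``point down into'' $a$, and $a\meet$-type elements sit below both), whereas right-transitivity is \emph{not} expected to be $\meet$-stable and is instead $\join$-stable; getting this pairing backwards would break the argument, so I would be explicit about which hypothesis is invoked at each step. Once that is pinned down, every step is a one-line chase, and I would simply remark that (ii) and (iv) follow by duality rather than rewriting the symmetric computations.
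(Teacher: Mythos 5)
Your proof is correct and matches the paper's approach: the paper simply declares the result an immediate application of Proposition~\ref{theoremoftransitiveelement}, and your chase with the appropriate one-sided transitivity followed by the lower/upper-bound argument is exactly that verification. One cosmetic remark: for (iii) and (iv) the monotonicity from Proposition~\ref{theoremoftransitiveelement}(iii)--(iv) is not actually needed, since the purely equational rewriting $((x\meet y)\meet u)\meet v=(x\meet(y\meet u))\meet v=x\meet((y\meet u)\meet v)=x\meet(y\meet(u\meet v))=(x\meet y)\meet(u\meet v)$, using only the $\meet$-associativity of $x$ and $y$, already shows that every triple containing $x\meet y$ is $\meet$-associative.
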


\begin{proposition}\label{A_subtrellis_on_pseudo-chain}
Let $\mathbb{T}=(X,\unlhd,\meet,\join)$ be a trellis and $A$ a subset of $X$. If $\mathbb{T}$ is a pseudo-chain and $A\subseteq X^\rtr$ or $A\subseteq X^\ltr$, then $A$ is a sub-trellis of~$\mathbb{T}$.
\end{proposition}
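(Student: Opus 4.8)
The plan is to reduce everything to the observation that, under either hypothesis, any two elements of $A$ are comparable with respect to $\unlhd$; closure of $A$ under $\meet$ and $\join$ is then immediate.

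\textbf{Step 1: Comparability of elements of $A$.} Take $x,y\in A$. Since $\mathbb{T}$ is a pseudo-chain, we have $x\lesssim y$ or $y\lesssim x$. Suppose first $A\subseteq X^\rtr$. If $x\lesssim y$, then right-transitivity of $x$ together with Proposition~\ref{theoremoftransitiveelement}(i)(a) gives $x\unlhd y$; symmetrically, if $y\lesssim x$, then right-transitivity of $y$ gives $y\unlhd x$. In either case $x$ and $y$ are $\unlhd$-comparable. The case $A\subseteq X^\ltr$ is handled the same way via Proposition~\ref{theoremoftransitiveelement}(ii)(a): if $x\lesssim y$, then left-transitivity of $y$ yields $x\unlhd y$, and if $y\lesssim x$, then left-transitivity of $x$ yields $y\unlhd x$. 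So in all cases, for any $x,y\in A$, either $x\unlhd y$ or $y\unlhd x$.

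\textbf{Step 2: From comparability to closure.} Here I would use that in any trellis $x\unlhd y$ is equivalent both to $x\meet y=x$ and to $x\join y=y$: one implication is the characterizing condition of $\unlhd$, and the reverse implications follow from the absorption law (from $x\meet y=x$ one gets $x\join y=y\join(x\meet y)=y$, and similarly in the other direction). Hence, given $x,y\in A$ with, say, $x\unlhd y$, we obtain $x\meet y=x\in A$ and $x\join y=y\in A$. Thus $A$ is closed under both $\meet$ and $\join$, i.e., $A$ is a sub-trellis of~$\mathbb{T}$.

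\textbf{Main difficulty.} There is no substantial obstacle; the only points that need care are the bookkeeping of which of the two elements carries the right- (resp.\ left-) transitivity hypothesis in each branch of the pseudo-chain dichotomy, and the remark in Step~2 that comparability yields closure under \emph{both} operations — this relies on the equivalence $x\unlhd y\Leftrightarrow x\meet y=x\Leftrightarrow x\join y=y$, which holds in every trellis despite the "and/or" in the defining condition of $\unlhd$.
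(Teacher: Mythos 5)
Your proof is correct and follows essentially the same route as the paper's: invoke the pseudo-chain property to get $x\lesssim y$ or $y\lesssim x$, use right- (resp.\ left-) transitivity via Proposition~\ref{theoremoftransitiveelement} to upgrade this to $x\unlhd y$ or $y\unlhd x$, and conclude that $x\meet y$ and $x\join y$ lie in $\{x,y\}\subseteq A$. Your Step~2 merely makes explicit the equivalence $x\unlhd y\Leftrightarrow x\meet y=x\Leftrightarrow x\join y=y$, which the paper's proof uses without comment.
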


\begin{proof}
 Let $x,y\in A$. There exists a finite sequence $\left(a_{1}, \ldots, a_{n}\right)$ of elements in $X$ such that $x \unlhd a_{1} \unlhd \ldots \unlhd a_{n} \unlhd y $ or $y \unlhd a_{1} \unlhd \ldots \unlhd a_{n} \unlhd x$. The fact that $A\subseteq X^\rtr$ or $A\subseteq X^\ltr$  implies that $x \unlhd  y $ or $y \unlhd x$. Thus, $x \meet y= x \in A$ or $x \meet y= y \in A$. In a similar way, we obtain that $x \join y= y \in A$ or $x \join y= x \in A$. Thus, $A$ is a sub-trellis of~$\mathbb{T}$.
 \end{proof}

The following proposition shows that any 3-tuple of elements of a subset of $X^\rtr$ (resp.\ of $X^\ltr$) is $\join$-associative (resp.\ $\meet$-associative).

\begin{proposition}\label{join_meet_ass_in_A}
Let $\mathbb{T}=(X,\unlhd,\meet,\join)$ be a trellis and $A$ a subset of $X$. The following equalities hold:
\begin{enumerate}[label=(\roman*),font=\upshape]
\item If $A\subseteq X^\rtr$, then  $x \join (y \join z)  = (x \join y) \join z$, for any $x,y,z \in A$;
\item If $A\subseteq X^\ltr$, then  $x \meet (y \meet z)  = (x \meet y) \meet z$, for any $x,y,z \in A$.
\end{enumerate}
\end{proposition}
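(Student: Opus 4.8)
The plan is to derive both statements from a single order-dual argument, so it suffices to prove (i). Fix $x,y,z\in A\subseteq X^\rtr$ and write $p=x\join(y\join z)$ and $q=(x\join y)\join z$. The idea is that each of $p$ and $q$ is an upper bound of $\{x,y,z\}$, and moreover each is the smallest upper bound of one of the two ``halves'' of the other expression; this will force $p\unlhd q$ and $q\unlhd p$, and antisymmetry of $\unlhd$ then gives $p=q$.

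First I would check that $p$ is an upper bound of $\{x,y,z\}$. By definition of the join, $x\join(y\join z)$ is the supremum, hence an upper bound, of $\{x,\,y\join z\}$, so $x\unlhd p$ and $y\join z\unlhd p$; also $y\unlhd y\join z$ and $z\unlhd y\join z$. Therefore $y\lesssim p$ and $z\lesssim p$, and since $y$ and $z$ are right-transitive, Proposition~\ref{theoremoftransitiveelement}(i)(a) yields $y\unlhd p$ and $z\unlhd p$. The symmetric computation, now using right-transitivity of $x$ and of $y$, shows that $q$ is an upper bound of $\{x,y,z\}$ as well.

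Next I would close the loop using only the universal property of binary suprema. From $x\unlhd p$ and $y\unlhd p$ it follows that $p$ is an upper bound of $\{x,y\}$, so $x\join y\unlhd p$; together with $z\unlhd p$ this makes $p$ an upper bound of $\{x\join y,\,z\}$, whence $q=(x\join y)\join z\unlhd p$. Dually, from $y\unlhd q$ and $z\unlhd q$ we obtain $y\join z\unlhd q$, and with $x\unlhd q$ this gives $p=x\join(y\join z)\unlhd q$. Antisymmetry of $\unlhd$ then yields $p=q$, proving (i); statement (ii) follows by passing to the reverse pseudo-order, replacing $\join$ by $\meet$ and ``upper bound'' by ``lower bound'', and invoking Proposition~\ref{theoremoftransitiveelement}(ii)(a) in place of (i)(a).

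I expect the only real subtlety --- and the one place transitivity is genuinely used --- to be the step that collapses the length-two chains $y\unlhd y\join z\unlhd p$ (and $x\unlhd x\join y\unlhd q$) to the single relations $y\unlhd p$ (and $x\unlhd q$); this is exactly what right-transitivity of the relevant operands buys us, and it is precisely here that the statement would break down in an arbitrary trellis. All remaining steps are routine manipulations with smallest upper bounds, so I do not anticipate any further obstacle beyond bookkeeping of which operand must be right-transitive at each stage.
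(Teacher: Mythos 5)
Your proof is correct and follows essentially the same route as the paper's: use right-transitivity of the relevant operands to collapse the two-step chains (e.g.\ $y\unlhd y\join z\unlhd p$) and show that each of $x\join(y\join z)$ and $(x\join y)\join z$ is an upper bound of $\{x,y,z\}$, then apply the universal property of binary joins in both directions and conclude by antisymmetry. The only cosmetic difference is that you invoke Proposition~\ref{theoremoftransitiveelement}(i)(a) via $\lesssim$ where the paper applies the definition of right-transitivity directly.
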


\begin{proof}
We only give the proof of (i), the proof of (ii) being dual. Suppose that  $A\subseteq X^\rtr$, and let $x,y,z \in A$. We have $x \unlhd x \join y \unlhd (x \join y) \join z$, $y \unlhd x \join y \unlhd (x \join y) \join z$ and $ z \unlhd (x \join y) \join z$. Since $A\subseteq X^\rtr$, it follows that $x \unlhd  (x \join y) \join z$, $y  \unlhd (x \join y) \join z$ and $ z \unlhd (x \join y) \join z$. Moreover, $ y \join z \unlhd (x \join y) \join z$. Hence, $ x \join (y \join z) \unlhd (x \join y) \join z$. In a similar way, we obtain that  $(x \join y) \join z \unlhd x \join (y \join z) $. Thus, $  x \join (y \join z) = (x \join y) \join z$. 
\end{proof}

In view of Proposition~\ref{join_meet_ass_in_A}, one can deduce the following corollary. 
\begin{corollary}\label{subtrellis_implies_sublattice}
Let $\mathbb{T}=(X,\unlhd,\meet,\join)$ be a trellis and $A$ a subset of $X^\rtr$ or $X^\ltr$. 
\begin{enumerate}[label=(\roman*),font=\upshape]
\item If $A$ is a $\meet$-sub-trellis of~$\mathbb{T}$, then it is a $\meet$-sub-lattice of~$\mathbb{T}$;
\item If $A$ is a $\join$-sub-trellis of~$\mathbb{T}$, then it is a $\join$-sub-lattice of~$\mathbb{T}$;
\item If $A$ is a sub-trellis of~$\mathbb{T}$, then it is a sub-lattice of~$\mathbb{T}$. 
\end{enumerate}
\end{corollary}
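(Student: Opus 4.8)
The plan is to derive Corollary~\ref{subtrellis_implies_sublattice} directly from the definition of a sub-lattice together with Proposition~\ref{join_meet_ass_in_A} and Theorem~\ref{Transitivity_equivalent_to_Meet_join_ass}. Recall that a sub-trellis $A$ of $\mathbb{T}$ is a sub-lattice precisely when $\unlhd$ is transitive on $A$; and by Theorem~\ref{Transitivity_equivalent_to_Meet_join_ass}, transitivity of $\unlhd$ on $A$ is equivalent to the associativity of \emph{one} of the operations $\meet$, $\join$ on $A$. So the whole corollary reduces to checking that the relevant operation is associative on $A$, which is exactly what Proposition~\ref{join_meet_ass_in_A} supplies.

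First I would handle (ii). Assume $A \subseteq X^\rtr$ (the case $A \subseteq X^\ltr$ will be dual) and suppose $A$ is a $\join$-sub-trellis, so that $x \join y \in A$ for all $x,y \in A$; thus $(A,\unlhd,\join)$ is itself a $\join$-semi-trellis. By Proposition~\ref{join_meet_ass_in_A}(i), $\join$ is associative on $A$. Now one needs a small observation: $(A,\unlhd,\join)$ viewed on its own carries enough structure to invoke Theorem~\ref{Transitivity_equivalent_to_Meet_join_ass} — or, more directly, associativity of $\join$ on $A$ forces $\unlhd$ restricted to $A$ to be transitive, since if $p \unlhd q$ and $q \unlhd r$ in $A$ then $p \join q = q$ and $q \join r = r$, whence $p \join r = p \join (q \join r) = (p \join q) \join r = q \join r = r$, i.e. $p \unlhd r$. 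Hence $A$ with the induced order is a lattice under $\join$ and its associated meet; being a $\join$-sub-trellis with transitive induced order, it is a $\join$-sub-lattice of $\mathbb{T}$. The argument for (i) is the mirror image using Proposition~\ref{join_meet_ass_in_A}(ii) and the fact that $A \subseteq X^\ltr$ makes $\meet$ associative on $A$.

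For (iii), if $A$ is a sub-trellis of~$\mathbb{T}$ then it is in particular both a $\meet$-sub-trellis and a $\join$-sub-trellis, so by parts (i) and (ii) the induced order $\unlhd$ on $A$ is transitive; combined with $A$ being a sub-trellis, this is exactly the definition of a sub-lattice of~$\mathbb{T}$. I expect the only delicate point to be the bridge between ``associativity of $\meet$ (or $\join$) on the subset $A$'' and ``transitivity of $\unlhd$ on $A$'': Theorem~\ref{Transitivity_equivalent_to_Meet_join_ass} is stated for a trellis, and $A$ need not a priori be closed under \emph{both} operations in cases (i) and (ii). The clean way around this is the elementary three-line computation above, which uses only the characterization $p \unlhd r \iff p \join r = r$ (resp.\ $p \meet r = p$) and the single associativity identity at hand, and therefore does not require $A$ to be a full sub-trellis. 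That makes parts (i) and (ii) genuinely stronger than a naive reading, and part (iii) then follows for free.
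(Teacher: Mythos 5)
Your proof is correct and follows the route the paper intends: it deduces transitivity of $\unlhd$ on $A$ from the associativity identity supplied by Proposition~\ref{join_meet_ass_in_A}, via the computation $p \join r = p\join(q\join r) = (p\join q)\join r = r$ (resp.\ its $\meet$-counterpart), and you rightly observe that this needs no closure of $A$ under the operation involved, which is exactly why parts (i) and (ii) go through separately. Two cosmetic remarks only: which part of Proposition~\ref{join_meet_ass_in_A} you invoke is dictated by whether $A\subseteq X^\rtr$ or $A\subseteq X^\ltr$, not by which operation $A$ is closed under (your pairing of (i) with $X^\ltr$ and (ii) with $X^\rtr$ should be stated as covering all four combinations, which your final paragraph implicitly does), and a $\join$-sub-trellis with transitive induced order is a join-semilattice rather than a ``lattice'' as you briefly assert --- neither point affects the argument.
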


\begin{corollary}\label{Specifics_subtrellis_implies_sublattice}
Let $\mathbb{T}=(X,\unlhd,\meet,\join)$ be a trellis and $\alpha\in\{\ass,\meetAss,\joinAss,\tr,\ltr,\rtr\}$. If $X^\alpha$ is a sub-trellis of~$\mathbb{T}$, then it is a sub-lattice of~$\mathbb{T}$. 
\end{corollary}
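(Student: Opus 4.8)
The plan is to reduce the statement to Corollary~\ref{subtrellis_implies_sublattice}(iii), which already covers the case of a sub-trellis contained in $X^\rtr$ or in $X^\ltr$. Thus the only thing I would actually verify is that, for each admissible value of $\alpha$, the set $X^\alpha$ sits inside $X^\rtr$ or inside $X^\ltr$; this follows at once from the chains of inclusions in Proposition~\ref{Specific_sets_inclusion}.

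Concretely, I would first dispose of the two extreme cases: if $\alpha=\rtr$ then trivially $X^\alpha=X^\rtr\subseteq X^\rtr$, and if $\alpha=\ltr$ then trivially $X^\alpha=X^\ltr\subseteq X^\ltr$. For the remaining four values, Proposition~\ref{Specific_sets_inclusion}(i) gives $X^\ass\subseteq X^\meetAss\subseteq X^\tr\subseteq X^\rtr$ and Proposition~\ref{Specific_sets_inclusion}(ii) gives $X^\ass\subseteq X^\joinAss\subseteq X^\tr\subseteq X^\ltr$; in particular $X^\tr\subseteq X^\rtr\cap X^\ltr$, so each of $X^\ass$, $X^\meetAss$, $X^\joinAss$, $X^\tr$ is a subset of $X^\rtr$ (and also of $X^\ltr$). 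Hence in every case $X^\alpha$ is a subset of $X^\rtr$ or of $X^\ltr$.

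Now I would invoke the hypothesis that $X^\alpha$ is a sub-trellis of~$\mathbb{T}$ and apply Corollary~\ref{subtrellis_implies_sublattice}(iii) with $A:=X^\alpha$ --- legitimate precisely because $A\subseteq X^\rtr$ or $A\subseteq X^\ltr$ --- to conclude that $X^\alpha$ is a sub-lattice of~$\mathbb{T}$, which is the claim.

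I do not expect any genuine obstacle here: the whole content is carried by Corollary~\ref{subtrellis_implies_sublattice}, itself a consequence of the associativity result Proposition~\ref{join_meet_ass_in_A} (via the fact that $\unlhd$ restricted to a sub-trellis of $X^\rtr$ or $X^\ltr$ becomes transitive, hence the sub-trellis is a sub-lattice). The only point requiring a moment's bookkeeping is routing each of the six values of $\alpha$ to the correct side, $X^\rtr$ versus $X^\ltr$, and the observation $X^\tr\subseteq X^\rtr\cap X^\ltr$ makes even that transparent.
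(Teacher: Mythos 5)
Your proposal is correct and follows exactly the route the paper intends: Proposition~\ref{Specific_sets_inclusion} places each $X^\alpha$ inside $X^\rtr$ or $X^\ltr$, and Corollary~\ref{subtrellis_implies_sublattice}(iii) then upgrades the sub-trellis to a sub-lattice. The case bookkeeping (routing $\meetAss$ via $X^\rtr$, $\joinAss$ via $X^\ltr$, and noting $X^\ass, X^\tr\subseteq X^\rtr\cap X^\ltr$) is exactly what the paper leaves implicit.
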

Combining Theorem~\ref{Transitivity_equivalent_to_Meet_join_ass} and Proposition~\ref{join_meet_ass_in_A} leads to the following result.
\begin{proposition}\label{conditions subsets of lattices}
Let $\mathbb{T}=(X,\unlhd,\meet,\join)$ be a trellis. The following statements are equivalent: 
\begin{enumerate}[label=(\roman*),font=\upshape]
\item $\mathbb{T}=(X,\unlhd,\meet,\join)$ is a lattice;
\item $X^\rtr =X$;
\item $X^\ltr =X$;
\item $X^\meetAss =X$; 
\item $X^\joinAss =X$.
\end{enumerate}
\end{proposition}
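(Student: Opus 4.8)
The plan is to show that (ii)--(v) are each equivalent to (i), organised as one combined forward implication (i) $\Rightarrow$ (ii), (iii), (iv), (v), followed by four separate converses. Every step will be anchored at Theorem~\ref{Transitivity_equivalent_to_Meet_join_ass}, which lets me freely switch between ``$\mathbb{T}$ is a lattice'', ``$\unlhd$ is transitive'', ``$\meet$ is associative'' and ``$\join$ is associative''.

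For the forward direction, suppose $\mathbb{T}$ is a lattice. Then $\unlhd$ is transitive, so every element is right-, left- (indeed middle-) transitive, giving $X^\rtr = X^\ltr = X$ at once. By Theorem~\ref{Transitivity_equivalent_to_Meet_join_ass} both $\meet$ and $\join$ are then associative, so \emph{every} $3$-tuple is $\meet$-associative and $\join$-associative; in particular every $3$-tuple containing a fixed element $a$ has this property, so $a \in X^\meetAss \cap X^\joinAss$. Hence $X^\meetAss = X^\joinAss = X$, which settles (i) $\Rightarrow$ (ii)/(iii)/(iv)/(v) in one stroke.

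For the converses of (iv) and (v): if $X^\meetAss = X$, then for arbitrary $x,y,z \in X$ the $3$-tuple $(x,y,z)$ contains $x \in X^\meetAss$, hence is $\meet$-associative, i.e.\ $(x \meet y) \meet z = x \meet (y \meet z)$; as $x,y,z$ were arbitrary, $\meet$ is associative and Theorem~\ref{Transitivity_equivalent_to_Meet_join_ass} gives that $\mathbb{T}$ is a lattice. The converse of (v) is the dual argument with $\join$. For the converses of (ii) and (iii): from $X^\rtr = X$ we have $X \subseteq X^\rtr$, so Proposition~\ref{join_meet_ass_in_A}(i) applied with $A = X$ yields $x \join (y \join z) = (x \join y) \join z$ for all $x,y,z \in X$, i.e.\ $\join$ is associative, and Theorem~\ref{Transitivity_equivalent_to_Meet_join_ass} again gives (i); dually, $X^\ltr = X$ together with Proposition~\ref{join_meet_ass_in_A}(ii) (with $A = X$) makes $\meet$ associative. (Alternatively, (ii) $\Rightarrow$ (i) can be argued bare-handed: if every element is right-transitive, then $a \unlhd b \unlhd c$ forces $a \unlhd c$ by right-transitivity of $a$, so $\unlhd$ is transitive.)

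There is no real obstacle here; the only point needing more than unwinding definitions is the passage from ``all elements are right-transitive (resp.\ left-transitive)'' to ``$\join$ (resp.\ $\meet$) is associative'', and this is exactly Proposition~\ref{join_meet_ass_in_A} specialised to $A = X$. I would make that specialisation explicit and otherwise present the argument as a short chain of implications.
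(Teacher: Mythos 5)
Your proposal is correct and follows exactly the route the paper indicates: it combines Theorem~\ref{Transitivity_equivalent_to_Meet_join_ass} with Proposition~\ref{join_meet_ass_in_A} specialised to $A=X$ (plus the immediate observation that $X^\meetAss=X$ or $X^\joinAss=X$ makes $\meet$ or $\join$ associative). You have merely written out the details that the paper leaves implicit.
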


\begin{theorem}{\rm \cite{Skala1972}}\label{Closeness_of_join_for_rtr+joinass}
Let $\mathbb{T}=(X,\unlhd,\meet,\join)$ be a trellis.  
\begin{enumerate}[label=(\roman*),font=\upshape] 
\item If $x_{1},\ldots,x_{k}$ are right-transitive, then the join of $\left\{x_{1},\ldots,x_{k}\right\}$ exists and equals $x_{i_1} \join \ldots \join x_{i_{k}}$ for any permutation $i_{1}, \ldots, i_{k}$ of $1, \ldots, k$. Moreover, $\bigvee \left\{x_{1}, \ldots, x_{k}\right\}$ is right-transitive. 
\item If $x_{1},\ldots,x_{k}$ are left-transitive, then the meet of $\left\{x_{1},\ldots,x_{k}\right\}$ exists and equals $x_{i_1} \meet \ldots \meet x_{i_{k}}$ for any permutation $i_{1}, \ldots, i_{k}$ of $1, \ldots, k$. Moreover, $\bigwedge \left\{x_{1}, \ldots, x_{k}\right\}$ is left-transitive. 
\item If $x_{1}, \ldots, x_{k}$ are $\meet$-associative, then  the meet of $\left\{x_{1}, \ldots, x_{k}\right\}$ exists and equals $x_{i_1} \meet \ldots \meet x_{i_{k}}$ for any permutation $i_{1}, \ldots, i_{k}$ of $1, \ldots, k$. Moreover, $\bigwedge \left\{x_{1}, \ldots, x_{k}\right\}$ is $\meet$-associative. 
\item If $x_{1}, \ldots, x_{k}$ are $\join$-associative, then  the join of $\left\{x_{1}, \ldots, x_{k}\right\}$ exists and equals $x_{i_1} \join \ldots \join x_{i_{k}}$ for any permutation $i_{1}, \ldots, i_{k}$ of $1, \ldots, k$. Moreover, $\bigvee \left\{x_{1}, \ldots, x_{k}\right\}$ is $\join$-associative. 
\end{enumerate}  
\end{theorem}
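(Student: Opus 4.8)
The plan is to prove all four statements by induction on $k$, establishing (i) in detail and deriving (ii) by order-duality (interchange $\meet$ and $\join$, reverse $\unlhd$, and swap the roles of left- and right-transitivity); statements (iii) and (iv) are then reduced to (ii) and (i), respectively, via Proposition~\ref{associative element is transitive}.

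For (i), the cases $k=1$ (trivial) and $k=2$ (commutativity of $\join$ together with Proposition~\ref{left-_right-transitive_elements_are_meet-semi-trellis_join-semi-trellis}(ii)) are immediate. In the inductive step I would first record a \emph{closure fact}: by Proposition~\ref{left-_right-transitive_elements_are_meet-semi-trellis_join-semi-trellis}(ii) the join of two right-transitive elements is right-transitive, so iterating, the left-associated product $p_m := (\cdots((x_1 \join x_2) \join x_3)\cdots \join x_m)$ is right-transitive for every $m \le k$. Assuming inductively that $p_{k-1} = \bigvee\{x_1,\ldots,x_{k-1}\}$, I would show $p_k = p_{k-1}\join x_k = \bigvee\{x_1,\ldots,x_k\}$. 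The one place the hypothesis is genuinely used is to see that $p_k$ is an \emph{upper bound} of $\{x_1,\ldots,x_k\}$: from $x_i \unlhd p_{k-1} \unlhd p_k$ one only gets $x_i \lesssim p_k$, but since $x_i$ is right-transitive, Proposition~\ref{theoremoftransitiveelement}(i)(a) upgrades this to $x_i \unlhd p_k$ for $i<k$, while $x_k \unlhd p_{k-1}\join x_k = p_k$ directly. That $p_k$ is the \emph{least} upper bound is purely order-theoretic: any upper bound $u$ of $\{x_1,\ldots,x_k\}$ bounds $\{x_1,\ldots,x_{k-1}\}$, so $p_{k-1}\unlhd u$, and also $x_k\unlhd u$, hence $p_k = p_{k-1}\join x_k\unlhd u$. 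Since $\bigvee\{x_1,\ldots,x_k\}$ depends only on the underlying set, its value is unchanged under any permutation $i_1,\ldots,i_k$; and to see that \emph{every} bracketing of $x_{i_1}\join\cdots\join x_{i_k}$ produces this same element, I would invoke Proposition~\ref{join_meet_ass_in_A}(i) (the binary associative law holds among right-transitive elements) together with the closure fact, so that the standard general-associativity argument applies and every bracketing collapses to the left-associated one.

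Statement (ii) is the order-dual of (i): replace $\join$ by $\meet$, $\unlhd$ by its converse and ``right-transitive'' by ``left-transitive'', using Proposition~\ref{left-_right-transitive_elements_are_meet-semi-trellis_join-semi-trellis}(i), Proposition~\ref{theoremoftransitiveelement}(ii)(a) and Proposition~\ref{join_meet_ass_in_A}(ii) in place of their counterparts. For (iii), a $\meet$-associative element is transitive (Proposition~\ref{associative element is transitive}), hence left-transitive, so statement (ii) already shows that $\bigwedge\{x_1,\ldots,x_k\}$ exists, equals any iterated meet $x_{i_1}\meet\cdots\meet x_{i_k}$, and is independent of permutation and bracketing; it remains only to show this meet is $\meet$-associative, which follows by the same closure-and-induction pattern, now using Proposition~\ref{left-_right-transitive_elements_are_meet-semi-trellis_join-semi-trellis}(iii) to conclude that $p_k \in X^\meetAss$. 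Statement (iv) follows from (iii) by the same $\meet/\join$ duality, using Proposition~\ref{left-_right-transitive_elements_are_meet-semi-trellis_join-semi-trellis}(iv).

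I expect the main obstacle to be bookkeeping rather than any deep idea: one must keep the two ingredients --- closure of $X^\rtr$ under $\join$ and binary associativity within $X^\rtr$ --- simultaneously in play so that the iterated joins never leave $X^\rtr$, and one must resist chaining $x_i\unlhd p_{k-1}\unlhd p_k$ transitively, routing that single step instead through Proposition~\ref{theoremoftransitiveelement}(i)(a), which is precisely the content that fails once the transitivity hypothesis on the $x_i$ is dropped.
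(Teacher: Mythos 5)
The paper itself offers no proof of this theorem: it is quoted verbatim from Skala's monograph (the \cite{Skala1972} tag), so there is nothing internal to compare your argument against. Judged on its own, your reconstruction is correct and, importantly, non-circular: every ingredient you invoke --- Proposition~\ref{theoremoftransitiveelement}, Proposition~\ref{associative element is transitive}, Proposition~\ref{left-_right-transitive_elements_are_meet-semi-trellis_join-semi-trellis} and Proposition~\ref{join_meet_ass_in_A} --- is stated and proved (or independently cited) before Theorem~\ref{Closeness_of_join_for_rtr+joinass} in the paper, and none of their proofs relies on it. The inductive step is sound: the only delicate point is exactly the one you isolate, namely that $x_i \unlhd p_{k-1} \unlhd p_k$ must be closed up using the right-transitivity of $x_i$ itself (in fact the definition of right-transitivity applies directly here, so the detour through $\lesssim$ and Proposition~\ref{theoremoftransitiveelement}(i)(a) is harmless but unnecessary), while the least-upper-bound half and the permutation invariance are purely order-theoretic. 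The combination of closure of $X^\rtr$ under $\join$ with the ternary associativity of Proposition~\ref{join_meet_ass_in_A}(i) does yield general associativity of arbitrary bracketings by the standard induction, which settles the ``any permutation/bracketing'' clause. Your reductions of (ii) by order-duality and of (iii)--(iv) to (ii)--(i) via $X^\meetAss \subseteq X^\ltr$ and $X^\joinAss \subseteq X^\rtr$, with the final membership $\bigwedge\{x_1,\ldots,x_k\} \in X^\meetAss$ supplied by Proposition~\ref{left-_right-transitive_elements_are_meet-semi-trellis_join-semi-trellis}(iii), are all legitimate. No gaps.
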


The following result generalizes the above Theorem~\ref{Closeness_of_join_for_rtr+joinass}. The proof is straightforward.
\begin{proposition}\label{Closeness_of_join_for_subtrellis}
Let $\mathbb{T}=(X,\unlhd,\meet,\join)$ be a trellis and $A$ a subset of $X$. 
\begin{enumerate}[label=(\roman*),font=\upshape] 
\item If $A$ is a subset of $X^\rtr$, then for any finite subset $\{x_1, \ldots, x_k\}\subseteq A$, it holds that $\bigvee \{x_1, \ldots, x_k\}$ exists and equals $x_{i_1} \join \ldots \join x_{i_{k}}$ for any permutation $i_1, \ldots, i_k$ of $1, \ldots, k$. Moreover, if $A$ is $\join$-sub-trellis of~$\mathbb{T}$, then $\bigvee\{x_1, \ldots, x_k\}\in A$.
\item If $A$ is a subset of $X^\ltr$, then for any finite subset $\{x_1, \ldots, x_k\}\subseteq A$, it holds that $\bigwedge \{x_1, \ldots, x_k\}$ exists and equals $x_{i_1} \meet \ldots \meet x_{i_{k}}$ for any permutation $i_1, \ldots, i_k$ of $1, \ldots, k$. Moreover, if $A$ is $\meet$-sub-trellis of~$\mathbb{T}$, then $\bigwedge\{x_1, \ldots, x_k\}\in A$.
\end{enumerate}  
\end{proposition}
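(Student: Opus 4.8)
\emph{Proof proposal.} The plan is to split each item into two claims: that $\bigvee\{x_1,\ldots,x_k\}$ exists and coincides with any bracketed join $x_{i_1}\join\cdots\join x_{i_k}$ of a permutation of the $x_j$, and that this element lies in $A$ once $A$ is additionally assumed to be closed under $\join$. For the first claim I would simply invoke Theorem~\ref{Closeness_of_join_for_rtr+joinass}(i): since $A\subseteq X^\rtr$, every $x_j$ is right-transitive, so that theorem (together with Proposition~\ref{join_meet_ass_in_A}(i), which makes the bracketing of joins of elements of $A$ immaterial) delivers the existence and the stated value verbatim. This reduces matters to the genuinely new ingredient, namely the membership $\bigvee\{x_1,\ldots,x_k\}\in A$ under the hypothesis that $A$ is a $\join$-sub-trellis.

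For that part I would argue by induction on $k$, the cases $k=1$ and $k=2$ being trivial (for $k=2$ it is precisely the definition of a $\join$-sub-trellis). For $k\geq 3$, set $s:=\bigvee\{x_1,\ldots,x_{k-1}\}$, which by the induction hypothesis exists and belongs to $A$. The key step is to show $\bigvee\{x_1,\ldots,x_k\}=s\join x_k$. One inclusion between the two sets of upper bounds is immediate; for the other, if $u$ is an upper bound of $\{s,x_k\}$, then for each $i<k$ we have $x_i\unlhd s\unlhd u$, and right-transitivity of $x_i$ yields $x_i\unlhd u$, so $u$ is an upper bound of $\{x_1,\ldots,x_k\}$. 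Hence the two upper-bound sets coincide, so $\bigvee\{x_1,\ldots,x_k\}=\bigvee\{s,x_k\}=s\join x_k$; since $s,x_k\in A$ and $A$ is a $\join$-sub-trellis, this join lies in $A$, closing the induction. Combined with the first claim, this also identifies $\bigvee\{x_1,\ldots,x_k\}$ with every permuted, arbitrarily bracketed join of the $x_j$ inside $A$.

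The one delicate point — the reason the statement is not quite immediate in a non-transitive setting — is exactly the reduction $\bigvee\{x_1,\ldots,x_k\}=s\join x_k$: in a poset it is automatic, but here the implication ``$x_i\unlhd s$ and $s\unlhd u$ entail $x_i\unlhd u$'' genuinely uses that each $x_i$ is right-transitive, which is precisely what the hypothesis $A\subseteq X^\rtr$ provides. Item (ii) is then obtained by the dual argument, replacing $\join$, $X^\rtr$, ``upper bound'' and $\bigvee$ by $\meet$, $X^\ltr$, ``lower bound'' and $\bigwedge$, and using left-transitivity in the corresponding reduction step, with Theorem~\ref{Closeness_of_join_for_rtr+joinass}(ii) and Proposition~\ref{join_meet_ass_in_A}(ii) in place of their join counterparts.
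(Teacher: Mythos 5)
Your proposal is correct. The paper gives no proof here (it merely declares the result ``straightforward'' as a generalization of Theorem~\ref{Closeness_of_join_for_rtr+joinass}), and your argument is a valid filling-in along exactly the intended lines: existence and permutation-invariance come from Theorem~\ref{Closeness_of_join_for_rtr+joinass}(i) together with Proposition~\ref{join_meet_ass_in_A}(i), and the membership claim from closure of $A$ under $\join$. One small remark: the induction with the upper-bound comparison, while correct and a nice self-contained re-derivation of the reduction $\bigvee\{x_1,\ldots,x_k\}=s\join x_k$, is more than is needed for the ``genuinely new'' part --- once the cited theorem identifies $\bigvee\{x_1,\ldots,x_k\}$ with $(\cdots(x_1\join x_2)\join\cdots)\join x_k$, membership in $A$ follows immediately by iterating the $\join$-sub-trellis property on the partial joins.
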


\section{Triangular norms on bounded psosets and trellises}\label{t-norms_on_psosets}
In this section, we extend the notion of a t-norm to the setting of bounded psosets and trellises and provide some examples. For further use, we recall several notions and list some properties of binary operations on psosets and trellises.

\subsection{Binary operations on psosets and trellises}
In this subsection, we present some basic definitions and properties of binary operations on a psoset or trellis. Some of them are adopted from the corresponding notions on a poset or lattice (see, e.g.,~\cite{DaveyPriestley2002,Fried1973b,Yettou2019}).
Consider a bounded psoset $\mathbb{P}=(X,\unlhd)$. A binary operation~$F$ on $\mathbb{P}$ is called:
\begin{enumerate}[label=(\roman*)]
\item \textit{commutative}, if $F(x,y)=F(y,x)$, for any $x,y \in X$;
\item \textit{associative}, if $F(x,F(y,z))=F(F(x,y),z)$, for any $x,y,z \in X$;
\item \textit{right-increasing}, if $x \unlhd y \text{ implies } F(z,x) \unlhd F(z,y)$, for any $x,y,z \in X$;
\item \textit{left-increasing},  if $x \unlhd y \text{ implies } F(x,z) \unlhd F(y,z)$, for any $x,y,z \in X$;
\item \textit{increasing}, if $x \unlhd y$ and $z \unlhd t \text{ implies }  F(x,z) \unlhd F(y,t)$, for any $x,y,z,t \in X$.
\end{enumerate} 
If a binary operation $F$ on $\mathbb{P}$ is increasing, then it is right- and left-increasing. The converse also holds if $\unlhd$ is transitive, {\em i.e.}, if $\mathbb{P}$ is a poset.

A binary operation $F$ on a trellis $\mathbb{T}=(X,\unlhd,\meet ,\join)$
is called:
\begin{enumerate}[label=(\roman*)]
\item \textit{conjunctive}, if $F(x,y)\unlhd x\meet y$, for any $x,y \in X$;
\item \textit{disjunctive}, if $x\join y \unlhd F(x,y)$, for any $x,y \in X$.
\end{enumerate}

\begin{remark}
Consider a trellis $\mathbb{T}=(X,\unlhd,\meet,\join)$.
\begin{enumerate}[label=(\roman*)]
\item The meet $\meet$ (resp.\ join $\join$) is conjunctive (resp.\  disjunctive). 
\item If a binary operation $F$ on $X$ satisfies
$F(x,y)\unlhd x$ and $F(x,y)\unlhd y$ 
(resp.\ $x \unlhd F(x,y)$ and $y \unlhd F(x,y)$), 
for any $x,y\in X$, then it is conjunctive (resp.\ disjunctive). 
The converse holds if $\unlhd$ is transitive ({\em i.e.}, if $\mathbb{T}$ is a lattice).
If the trellis $\mathbb{T}$ is bounded, and $F$ is right- and left-increasing and $1$ (resp.\ $0$) 
is the neutral element of $F$, then $F$ being conjunctive (resp.\ disjunctive) implies that 
$F(x,y)\unlhd x$ and $F(x,y)\unlhd y$ (resp.\ $x \unlhd F(x,y)$ and $y \unlhd F(x,y)$), for any $x,y\in X$. 
\end{enumerate}
\end{remark}

\begin{example}\label{ex:leftrightnotincreasing}
Consider the bounded trellis $\mathbb{T}=(\{0,a,b,c,1\}, \unlhd, \meet , \join)$ with 
the Hasse diagram shown in Fig.~\ref{Fig02} and the binary operation $F$ 
on $\mathbb{T}$ in Table~\ref{tablelni}.

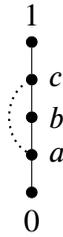
\begin{figure}[H]
\[\begin{tikzpicture}
\tikzstyle{estun}=[>=latex,thick,dotted]
    \vertex[fill] (0) at (0,0.5)  [label=below:$0$]  {};
    \vertex[fill] (a) at (0,1)  [label=right:$a$]  {};
    \vertex[fill] (b) at (0,1.5)  [label=right:$b$]  {};
    \vertex[fill] (c) at (0,2)  [label=right:$c$]  {};
    \vertex[fill] (1) at (0,2.5)  [label=above:$1$]  {};
   
    \path
        (0) edge (a)
        (a) edge (b)
        (b) edge (c)
        (c) edge (1)

        ;
   \draw[estun] (a) to [bend left=60] (c)
   ;
           
\end{tikzpicture}\]
\caption{Hasse diagram of the bounded trellis in Examples~\ref{ex:leftrightnotincreasing}~and~\ref{the meet and the join are not left- and right-increasing}.}\label{Fig02}
\end{figure}
\begin{table}[H]
\begin{center}
\begin{tabular}{|c|c|c|c|c|c|}
\hline 
$F$      & $0$ & $a$ & $b$ & $c$ & $1$ \\ 
\hline  
$0$	     & $0$& $0$ & $0$ & $0$ & $0$ \\  
\hline   
$a$	     & $0$& $a$ & $a$ & $b$ & $b$ \\ 
\hline 
$b$      & $0$& $a$ & $b$ & $b$ & $b$ \\ 
\hline 
$c$      & $0$& $b$ & $b$ & $c$ & $c$ \\
\hline 
$1$      & $0$& $b$ & $b$ & $c$ & $1$ \\ 
\hline 
\end{tabular}
\end{center}
      \caption{The binary operation of Example~\ref{ex:leftrightnotincreasing}.}
    \label{tablelni}
\end{table}
The operation $F$ is left-increasing, and due to its commutativity, also right-increa\-sing. However, 
it is not increasing since $a \lhd 1$ and $b \lhd c$, while $a=F(a,b) \ntrianglelefteq c=F(1,c)$. 
\end{example}

The meet and join operations of a trellis are not necessarily right- and left-increasing.

\begin{example}\label{the meet and the join are not left- and right-increasing}
Consider again the bounded trellis in Example~\ref{ex:leftrightnotincreasing}.
Since $b \unlhd c$ and $b \meet a=a \meet b= a \ntrianglelefteq 0= c \meet a=a \meet c$, the meet operation $\meet$ is neither right- nor left-increasing. A similar observation holds for the join operation $\join$.
\end{example}

In fact, the increasingness of $\meet$ and $\join$ is only satisfied in the lattice setting.

\begin{proposition}\label{meet-join_Increasing_equi_transitive}
Let $\mathbb{T}=(X,\unlhd ,\meet ,\join )$ be a trellis. The binary operation $\meet$ (or $\join$) is increasing if and only if $\unlhd$ is transitive ({\em i.e.}, $(X,\unlhd ,\meet ,\join )$ is a lattice). 
\end{proposition}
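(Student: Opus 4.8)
The plan is to prove both implications. The direction ``$\unlhd$ transitive $\Rightarrow$ $\meet$ increasing'' is the easy one: if $\mathbb{T}$ is a lattice, then $\meet$ is the usual infimum operation on a lattice, and it is a classical fact that in a lattice $x\unlhd y$ and $z\unlhd t$ imply $x\meet z\unlhd y\meet t$. Concretely, $x\meet z\unlhd x\unlhd y$ and $x\meet z\unlhd z\unlhd t$ (using transitivity to chain these), so $x\meet z$ is a lower bound of $\{y,t\}$, hence $x\meet z\unlhd y\meet t$. The same argument dualizes to $\join$.

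For the converse, I would argue contrapositively: assume $\unlhd$ is not transitive and show $\meet$ is not increasing. By Theorem~\ref{Transitivity_equivalent_to_Meet_join_ass}, non-transitivity of $\unlhd$ is equivalent to non-associativity of $\meet$, so there exist $a,x,y\in X$ with $(a\meet x)\meet y\neq a\meet(x\meet y)$. The idea is to exploit the absorption/part-preservation identities that force $\meet$ to ``almost'' behave monotonically, and locate a concrete failure. A cleaner route: pick $x,y,z$ witnessing non-transitivity directly, say $x\unlhd y$ and $y\unlhd z$ but $x\ntrianglelefteq z$. Then consider the inequality $x\unlhd y$ together with $z\unlhd z$: increasingness of $\meet$ would give $x\meet z\unlhd y\meet z$. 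Now $y\meet z = y$ since $y\unlhd z$, so this reads $x\meet z\unlhd y$. On the other hand $x\meet z\unlhd x$ is false in general, so this alone is not yet a contradiction; instead I would also use $x\unlhd x$ and $y\unlhd z$ to get $x\meet y\unlhd x\meet z$, i.e.\ $x\unlhd x\meet z$ (since $x\meet y=x$). Combining $x\unlhd x\meet z$ with $x\meet z\unlhd x$ (always true) and antisymmetry yields $x\meet z=x$, hence $x\unlhd z$, contradicting $x\ntrianglelefteq z$. Thus $\meet$ cannot be increasing. The argument for $\join$ is dual (replace $\meet$ by $\join$, reverse the order, use the dual witness and the dual absorption identity).

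The main obstacle is organizational rather than technical: one must be careful that the two one-variable monotonicity consequences of ``increasing'' — applied in the right slots — are genuinely available, and that the witness for non-transitivity is chosen so that both $x\meet y=x$ and $y\meet z=y$ hold (which is exactly what $x\unlhd y$ and $y\unlhd z$ give via the characterization $a\unlhd b \Leftrightarrow a\meet b=a$). I should also note explicitly that Example~\ref{the meet and the join are not left- and right-increasing} already exhibits a proper trellis where $\meet$ fails even the weaker one-sided increasingness, which is consistent with this proposition; and I should remark that the parenthetical ``$\meet$ or $\join$'' is justified because transitivity of $\unlhd$ is the common equivalent of each (again Theorem~\ref{Transitivity_equivalent_to_Meet_join_ass}), so the statement for $\join$ follows by the same bookkeeping without a separate proof.
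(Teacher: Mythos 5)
Your proposal is correct and its decisive step — from $x\unlhd y\unlhd z$, use $x\unlhd x$ and $y\unlhd z$ with increasingness to get $x=x\meet y\unlhd x\meet z$, then conclude $x\meet z=x$ by antisymmetry and hence $x\unlhd z$ — is exactly the argument in the paper, with the easy direction likewise dismissed as the classical lattice fact. The initial detour via $x\meet z\unlhd y\meet z$ is harmless since you correctly discard it before settling on the right slots.
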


\begin{proof}
We give the proof for the meet operation $\meet$. Suppose that $\meet$ is increasing and $\unlhd$ is not transitive ({\em i.e.}, $ x\unlhd y \unlhd z$ and $x \ntrianglelefteq z$, for some $x,y,z \in X$). Since $\meet$  is increasing and $y\unlhd z$, it follows that $ x= x \meet y \unlhd x \meet z$. Hence, $x = x \meet z$. Thus, $ x \unlhd z$, a contradiction. For the converse, in a lattice the meet and join operations are increasing.
\end{proof}

Combining Theorem~\ref{Transitivity_equivalent_to_Meet_join_ass} and Proposition~\ref{meet-join_Increasing_equi_transitive} leads to the following corollary.

\begin{corollary}\label{Increasing_associative_means_transitivity}
Let $\mathbb{T}=(X,\unlhd,\meet,\join)$ be a trellis. The following statements are equivalent:
\begin{enumerate}[label=(\roman*),font=\upshape]  
\item $\meet$ (resp.\ $\join$) is increasing;
\item $\unlhd$ is transitive;
\item $\meet$ (resp.\ $\join$) is associative.
\end{enumerate}
\end{corollary}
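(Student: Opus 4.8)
The statement to prove is Corollary~\ref{Increasing_associative_means_transitivity}: for a trellis $\mathbb{T}=(X,\unlhd,\meet,\join)$, the three conditions ``$\meet$ (resp.\ $\join$) is increasing'', ``$\unlhd$ is transitive'', and ``$\meet$ (resp.\ $\join$) is associative'' are equivalent. Since the corollary explicitly states that it follows by combining Theorem~\ref{Transitivity_equivalent_to_Meet_join_ass} and Proposition~\ref{meet-join_Increasing_equi_transitive}, the plan is simply to chain these two already-established equivalences. There is essentially no new mathematical content; the work is just bookkeeping the logical implications in a cycle.

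\textbf{Key steps.} First I would invoke Proposition~\ref{meet-join_Increasing_equi_transitive}, which gives the equivalence of (i) and (ii): $\meet$ (or $\join$) is increasing iff $\unlhd$ is transitive. Second, I would invoke Theorem~\ref{Transitivity_equivalent_to_Meet_join_ass}, whose clauses (i) and (iii) give that $\unlhd$ is transitive iff one of $\meet$ and $\join$ is associative; in particular, transitivity of $\unlhd$ is equivalent to associativity of $\meet$, and also equivalent to associativity of $\join$ (this is exactly what item (iii) of that theorem asserts, together with the obvious fact that if both are associative then certainly the chosen one is). Hence (ii) $\Leftrightarrow$ (iii) in the corollary. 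Combining, (i) $\Leftrightarrow$ (ii) $\Leftrightarrow$ (iii), which is the claim. One should be slightly careful to note that the ``resp.'' is read consistently: Proposition~\ref{meet-join_Increasing_equi_transitive} and Theorem~\ref{Transitivity_equivalent_to_Meet_join_ass} both handle $\meet$ and $\join$ symmetrically, so whichever of the two operations is fixed throughout, both equivalences apply to it.

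\textbf{Main obstacle.} Honestly, there is no real obstacle here — the corollary is a one-line consequence of two prior results, and the only thing to watch is that Theorem~\ref{Transitivity_equivalent_to_Meet_join_ass} is phrased with ``one of $\meet$ and $\join$ is associative'' as clause (iii), so that the single-operation associativity statement in the corollary is directly covered rather than requiring a separate argument. If one wanted to be fully self-contained one could instead argue: transitivity $\Rightarrow$ both associative $\Rightarrow$ the chosen one associative $\Rightarrow$ (by clause (iii) of the theorem) transitivity, closing the loop; but citing the theorem as a black box is cleaner and is what the surrounding text signals.

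\begin{proof}
This follows immediately by combining Proposition~\ref{meet-join_Increasing_equi_transitive} and Theorem~\ref{Transitivity_equivalent_to_Meet_join_ass}. Indeed, fix the operation $\meet$ (the argument for $\join$ is identical). By Proposition~\ref{meet-join_Increasing_equi_transitive}, $\meet$ is increasing if and only if $\unlhd$ is transitive, which gives the equivalence of (i) and (ii). By Theorem~\ref{Transitivity_equivalent_to_Meet_join_ass}, $\unlhd$ is transitive if and only if one of $\meet$ and $\join$ is associative; in particular, $\unlhd$ is transitive if and only if $\meet$ is associative, which gives the equivalence of (ii) and (iii). Hence (i), (ii) and (iii) are equivalent.
\end{proof}
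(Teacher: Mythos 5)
Your proposal is correct and matches the paper exactly: the paper offers no separate proof, stating only that the corollary follows by combining Theorem~\ref{Transitivity_equivalent_to_Meet_join_ass} and Proposition~\ref{meet-join_Increasing_equi_transitive}, which is precisely the chain of equivalences you spell out. Your remark about reading clause (iii) of the theorem to get the single-operation version of associativity is the right (and only) point of care.
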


\subsection{T-norms on a bounded psoset or trellis}
In this subsection, we introduce the notion of a t-norm on a bounded psoset and present some examples. 

\begin{definition}\label{t-norm}
Let $\mathbb{P}=(X,\unlhd,0,1)$ be a bounded psoset. A binary operation $T: X^2 \to X$ is called a triangular norm (t-norm, for short) on $\mathbb{P}$ if it is increasing, commutative, associative and has $1$ as neutral element, {\em i.e.}, $T(x,1)=x$, for any $x \in X$.
\end{definition}

\begin{remark}\noindent
\begin{enumerate}[label=(\roman*)]  
    \item The meet operation $\meet$ on a proper bounded trellis $\mathbb{T}=(X,\unlhd,\meet,\join,0,1)$ is not a t-norm on~$\mathbb{T}$ as it is not necessarily increasing or associative. 
     \item Any t-norm on a bounded trellis is conjunctive.
\end{enumerate}
\end{remark}

The pseudo-order $\unlhd$ can be extended pointwisely to a pseudo-order on t-norms, {\em i.e.},
$T_1\unlhd T_2$ if $T_1(x,y)\unlhd T_2(x,y)$ for any $x,y\in X$.

The following proposition shows that the existence of an idempotent t-norm on a given bounded trellis $\mathbb{T}$ implies that $\mathbb{T}$ is a bounded lattice. The proof is straightforward. 

\begin{proposition}\label{idempotent_t-norm_imply_lattice}
Let $\mathbb{T}=(X,\unlhd,\meet,\join,0,1)$ be a bounded trellis. If there exists an idempotent t-norm $T$ on~$\mathbb{T}$, then $T=\meet$ and $\mathbb{T}$ is a bounded lattice.
\end{proposition}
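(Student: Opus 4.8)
The plan is to exploit the interplay between idempotency of $T$, its neutral element $1$, and increasingness, to first show $T$ coincides with $\meet$, and then invoke the associativity of $T$ together with Theorem~\ref{Transitivity_equivalent_to_Meet_join_ass} to conclude that $\mathbb{T}$ is a lattice. The first step is the substantive one; the second is essentially a citation.

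First I would argue that $T(x,y) \unlhd x$ and $T(x,y) \unlhd y$ for all $x,y \in X$. Fix $x,y$ and assume without loss of generality (using commutativity) that we want $T(x,y)\unlhd x$. Since $y \unlhd 1$ and $T$ is increasing (hence right-increasing), $T(x,y) \unlhd T(x,1) = x$ by the neutral element property. By commutativity the same gives $T(x,y) = T(y,x) \unlhd T(y,1) = y$. Hence $T(x,y)$ is a lower bound of $\{x,y\}$, so $T(x,y) \unlhd x \meet y$; that is, $T$ is conjunctive (this also follows from the remark after Definition~\ref{t-norm}, but the stronger pointwise bounds $T(x,y)\unlhd x$ and $T(x,y)\unlhd y$ are what I need). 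For the reverse inequality, let $z = x \meet y$, so $z \unlhd x$ and $z \unlhd y$. Using increasingness again, $z = T(z,z) \unlhd T(x,y)$, where the equality is idempotency of $T$ and the inequality applies increasingness to $z \unlhd x$ and $z \unlhd y$ simultaneously. By antisymmetry of $\unlhd$, combining $T(x,y) \unlhd x\meet y$ and $x\meet y \unlhd T(x,y)$ yields $T(x,y) = x \meet y$. Since $x,y$ were arbitrary, $T = \meet$.

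It then remains to conclude that $\mathbb{T}$ is a bounded lattice. Since $T = \meet$ and $T$ is associative (being a t-norm), the meet operation $\meet$ is associative. By Theorem~\ref{Transitivity_equivalent_to_Meet_join_ass}, associativity of $\meet$ is equivalent to transitivity of $\unlhd$ and to associativity of both $\meet$ and $\join$; hence $\unlhd$ is transitive and $\mathbb{T} = (X,\unlhd,\meet,\join,0,1)$ is a bounded lattice.

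The only place requiring genuine care is the step $x \meet y \unlhd T(x,y)$: it is tempting to reach for transitivity of $\unlhd$, which is exactly what we are not allowed to assume, so the argument must go through increasingness of $T$ applied to the pair of relations $x\meet y \unlhd x$ and $x \meet y \unlhd y$ in a single use of the increasingness axiom, together with idempotency $T(x\meet y, x\meet y) = x\meet y$. Everything else is a direct unwinding of the definitions, which is presumably why the authors call the proof straightforward.
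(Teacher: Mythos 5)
Your proof is correct, and it fills in exactly the argument the paper leaves implicit (the paper omits the proof as ``straightforward'' and, in the remark following the proposition, indicates the same mechanism: conjunctivity gives $T(x,y)\unlhd x\meet y$, while increasingness applied to $x\meet y\unlhd x$ and $x\meet y\unlhd y$ together with idempotency gives the reverse inequality, after which Theorem~\ref{Transitivity_equivalent_to_Meet_join_ass} yields transitivity). You are also right to flag that the single two-argument use of increasingness is the essential point, since chaining one-sided monotonicity steps would require the transitivity one is not entitled to assume.
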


Note that this result can be stated more generally outside the study of t-norms as already any increasing, conjunctive and idempotent binary operation coincides with the meet operation. Hence, there exists no increasing, conjunctive and idempotent binary operation on a proper trellis.

Next, we present some examples. We start with the smallest possible t-norm.

\begin{example}\label{T_drastic}
Let $\mathbb{P}=(X,\unlhd,0,1)$ be a bounded psoset. The binary operation $T_\D$ on $X$ defined by:
\begin{equation}
T_\D(x,y)=
\left\{\begin{array}{ll}
{x} & \text {, if } y=1\\
{y} & \text {, if } x=1\\
{0} & \text {, otherwise}
 \end{array}\right.
\end{equation}
is the smallest t-norm on $\mathbb{P}$, and will be called the {\em drastic t-norm}.
\end{example}

Note that on a bounded trellis $\mathbb{T}=(X,\unlhd,\meet,\join,0,1)$,
the t-norm $T_\D$ can also be written as
\begin{equation}
T_\D(x,y)=
\left\{\begin{array}{ll}
{x \meet y} & \text{, if } x=1 \text{ or } y=1\\
        {0} & \text {, otherwise}
 \end{array}\right..
\end{equation}

On a bounded modular lattice, the binary operation $T_\Z$,
a slight modification of the t-norm $T_\D$, defined by:
\begin{equation}
T_\Z(x, y)=
\left\{\begin{array}{ll}
{x \meet y} & \text {, if } x \join y=1\\
 {0}        & \text {, otherwise}
 \end{array}\right.
\end{equation}
is also a t-norm~\cite{BaetsMesiar1999}. This result can be extended to the setting of bounded modular trellises under a suitable necessary and sufficient condition. The proof is based on the following proposition that recalls an interesting property of bounded modular trellises. This property will be also exploited in the next section.

\begin{proposition}{\rm \cite{Skala1971}}\label{Interesting_implication_modular_trellis}
Let $\mathbb{T}=(X,\unlhd,\meet,\join,0,1)$ be a bounded modular trellis. For any $x,y,z \in X$,
it holds that $x \unlhd z$ and $x \join y=1$ imply $x\meet y \unlhd z$.
\end{proposition}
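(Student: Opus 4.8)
The statement to be proven is Proposition~\ref{Interesting_implication_modular_trellis}: in a bounded modular trellis, $x \unlhd z$ and $x \join y = 1$ imply $x \meet y \unlhd z$. The plan is to apply the modularity law directly, with the variables chosen so that the hypotheses feed into its antecedent. Modularity says: $u \unlhd w$ implies $u \join (v \meet w) = (u \join v) \meet w$, for all $u,v,w$. The natural instantiation here is $u := x$, $v := y$, $w := z$, which is legitimate precisely because $x \unlhd z$ is assumed. This yields the identity
\[
x \join (y \meet z) = (x \join y) \meet z\,.
\]

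Now I would substitute the second hypothesis $x \join y = 1$ into the right-hand side, obtaining $(x \join y) \meet z = 1 \meet z = z$, using that $1$ is the greatest element of the bounded trellis (so $1 \meet z = z$). Hence $x \join (y \meet z) = z$. From this equality, the element $y \meet z$ satisfies $y \meet z \unlhd x \join (y \meet z) = z$, since in any trellis $a \unlhd a \join b$ for all $a,b$ (a consequence of absorption/idempotency). Therefore $y \meet z \unlhd z$. But what we want is $x \meet y \unlhd z$, not $y \meet z \unlhd z$ — so there is still a gap to close, and this is where I expect the only real subtlety to lie.

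To bridge $x \meet y$ and $y \meet z$: since $x \unlhd z$, I would like to conclude $x \meet y \unlhd z \meet y = y \meet z$, and then combine with $y \meet z \unlhd z$. However, the meet operation is \emph{not} increasing on a proper trellis, so $x \unlhd z \Rightarrow x \meet y \unlhd z \meet y$ is not available for free. The way around this is to reuse modularity once more rather than to invoke monotonicity. Observe that from $x \join (y \meet z) = z$ we get $x \unlhd z$ (already known) and, more usefully, we can compute $x \meet y$ in terms of this relation: since $x \unlhd z$, modularity with $u := x$, $v := y$, $w := z$ is exactly the identity we already have, and the point is that $x \meet y = x \meet y \meet (x \join (y \meet z)) = x \meet y \meet z$ once one checks $x \meet y \unlhd x \join (y \meet z)$; indeed $x \meet y \unlhd x \unlhd x \join(y\meet z) = z$, which already gives $x \meet y \unlhd z$ directly, provided $x \meet y \unlhd x$ holds — and it does, by the definition of meet as a lower bound. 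So in fact the cleanest route is: $x \meet y \unlhd x \unlhd z$; but that only works if $\unlhd$ is transitive \emph{along this particular chain}, which is not automatic in a trellis.

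This is the crux, and it is handled by Theorem~\ref{Ass=tr} / Corollary~\ref{Eq_subsets_pseud-chain_and_modular} together with the structure of the argument: one does \emph{not} chain $\unlhd$ blindly, but instead shows $x \meet y \unlhd z$ by exhibiting $x \meet y$ as $\unlhd$ a common element already known to be $\unlhd z$ via a \emph{single} application of modularity. Concretely, I would argue: $x \meet y \unlhd y$ and $x \meet y \unlhd x \unlhd z$, so $x \meet y$ is a lower bound of $\{y, z\}$? No — we only know $x \meet y \unlhd x$ and $x \unlhd z$, and combining these needs a transitivity step. The correct and intended proof therefore goes through the equality $x \join (y\meet z)=z$ and then observes that $x \meet y \unlhd x \meet (x \join (y \meet z)) = x$ — trivial — while separately $y \meet z \unlhd z$; the missing link $x\meet y \unlhd z$ is obtained by noting $x \meet y \unlhd y \meet z$ is false in general, so instead one writes $x = x \meet z$ (from $x \unlhd z$) and hence $x \meet y = (x \meet z) \meet y = x \meet (z \meet y)$? — this again silently uses associativity. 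Recognizing that all these shortcuts require associativity or monotonicity that a trellis lacks, the honest plan is: having derived $x \join (y \meet z) = z$, apply modularity a \emph{second} time in the form $x \meet y \unlhd z$ by using that $x \meet y \unlhd x \unlhd z$ is valid here because $z$ (being $\unlhd 1$ trivially and sitting above $x$) — no. I expect the main obstacle, and the genuine content of the author's proof, to be exactly this transitivity-replacement step: getting from $y \meet z \unlhd z$ and the relation of $x \meet y$ to $x$ over to $x \meet y \unlhd z$ without transitivity, presumably by a clever second use of the modular identity or of Proposition~\ref{theoremoftransitiveelement} applied to a transitive element forced to exist by the bound $1$. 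I would look for that bridging identity as the key step.
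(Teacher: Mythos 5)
Your proposal does not prove the statement. (For calibration: the paper itself gives no proof of Proposition~\ref{Interesting_implication_modular_trellis} --- it is quoted from Skala --- so your attempt has to stand on its own.) What you actually establish is only the opening identity $x \join (y \meet z) = (x\join y)\meet z = 1\meet z = z$, which is indeed the right first move; everything after that is a sequence of abandoned attempts, each of which you yourself retract (``No ---'', ``this again silently uses associativity'', ``I would look for that bridging identity as the key step''). The target inequality $x\meet y \unlhd z$ is never derived. You correctly diagnose that the chain $x\meet y\unlhd x\unlhd z$ cannot be closed without transitivity, but diagnosing the obstacle is not overcoming it: as written, the argument terminates with an open gap precisely at the point where the proposition's content lies.

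The gap can be closed by further applications of modularity, not by any appeal to transitive elements. Sketch: (i)~put $p:=y\meet z$, so $x\join p=z$; since $p\unlhd y$, modularity gives $p\join(x\meet y)=(p\join x)\meet y=z\meet y=p$, hence $x\meet y\unlhd p$. (ii)~Put $t:=(x\meet y)\join z$; since $x\meet y\unlhd x$ and $z\meet x=x$, modularity gives $t\meet x=(x\meet y)\join(z\meet x)=(x\meet y)\join x=x$, so $x\unlhd t$, and then modularity with $x\join y=1$ yields $t=x\join(y\meet t)$. (iii)~Since $x\meet y\unlhd y$, modularity gives $t\meet y=(x\meet y)\join(z\meet y)=(x\meet y)\join p=p$ by~(i), whence $t=x\join p=z$; that is, $(x\meet y)\join z=z$, which is exactly $x\meet y\unlhd z$. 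None of these steps (nor anything equivalent) appears in your proposal, so the claim remains unproven as submitted.
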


\begin{proposition}\label{Special_t-norm}
Let $\mathbb{T}=(X,\unlhd,\meet,\join,0,1)$ be a bounded modular trellis. Then it holds that $T_\Z$ is a t-norm on~$\mathbb{T}$ if and only if 
\begin{equation}\label{condmod}
(\forall (x,y,z,t) \in X^4) (((x \meet y\neq 0) \mbox{ and } (x \join y =1)) 
%\mbox{ implies }
\Rightarrow (x \join z)\join (y \join t)=1)\,. 
\end{equation}
\end{proposition}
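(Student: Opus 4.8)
The plan is to verify, one by one, the four defining properties of a t-norm for $T_\Z$. Commutativity is inherited from that of $\meet$ and $\join$, and $1$ is a neutral element since $x\join 1=1$ forces $T_\Z(x,1)=x\meet 1=x$. So the statement reduces to proving that, on a bounded modular trellis, $T_\Z$ is both increasing and associative if and only if condition~\eqref{condmod} holds. It is convenient to record the elementary equivalence $T_\Z(a,b)\neq 0\iff(a\join b=1\text{ and }a\meet b\neq 0)$.

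For the necessity of~\eqref{condmod}, suppose $T_\Z$ is a t-norm, hence increasing, and take $(x,y,z,t)\in X^4$ with $x\meet y\neq 0$ and $x\join y=1$. Then $T_\Z(x,y)=x\meet y\neq 0$, and from $x\unlhd x\join z$ and $y\unlhd y\join t$ increasingness gives $x\meet y=T_\Z(x,y)\unlhd T_\Z(x\join z,y\join t)$; the right-hand side is therefore nonzero, so by the equivalence above $(x\join z)\join(y\join t)=1$, which is~\eqref{condmod}.

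For sufficiency, assume~\eqref{condmod}. First, $T_\Z$ is increasing: let $x\unlhd y$ and $z\unlhd t$; we may assume $T_\Z(x,z)\neq 0$, i.e.\ $x\join z=1$ and $x\meet z\neq 0$ (otherwise $T_\Z(x,z)=0\unlhd T_\Z(y,t)$). Applying~\eqref{condmod} to the pair $(x,z)$, with its two free variables instantiated to $y$ and $t$, yields $(x\join y)\join(z\join t)=1$, that is $y\join t=1$ (using $x\unlhd y$ and $z\unlhd t$), so $T_\Z(y,t)=y\meet t$; two applications of Proposition~\ref{Interesting_implication_modular_trellis} --- to $x\unlhd y$ with $x\join z=1$, and to $z\unlhd t$ with $z\join x=1$ --- place $x\meet z$ below both $y$ and $t$, hence below $y\meet t=T_\Z(y,t)$.

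The main obstacle is associativity. By commutativity, the identity $T_\Z(T_\Z(x,y),z)=T_\Z(x,T_\Z(y,z))$ for a triple $(x,y,z)$ is equivalent to the same identity for the reversed triple $(z,y,x)$, so it is enough to establish it when $T_\Z(x,y)=0$ and when both $T_\Z(x,y)=x\meet y\neq 0$ and $T_\Z(y,z)=y\meet z\neq 0$. If $T_\Z(x,y)=0$, the left-hand side is $0$, and so is the right-hand side unless $T_\Z(y,z)=y\meet z\neq 0$ and $T_\Z(x,y\meet z)\neq 0$; but then~\eqref{condmod} applied to the pair $(x,y\meet z)$ with free variables $0$ and $y$, together with the absorption identity $(y\meet z)\join y=y$, gives $x\join y=1$, while Proposition~\ref{Interesting_implication_modular_trellis} gives $x\meet(y\meet z)\unlhd x\meet y$, so $x\meet y\neq 0$ and hence $T_\Z(x,y)\neq 0$, a contradiction; thus this subcase does not occur. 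In the remaining case, one first uses modularity (Definition~\ref{Modular_Trellis}) to prove the equivalence $(x\meet y)\join z=1\iff x\join(y\meet z)=1$: assuming the left-hand equality, the modular law gives both $(x\meet y)\join(y\meet z)=\bigl((x\meet y)\join z\bigr)\meet y=y$ and $(x\meet y)\join(y\meet z)=\bigl(x\join(y\meet z)\bigr)\meet y$, whence $y\unlhd x\join(y\meet z)$; then $x\join(y\meet z)$ is an upper bound of $\{x,y\}$, so $1=x\join y\unlhd x\join(y\meet z)$ and $x\join(y\meet z)=1$, and the converse follows by the $x\leftrightarrow z$ symmetry of the hypotheses. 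If neither join equals $1$, both sides of the associativity identity are $0$; if both equal $1$, the two sides are $(x\meet y)\meet z$ and $x\meet(y\meet z)$, and repeatedly invoking Proposition~\ref{Interesting_implication_modular_trellis} with the four join conditions $x\join y=1$, $y\join z=1$, $(x\meet y)\join z=1$ and $x\join(y\meet z)=1$ shows that each of these two elements lies below $x$, $y$ and $z$, hence below the other, so they coincide by antisymmetry. The recurring difficulty --- and the reason the modular law and Proposition~\ref{Interesting_implication_modular_trellis} are invoked at nearly every step --- is that in the absence of transitivity one can neither chain $\unlhd$-relations nor use monotonicity of $\meet$ and $\join$; organizing the case analysis around the pairs $(a,b)$ with $T_\Z(a,b)\neq 0$ is what keeps it tractable.
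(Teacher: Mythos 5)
Your proof is correct and follows essentially the same route as the paper: necessity via increasingness applied to $x\unlhd x\join z$, $y\unlhd y\join t$; sufficiency by checking increasingness with \eqref{condmod} and Proposition~\ref{Interesting_implication_modular_trellis}, then associativity via a modularity-based equivalence of the two join conditions followed by the meet-associativity argument. The only difference is bookkeeping: the paper proves the equivalence of the full condition pairs $(y\join z=1 \text{ and } x\join(y\meet z)=1)\iff(x\join y=1\text{ and }z\join(x\meet y)=1)$, which absorbs your mixed case $T_\Z(x,y)=0$, $T_\Z(y,z)\neq 0$, whereas you handle that case by a separate (valid) contradiction argument using \eqref{condmod} with the instantiation $(x,y\meet z,0,y)$.
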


\begin{proof}
Suppose that $\mathbb{T}=(X,\unlhd,\meet,\join,0,1)$ satisfies condition~\eqref{condmod}. Obviously, $T_\Z$ is commutative and has $1$ as neutral element.

Next, we prove that $T_\Z$ is increasing. Let $x,y,z,t \in X$ such that $ x \unlhd y$ and $ z \unlhd t$. We consider the following two cases: 
 \begin{enumerate}[label=(\roman*)]  
\item If $T_\Z(x,z)=0$, then it trivially holds that $T_\Z(x,z) \unlhd T_\Z(y,t)$.
\item If $T_\Z(x,z)=x\meet z\neq 0$, then $x \join z =1$. Due to
\eqref{condmod}, it holds that $(x \join y)\join (z \join t)=1$.
Since $x \unlhd y$ and $ z \unlhd t$, this implies $y\join t=1$, and hence 
$T_\Z(y,t)=y \meet t$.
Furthermore, from Proposition~\ref{Interesting_implication_modular_trellis} it follows that $x \meet z \unlhd y$ and $x \meet z \unlhd t$. Hence,  $x \meet z \unlhd y \meet t$. Thus, $T_\Z(x,z) \unlhd T_\Z(y, t)$. 
\end{enumerate}
Therefore, $T_\Z$ is increasing. 

Now, we prove that $T_\Z$ is associative. Let $ x ,y,z \in X$, then
\[
T_\Z(x,T_\Z(y,z))
=\left\{\begin{array}{ll}{x \meet (y \meet z)} & \text {, if } y \join z=1 \text { and } x \join(y \meet z)=1 \\ 
{0} & \text{, otherwise}\end{array}\right.
\]
and
\[
T_\Z(T_\Z(x,y),z)
=\left\{\begin{array}{ll}{(x\meet y)\meet z} & {\text {, if } x \join y=1 \text { and } z \join (x \meet y)=1} \\ 
{0} & {\text {, otherwise}}\end{array}\right.\,.
\]
First, we show that the condition ($y\join z=1$  and $x \join (y \meet z)=1$)  is equivalent to the condition ($x \join y=1$ and $z \join (x \meet y)=1$). Suppose that $y\join z=1$  and $x \join (y \meet z)=1$. Since $y \unlhd x \join y$ and $y \join z=1$, Proposition~\ref{Interesting_implication_modular_trellis} implies $ y \meet z \unlhd x \join y $. Together with $x\unlhd x\join y$, we get $1= x \join (y \meet z) \unlhd x \join y$. 
Thus, $x \join y=1$. Moreover, since $x \join(y \meet z)=1$, it holds that $y= y \meet(x \join (y \meet z))$. The modularity of $X$ implies 
$y=y \meet(x \join (y \meet z))=(x \meet y) \join(y \meet z)= ((x \meet y) \join z )\meet y$. Thus,  
$y \unlhd(x \meet y) \join z$. On the other hand, since 
$z \unlhd(x \meet y) \join z$, it follows that $y \join z \unlhd (x \meet y) \join z$. Hence, $(x \meet y) \join z=1$. The proof of the converse implication is similar.

Second, it now suffices to prove that $x \meet (y \meet z)= (x \meet y) \meet z$, for any $ x ,y,z \in X$ satisfying $y\join z=1$  and $x \join (y \meet z)=1$. Since $y \meet z \unlhd z$  and $x \join (y \meet z)=1$, Proposition~\ref{Interesting_implication_modular_trellis} implies $x \meet (y \meet z) \unlhd z$. Similarly, $y \meet z \unlhd y$ and $x \join (y \meet z)=1$ imply $x \meet (y \meet z) \unlhd y$. Together with $x \meet (y \meet z) \unlhd x$, we obtain $x \meet (y \meet z) \unlhd x \meet y$, and, hence, $x \meet (y \meet z) \unlhd (x \meet y) \meet z$. Similarly, we find $(x \meet y) \meet z \unlhd x \meet (y \meet z)$. Hence, $x \meet (y \meet z) = (x \meet y) \meet z$. Therefore, $T_\Z$ is associative. Consequently, $T_\Z$ is a t-norm on~$\mathbb{T}$. 

To prove the converse implication, suppose that $T_\Z$ is a t-norm on~$\mathbb{T}$. Let $x,y,z,t \in X$ be such that $x \meet y\neq 0$ and $x \join y =1$. It then holds that $T_\Z(x,y)= x \meet y$. Since $T_\Z$ is increasing, it follows that $T_\Z(x,y) \unlhd T_\Z(x \join z, y \join t)$. Hence, $ T_\Z(x \join z, y \join t) \not=0$. Thus, $(x \join z) \join  ( y \join t)=1$.
\end{proof}

\begin{remark}\label{Z_condition-not-satis}
Condition~\eqref{condmod} does not hold for any bounded modular trellis. Indeed, let $\mathbb{T}=(\{0,a,b,c,d,e,1\},\unlhd)$ be the bounded modular trellis given by the Hasse diagram in Fig.~\ref{Fig_Z-condmod}. One can verify that $\mathbb{T}$ does not satisfy condition~\eqref{condmod}.

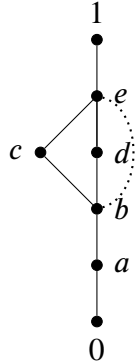
\begin{figure}[H]
\[\begin{tikzpicture}
\tikzstyle{estun}=[>=latex,thick,dotted]
    \vertex[fill] (0) at (0,0)  [label=below:$0$]  {};
    \vertex[fill] (a) at (0,0.75)  [label=right:$a$]  {};
    \vertex[fill] (b) at (0,1.5)  [label=right:$b$]  {};
    \vertex[fill] (c) at (-0.75,2.25)  [label=left:$c$]  {};
    \vertex[fill] (d) at (0,2.25)  [label=right:$d$]  {};
     \vertex[fill] (e) at (0,3)  [label=right:$e$]  {};
    \vertex[fill] (1) at (0,3.75)  [label=above:$1$]  {};
   
    \path
        (0) edge (a)
        (a) edge (b)
        (b) edge (c)
        (b) edge (d)
        (c) edge (e)
        (d) edge (e)
        (d) edge (1)
        
        ;
   \draw[estun] (b) to [bend right=75] (e)   
      
        ;
\end{tikzpicture}\]
\caption{Hasse diagram of the bounded modular trellis in Remark~\ref{Z_condition-not-satis}}\label{Fig_Z-condmod}
\end{figure}
\end{remark}

The following example presents a bounded modular trellis that satisfies condition~\eqref{condmod}.

\begin{example} \label{modular}
Let $\mathbb{T}=(\{0,a,b,c,d,e,f,1\}, \unlhd, \wedge, \vee)$ be the bounded modular trellis given by the Hasse diagram in Fig.~\ref{T_Z}. One easily verifies that condition~\eqref{condmod} is fulfilled.
 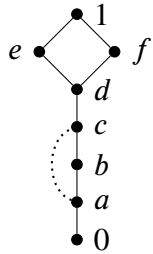
\begin{figure}[H]
\[\begin{tikzpicture}
\tikzstyle{estun}=[>=latex,thick,dotted]
    \vertex[fill] (0) at (0,0)  [label=right:$0$]  {};
    \vertex[fill] (a) at (0,0.5)  [label=right:$a$]  {};
    \vertex[fill] (b) at (0,1)  [label=right:$b$]  {};
    \vertex[fill] (c) at (0,1.5)  [label=right:$c$]  {};
    \vertex[fill] (d) at (0,2)  [label=right:$d$]  {};
    \vertex[fill] (e) at (-0.5,2.5)  [label=left:$e$]  {};
    \vertex[fill] (f) at (0.5,2.5)  [label=right:$f$]  {};
    \vertex[fill] (1) at (0,3)  [label=right:$1$]  {};
   
    \path
        (0) edge (a)
        (a) edge (b)
        (b) edge (c)
        (c) edge (d)
        (d) edge (e)
        (d) edge (f)   
        (f) edge (1)
        (e) edge (1)
        
       ;
   \draw[estun] (a) to [bend left=70] (c) ;                
\end{tikzpicture}\]
\caption{Hasse diagram of the bounded modular trellis in Example~\ref{modular}.}\label{T_Z}
\end{figure}
 The t-norm $T_\Z$ is listed in Table~\ref{mod}. To improve interpretability, from here on we indicate the points in which a t-norm coincides with the meet operation in gray.
 \begin{table}[H]
\begin{center}
\begin{tabular}{|c|c|c|c|c|c|c|c|c|}
\hline 
$T_\Z$ & $0$ & $a$ & $b$ & $c$ & $d$ & $e$& $f$& $1$ \\ 
\hline 
$0$ &  \cg $0$ & \cg $0$ & \cg $0$ & \cg $0$ & \cg $0$  &\cg  $0$ &\cg  $0$&\cg  $0$\\ 
\hline 
$a$ & \cg  $0$ &$0$ &$0$ &\cg  $0$ & $0$ & $0$ & $0$  &\cg  $a$ \\ 
\hline 
 $b$  & \cg $0$ & $0$ & $0$ & $0$ & $0$ & $0$  & $0$&\cg  $b$ \\ 
\hline 
$c$ &\cg $0$ & \cg $0$ & $0$ & $0$ & $0$ & $0$  & $0$&\cg  $c$ \\ 
\hline 
$d$ &\cg $0$ & $0$ & $0$ & $0$ & $0$ & $0$  & $0$&\cg  $d$ \\ 
\hline 
$e$ &\cg  $0$ &$0$ & $0$ & $0$ & $0$ & $0$  & \cg $d$&\cg  $e$ \\ 
\hline 
$f$&\cg  $0$ &$0$ & $0$ & $0$ & $0$ & \cg $d$  & $0$ &\cg  $f$ \\ 
\hline 
$1$& \cg $0$ &\cg  $a$ &\cg  $b$ &\cg  $c$ &\cg  $d$  &\cg  $e$&\cg  $f$  &\cg  $1$  \\ 
\hline 
\end{tabular}
\end{center}
\caption{The t-norm $T_\Z$ of Example~\ref{modular}.}\label{mod}
\end{table}
\end{example} 

The notion of a co-atom in a poset can be naturally extended to psosets.

\begin{definition}
Let $\mathbb{P}=(X,\unlhd,0,1)$ be a bounded psoset. An element $a \in X$ is called a \textit{co-atom} if it is a maximal element of the set $X \backslash\{1\}$.
\end{definition}

Note that any co-atom of a psoset is a right-transitive element. The following trivial proposition shows that any co-atom of a psoset can be used to define a t-norm on that psoset.

\begin{proposition}\label{T-norm_based_on_coatomes}
 Let $\mathbb{P}=(X,\unlhd,0,1)$ be a bounded psoset and
 $i$ be a co-atom of~$\mathbb{P}$. The binary operation $T_i$ on $X$ defined by 
 \begin{equation}
T_i(x,y)=
\left\{\begin{array}{ll}
{x} & {\text {, if } y=1}\\
{y} & {\text {, if } x=1} \\
{i}        & {\text {, if } (x,y)=(i,i)}\\
{0}        & {\text {, otherwise}}
\end{array}\right.
\end{equation}
is a t-norm on $\mathbb{P}$.
\end{proposition}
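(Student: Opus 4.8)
The plan is to verify directly the four defining properties of a t-norm from Definition~\ref{t-norm}, taking advantage of the fact that $T_i$ can only produce the values $0$, $i$, or (via its first two clauses) one of the arguments themselves. Commutativity is immediate, since the defining clauses of $T_i$ are symmetric in $x$ and $y$, and $T_i(x,1)=x=T_i(1,x)$ shows that $1$ is a neutral element.

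For associativity I would first handle the situation in which at least one of $x,y,z$ equals $1$. If $z=1$, then $T_i(y,z)=y$ and $T_i(x,1)=x$, so both $T_i\bigl(x,T_i(y,z)\bigr)$ and $T_i\bigl(T_i(x,y),z\bigr)$ equal $T_i(x,y)$; if $z\ne 1$ but $y=1$, both sides reduce to $T_i(x,z)$; and if $z,y\ne 1$ but $x=1$, both sides reduce to $T_i(y,z)$. When none of $x,y,z$ equals $1$, the key observation is that $T_i(u,v)\in\{0,i\}$ for all $u,v\ne 1$, with value $i$ exactly when $u=v=i$. A short case distinction on how many of $x,y,z$ equal $i$ then shows that both associativity expressions equal $i$ when $x=y=z=i$ and equal $0$ in every other sub-case, so $T_i$ is associative.

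For increasingness, suppose $x\unlhd y$ and $z\unlhd t$, and distinguish cases according to the value of $T_i(x,z)$. If $T_i(x,z)=0$ there is nothing to prove, since $0$ is the smallest element. If $T_i(x,z)=x$ because $z=1$, then $z\unlhd t$ forces $t=1$ (as $1$ is the greatest element), hence $T_i(y,t)=y$, and $x\unlhd y$ settles this case; the case $T_i(x,z)=z$ because $x=1$ is symmetric, now using that $x\unlhd y$ forces $y=1$. The only remaining case is $T_i(x,z)=i$, i.e.\ $x=z=i$, so that $i\unlhd y$ and $i\unlhd t$; here I would invoke that $i$ is a co-atom, that is, a \emph{maximal} element of $X\setminus\{1\}$, to deduce $y\in\{i,1\}$ and $t\in\{i,1\}$. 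Checking the resulting sub-cases shows that $T_i(y,t)$ is one of $i$, $y$, $t$, each of which satisfies $i\unlhd T_i(y,t)$, as required.

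The genuine content of the proof is merely the bookkeeping in the associativity step; there is no conceptual obstacle, which is why the statement is labelled trivial. The single place where the hypothesis on $i$ is truly used is the co-atom (maximality) property in the last case of the increasingness check: it is precisely what prevents $T_i$ from decreasing when the pair $(i,i)$ is replaced by a coordinatewise larger but otherwise incomparable pair. Intuitively, $T_i$ is nothing but the minimum operation on the three-element chain $0\unlhd i\unlhd 1$, extended by $0$ on all remaining pairs except those involving $1$, which explains why associativity and monotonicity are inherited.
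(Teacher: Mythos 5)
Your proof is correct: the paper states this proposition without proof (calling it trivial), and your direct verification of commutativity, neutrality, associativity, and increasingness — with the co-atom maximality used exactly where you identify it, to force $y,t\in\{i,1\}$ in the last monotonicity case — is precisely the intended argument.
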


We conclude this subsection with some illustrative examples. Note that from here on, when presenting a t-norm on a finite bounded psoset or trellis in tabular form, in order to save space, 
we do not include the rows and columns corresponding to $0$ and $1$. Hence, we present a t-norm as a binary operation on $X\setminus\{0,1\}$.

\begin{example} \label{sixtnorms}
Consider again the bounded modular trellis in Example~\ref{ex:leftrightnotincreasing} satisfying condition~\eqref{condmod}. There are six t-norms on this bounded modular trellis, namely the drastic t-norm $T_1:=T_\D=T_\Z$, the t-norm $T_2:=T_c$ 
associated with the co-atom $c$ and the t-norms in Table~\ref{fourtnorms}.
\begin{table}[H]
\begin{center}
\begin{tabular}{|c|c|c|c|}
\hline 
$T_{3}$ & $a$& $b$ & $c$ \\ 
\hline 
$a$	   & $0$& $0$ & \cg $0$ \\  
\hline 
$b$	   & $0$& $0$ & $0$ \\ 
\hline 
$c$    & \cg $0$& $0$ & $b$ \\ 
\hline 
\end{tabular}
\quad
\begin{tabular}{|c|c|c|c|}
\hline 
$T_{4}$ & $a$& $b$ & $c$  \\ 
\hline 
$a$	   & $0$& $0$ & \cg $0$  \\  
\hline 
$b$	   & $0$& \cg $b$ & \cg $b$ \\ 
\hline 
$c$    & \cg $0$& \cg $b$ & $b$ \\ 
\hline 
\end{tabular}\\ 
\vspace*{.2cm}
\begin{tabular}{|c|c|c|c|}
\hline 
$T_{5}$ & $a$& $b$ & $c$ \\ 
\hline 
$a$	   & $0$& $0$ & \cg $0$ \\  
\hline 
$b$	   & $0$& $0$ & \cg $b$ \\ 
\hline 
$c$   & \cg $0$& \cg $b$ & \cg $c$ \\ 
\hline 
\end{tabular}
\quad
\begin{tabular}{|c|c|c|c|}
\hline 
$T_{6}$ & $a$& $b$ & $c$ \\ 
\hline 
$a$	  & $0$& $0$ & \cg $0$ \\  
\hline 
$b$	  & $0$& \cg $b$ & \cg $b$ \\ 
\hline 
$c$  & \cg $0$& \cg $b$ & \cg $c$ \\ 
\hline 
\end{tabular}
\caption{The t-norms $T_3$--$T_6$ of Example~\ref{sixtnorms}.}\label{fourtnorms}
\end{center}
\end{table}
One easily verifies that $T_{6}$ is the greatest t-norm on~$\mathbb{T}$. This set of six t-norms $\{T_{1}, \ldots, T_{6}\}$ on $\mathbb{T}$ constitutes a bounded lattice given by the Hasse diagram in Figure~\ref{figsixtnorms}.

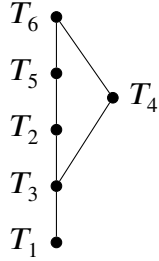
\begin{figure}[H]
\[\begin{tikzpicture}
\tikzstyle{estun}=[>=latex,thick,dotted]
    \vertex[fill] (T_1) at (0,0)  [label=left:$T_1$]  {};
    \vertex[fill] (T_3) at (0,0.75)  [label=left:$T_3$]  {};
    \vertex[fill] (T_2) at (0,1.5)  [label=left:$T_2$]  {};
    \vertex[fill] (T_5) at (0,2.25)  [label=left:$T_5$]  {};
    \vertex[fill] (T_4) at (0.75,1.925)  [label=right:$T_4$]  {};
    \vertex[fill] (T_6) at (0,3)  [label=left:$T_6$]  {};
   
    \path
        (T_1) edge (T_3)
        (T_3) edge (T_2)
        (T_2) edge (T_5)
        (T_3) edge (T_4)
        (T_5) edge (T_6)
        (T_4) edge (T_6)

       ;
\end{tikzpicture}\]
\caption{Hasse diagram of the bounded lattice of the six t-norms in Example~\ref{sixtnorms}.}\label{figsixtnorms}
\end{figure}

\end{example}

While the meet operation is the greatest t-norm on a bounded lattice, it is not a t-norm
on a proper bounded trellis. It is even possible that there is no greatest t-norm at all
on a proper bounded trellis.

\begin{example}\label{No_greatest_t-norm} 
Let $\mathbb{T}=(\{0,a,b,c,d,e,1\}, \unlhd , \meet , \join)$ be the bounded trellis given by the Hasse diagram in Fig.~\ref{Fig03}. 
\begin{figure}[H]
\[\begin{tikzpicture}
\tikzstyle{estun}=[>=latex,thick,dotted]
    \vertex[fill] (0) at (0,0)  [label=left:$0$]  {};
    \vertex[fill] (a) at (0,0.75)  [label=left:$a$]  {};
    \vertex[fill] (b) at (0,1.5)  [label=left:$b$]  {};
    \vertex[fill] (c) at (0,2.25)  [label=left:$c$]  {};
    \vertex[fill] (d) at (0.75,2.25)  [label=right:$d$]  {};
    \vertex[fill] (e) at (0,3)  [label=left:$e$]  {};
    \vertex[fill] (1) at (0,3.75)  [label=left:$1$]  {};
   
    \path
        (0) edge (a)
        (a) edge (b)
        (b) edge (c)
        (b) edge (d)
        (c) edge (e)
        (e) edge (1)
        (d) edge (1)

       ;
   \draw[estun] (a) to [bend left=70] (c) ;         
\end{tikzpicture}\]
\caption{Hasse diagram of the bounded trellis in Example~\ref{No_greatest_t-norm}.}\label{Fig03}
\end{figure}
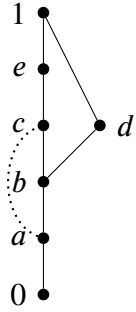

Consider the binary operations $T_{1}$ and $T_{2}$ on~$\mathbb{T}$ in Table~\ref{twotnorms}.
\begin{table}[H]
\begin{center}
\begin{tabular}{|c|c|c|c|c|c|}
\hline 
$T_{1}$ & $a$ & $b$ & $c$ & $d$ & $e$  \\ 
\hline 
$a$ &  $0$ & $0$ & \cg $0$ & \cg $a$ & $0$  \\ 
\hline 
 $b$  & $0$ & \cg $b$ & \cg $b$ & \cg $b$ & \cg $b$  \\ 
\hline 
$c$ & \cg $0$ & \cg $b$ & \cg $c$ & \cg $b$ & \cg $c$ \\ 
\hline 
$d$ & \cg $a$ & \cg $b$ & \cg $b$ & \cg $d$ & \cg $b$  \\ 
\hline 
$e$ & $0$ & \cg $b$ & \cg $c$ & \cg $b$ & \cg $e$  \\ 
\hline 
\end{tabular}
\quad
\begin{tabular}{|c|c|c|c|c|c|}
\hline 
$T_{2}$ & $a$ & $b$ & $c$ & $d$ & $e$\\ 
\hline 
$a$ &  $0$ & $0$ & \cg $0$ & $0$ & \cg $a$\\ 
\hline 
 $b$ & $0$ & \cg $b$ & \cg $b$ & \cg $b$ & \cg $b$\\ 
\hline 
$c$ & \cg $0$ & \cg $b$ & \cg $c$ & \cg $b$ & \cg $c$\\ 
\hline 
$d$ & $0$ & \cg $b$ & \cg $b$ & \cg $d$ & \cg $b$\\ 
\hline 
$e$ &  \cg $a$ & \cg $b$ & \cg $c$ & \cg $b$ & \cg $e$\\ 
\hline 
\end{tabular}
\caption{The two t-norms of Example~\ref{No_greatest_t-norm}.}\label{twotnorms}
\end{center}
\end{table}
These binary operations are maximal t-norms on~$\mathbb{T}$. Indeed, one easily verifies that $T_1$ and $T_2$ are t-norms on~$\mathbb{T}$.
Next, we only prove that $T_1$ is maximal, as the proof for $T_2$ is similar. Let $T'$ be an arbitrary t-norm on~$\mathbb{T}$ such that $T_{1}\unlhd T^{'}$. The proof goes by cases:
\begin{enumerate}[label=(\roman*)]  
\item If ($x,y \in \{0,b,c,d,e,1\}$), ($x=a$ and $y \in \{0,c,d,1\}$) or ($x \in \{0,c,d,1\}$
and $y=a$), then  $T_{1}(x,y)= x \wedge y$. Hence, $T'(x,y)=T_{1}(x,y)$.
\item If ($x=a$ and $y \in \{a,b\}$) or ($x \in \{a,b\}$ and $y=a$), then it follows from the increasingness of $T'$ that $T'(a,a) \unlhd  T'(a,b) \unlhd T'(a,c) \unlhd a \wedge c =0$. Thus, $T'(a,a) =  T'(a,b) =0$. Hence, $T'(x,y)=T_{1}(x,y)$. 
\item If $(x,y)=(a,e)$ or $(x,y)=(e,a)$, then two possible subcases can be considered.
\begin{enumerate}[label=(\roman*)]  
\item[(a)] If $T'(a,e)=0$, then $T'(x,y)=T_{1}(x,y)$. 
\item[(b)] If $T'(a,e)= a$, then the fact that $T'(a,T'(e,d))=T'(a,b)=0$ and $T'(T'(a,e),d)=T'(a,d)=a$,
which contradicts the associativity of $T'$. Hence, this case is impossible.
\end{enumerate}
\end{enumerate}
Thus, $T_{1}$ is a maximal t-norm on~$\mathbb{T}$.
\end{example} 

\section{A generic construction method based on interior operators}\label{Int_based-construction_of_t-norms}
As mentioned several times before, on a proper bounded trellis the meet operation is not a t-norm. 
In this section, inspired by the t-norms $T_{\mathrm D}$ and $T_\mathrm{Z}$, we investigate whether 
we can build a t-norm starting from the meet operation on an interior range of a given $\meet$-semi-trellis. 
First, we extend the notion of an interior operator~\cite{Blyth2005} to the trellis setting. 

\begin{definition}\label{Def:interior-operator}
 Let $\mathbb{T}=(X,\unlhd,\meet)$ be a $\meet$-semi-trellis. A mapping $I \colon X \to X$
 is called an interior operator on $\mathbb{T}$ if it satisfies the following three conditions: 
\begin{enumerate}[label=(\roman*)] 
\item $I(x) \unlhd x$, for any $x\in X$;
\item $I$ is idempotent, {\em i.e.}, $I(I(x)) = I(x)$, for any $x\in X$;
\item $I$ is a meet-homomorphism, {\em i.e.}, $I(x \meet y) = I(x) \meet I(y)$, for any $x,y\in X$.
\end{enumerate}
\end{definition}

\begin{definition}
Let $\mathbb{T}=(X,\unlhd,\meet)$ be a $\meet$-semi-trellis. A subset $R$ of $X$ is called
an interior range of $\mathbb{T}$ if there exists an interior operator $I$ on $\mathbb{T}$ such that
$R=I(X)=\{I(x)\mid x\in X\}$.
\end{definition}

The following proposition is immediate.

\begin{proposition}\label{additional_properties_int}
  Let $\mathbb{T}=(X,\unlhd,\meet)$ be a $\meet$-semi-trellis and $I$ an interior operator on $\mathbb{T}$ with range $R_I$. It holds that: 
 \begin{enumerate}[label=(\roman*),font=\upshape] 
\item if $x \in R_I$, then $I(x)=x$;
\item $I$ is increasing, {\em i.e.}, if $x\unlhd y$, then $I(x) \unlhd I(y)$, for any $x,y\in X$. 
\end{enumerate}
\end{proposition}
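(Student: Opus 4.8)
The plan is to verify the two claims directly from the three defining axioms of an interior operator, exactly as one does for interior operators on a lattice, checking that transitivity of $\unlhd$ is nowhere needed.

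First I would prove (i). Let $x \in R_I$. By definition of $R_I$, there is some $y \in X$ with $x = I(y)$. Applying idempotence (axiom (ii)) gives $I(x) = I(I(y)) = I(y) = x$. This is the entire argument; no order-theoretic reasoning is involved, so transitivity plays no role.

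Next I would prove (ii). Suppose $x \unlhd y$. The trick is to rewrite $\unlhd$ through the meet: since $x \unlhd y$, we have $x \meet y = x$. Applying the meet-homomorphism property (axiom (iii)) yields $I(x) = I(x \meet y) = I(x) \meet I(y)$, which says precisely that $I(x) \unlhd I(y)$ (using the characterization $a \unlhd b \iff a \meet b = a$, valid in any trellis). Again, no transitivity is used; the only ingredient beyond the axioms is the basic meet-order equivalence recalled in the preliminaries.

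I do not anticipate a genuine obstacle here — the statement is flagged as ``immediate'' in the paper — but the one point deserving a word of care is that we are working in a $\meet$-semi-trellis, not a full trellis, so one should make sure that the equivalence $a \unlhd b \iff a \meet b = a$ is available in that setting. It is: the meet is defined as the greatest lower bound, so $a \meet b = a$ holds iff $a$ is a lower bound of $\{a,b\}$ iff $a \unlhd b$, and this does not require joins or transitivity. With that observation in place, both parts follow in one line each, so I would keep the written proof to two short sentences.
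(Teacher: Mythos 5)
Your proof is correct and is exactly the argument the paper has in mind when it calls the proposition ``immediate'': idempotence gives (i), and the equivalence $a \unlhd b \iff a \meet b = a$ (which indeed holds in a $\meet$-semi-trellis without any appeal to transitivity) combined with the meet-homomorphism property gives (ii). Your explicit check that this equivalence survives in the semi-trellis setting is a worthwhile remark, but the route is the standard one.
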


The following proposition discusses the structure of the range of an interior operator on a given $\meet$-semi-trellis. 

\begin{proposition}\label{structure_of_R_I}
 Let $\mathbb{T}=(X,\unlhd,\meet)$ be a $\meet$-semi-trellis and $I$ an interior operator on $\mathbb{T}$ with range $R_I$.
 It holds that:
\begin{enumerate}[label=(\roman*),font=\upshape] 
\item $(R_I,\unlhd,\meet)$ is a $\meet$-sub-trellis of $\mathbb{T}$;
\item if $\mathbb{T}=(X,\unlhd,\meet,0,1)$ is bounded, then $(R_I,\unlhd,\meet,0, I(1))$ is also bounded.  
\end{enumerate}  
\end{proposition}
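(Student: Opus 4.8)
The plan is to verify the two claims directly from the definition of an interior operator (Definition~\ref{Def:interior-operator}) and the basic properties already collected in Proposition~\ref{additional_properties_int}.

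For part (i), I would first check that $R_I$ is closed under $\meet$. Take $x,y \in R_I$; by Proposition~\ref{additional_properties_int}(i) we have $I(x)=x$ and $I(y)=y$, and since $I$ is a meet-homomorphism, $I(x\meet y)=I(x)\meet I(y)=x\meet y$, which exhibits $x\meet y$ as an element of the form $I(z)$, hence $x\meet y\in R_I$. Next I need to confirm that $\meet$, as inherited from $\mathbb{T}$, still computes the infimum \emph{within} $(R_I,\unlhd)$: this is automatic, because $x\meet y$ is a lower bound of $\{x,y\}$ that lies in $R_I$, and any lower bound of $\{x,y\}$ in $R_I$ is in particular a lower bound in $X$, hence $\unlhd\, x\meet y$. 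Thus $(R_I,\unlhd)$ is a psoset in which every two-element subset has a meet, i.e.\ a $\meet$-semi-trellis, and by construction it is a $\meet$-sub-trellis of $\mathbb{T}$.

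For part (ii), assume $\mathbb{T}=(X,\unlhd,\meet,0,1)$ is bounded. Since $0\unlhd x$ for all $x\in X$, applying the increasingness of $I$ (Proposition~\ref{additional_properties_int}(ii)) together with $I(0)\unlhd 0$ gives $I(0)=0$, so $0\in R_I$ and $0$ is the smallest element of $R_I$. For the top, $I(1)\in R_I$ by definition of the range; and for any $x\in R_I$, writing $x=I(x)$ and using $x\unlhd 1$ with the increasingness of $I$ yields $x=I(x)\unlhd I(1)$. Hence $I(1)$ is the greatest element of $(R_I,\unlhd)$, and $(R_I,\unlhd,\meet,0,I(1))$ is bounded.

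I do not anticipate a genuine obstacle here — everything follows mechanically from idempotency, the meet-homomorphism property, and monotonicity. The only point deserving a word of care is the one flagged above: one must not conflate "$\meet$ is closed on $R_I$" with "$\meet$ is the infimum operation of the sub-structure"; these coincide precisely because a lower bound computed in the sub-psoset is still a lower bound in the ambient psoset, so no separate minimality argument is needed. It is also worth noting explicitly that the identity $I(0)=0$ used in part (ii) is exactly the dual of the fact, implicit in Proposition~\ref{structure_of_R_I}(ii), that $I$ need \emph{not} fix $1$, which is why the top element of $R_I$ is $I(1)$ rather than $1$.
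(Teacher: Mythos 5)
Your proof is correct and follows essentially the same route as the paper: closure of $R_I$ under $\meet$ via idempotence plus the meet-homomorphism property ($x\meet y=I(x)\meet I(y)=I(x\meet y)$), and boundedness from $I(0)=0$ (antisymmetry, since $0$ is the bottom of $X$) together with $I(x)\unlhd I(1)$ by increasingness. Your extra remarks — that the restricted $\meet$ is still the infimum inside $(R_I,\unlhd)$, and that $I$ need not fix $1$ — are sound but not required by the paper's definition of a $\meet$-sub-trellis, which asks only for closure under $\meet$.
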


\begin{proof}\noindent
\begin{enumerate}[label=(\roman*)] 
\item Let $x,y \in R_I$. Proposition~\ref{additional_properties_int}(i) and Definition~\ref{Def:interior-operator}(iii) guarantee that $x\meet y=I(x)\meet I(y)=I(x\meet y)$. Hence, $x\meet y\in R_I$. Thus, $(R_I,\unlhd,\meet)$ is a $\meet$-sub-trellis of $\mathbb{T}$.
\item Follows from Proposition~\ref{additional_properties_int}.
\end{enumerate} 
\end{proof}

In case of a bounded $\meet$-semi-trellis, some authors require an interior 
operator to satisfy $I(1)=1$~\cite{DvorakHolcapek2020}.

The following result follows from Proposition~\ref{structure_of_R_I} and Corollary~\ref{subtrellis_implies_sublattice}.
\begin{proposition}\label{Lattice_structure_of_R_I}
 Let $\mathbb{T}=(X,\unlhd,\meet,0,1)$ be a bounded $\meet$-semi-trellis and $I$ an interior operator on $\mathbb{T}$ with range $R_I$. If $R_{I}\subseteq X^\rtr$ or $R_{I}\subseteq X^\ltr$, then $(R_I,\unlhd,\meet,0, I(1))$ is a bounded $\meet$-sub-lattice of $\mathbb{T}$.
\end{proposition}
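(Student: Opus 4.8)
The plan is to combine two facts already established in the excerpt. First, Proposition~\ref{structure_of_R_I} tells us that $(R_I,\unlhd,\meet)$ is a $\meet$-sub-trellis of $\mathbb{T}$, and its bounded version tells us that $(R_I,\unlhd,\meet,0,I(1))$ is a bounded $\meet$-sub-trellis. Second, Corollary~\ref{subtrellis_implies_sublattice}(i) states that a $\meet$-sub-trellis $A$ of $\mathbb{T}$ contained in $X^\rtr$ or $X^\ltr$ is automatically a $\meet$-sub-lattice of $\mathbb{T}$. So the whole proof is really just checking that the hypotheses line up: $R_I$ is a $\meet$-sub-trellis (from the first result) and $R_I\subseteq X^\rtr$ or $R_I\subseteq X^\ltr$ (by assumption), hence $R_I$ is a $\meet$-sub-lattice; the boundedness carries over from Proposition~\ref{structure_of_R_I}(ii).

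Concretely, I would proceed as follows. Let $I$ be an interior operator on the bounded $\meet$-semi-trellis $\mathbb{T}=(X,\unlhd,\meet,0,1)$ with range $R_I$, and assume $R_I\subseteq X^\rtr$ (the case $R_I\subseteq X^\ltr$ being handled identically, invoking part (ii) of Proposition~\ref{left-_right-transitive_elements_are_meet-semi-trellis_join-semi-trellis} / Corollary~\ref{subtrellis_implies_sublattice} as appropriate). By Proposition~\ref{structure_of_R_I}(i), $(R_I,\unlhd,\meet)$ is a $\meet$-sub-trellis of $\mathbb{T}$. Since $R_I\subseteq X^\rtr$, Corollary~\ref{subtrellis_implies_sublattice}(i) applies verbatim and yields that $(R_I,\unlhd,\meet)$ is a $\meet$-sub-lattice of $\mathbb{T}$; equivalently, $\meet$ restricted to $R_I$ is associative on $R_I$, which is exactly the content of Proposition~\ref{join_meet_ass_in_A}(ii) used inside that corollary. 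Finally, Proposition~\ref{structure_of_R_I}(ii) gives that $(R_I,\unlhd,\meet,0,I(1))$ is bounded, with smallest element $0$ and greatest element $I(1)$. Assembling these gives the claim.

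I do not expect any real obstacle here — this proposition is a corollary in the literal sense, and the excerpt even flags it as such ("follows from Proposition~\ref{structure_of_R_I} and Corollary~\ref{subtrellis_implies_sublattice}"). The only point requiring a moment's care is making sure the "meet-only" versions of the cited results are the ones being used: Corollary~\ref{subtrellis_implies_sublattice} is stated for $A$ a subset of $X^\rtr$ or $X^\ltr$ and its part (i) concerns $\meet$-sub-trellises becoming $\meet$-sub-lattices, which is precisely our situation, so no adaptation is needed. If one wanted a fully self-contained argument one could instead cite Proposition~\ref{join_meet_ass_in_A}(ii) directly to get associativity of $\meet$ on $R_I$ (when $R_I\subseteq X^\ltr$) or note that $x,y\in R_I\subseteq X^\rtr$ forces $x\meet y\in\{x,y\}$ along any connecting chain via Proposition~\ref{theoremoftransitiveelement}, but invoking the corollary is the cleanest route.

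\begin{proof}
By Proposition~\ref{structure_of_R_I}(i), $(R_I,\unlhd,\meet)$ is a $\meet$-sub-trellis of $\mathbb{T}$. Since $R_I\subseteq X^\rtr$ or $R_I\subseteq X^\ltr$, Corollary~\ref{subtrellis_implies_sublattice}(i) implies that $(R_I,\unlhd,\meet)$ is a $\meet$-sub-lattice of $\mathbb{T}$. Moreover, by Proposition~\ref{structure_of_R_I}(ii), $(R_I,\unlhd,\meet,0,I(1))$ is bounded. Hence $(R_I,\unlhd,\meet,0,I(1))$ is a bounded $\meet$-sub-lattice of $\mathbb{T}$.
\end{proof}
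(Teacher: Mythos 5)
Your proof is correct and is exactly the argument the paper intends: the paper gives no separate proof, stating only that the result ``follows from Proposition~\ref{structure_of_R_I} and Corollary~\ref{subtrellis_implies_sublattice},'' which is precisely the combination you spell out. Your additional remarks on which meet-only parts of the cited results are invoked are accurate and, if anything, slightly more careful than the paper itself.
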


For a given t-norm $V$ on a bounded $\meet$-sub-trellis $(R_I,\unlhd,\meet,0,I(1))$, we define the following binary operation on~$\mathbb{T}$:
\begin{equation}\label{T^{I,V}}
T^{I,V}(x,y)=
\left\{\begin{array}{ll}
{y}                   & \text {, if } x=1 \\
{x}                   & \text {, if } y=1 \\
{V(I(x),I(y))} & \text {, otherwise.}
 \end{array}\right.
\end{equation}

In view of Proposition~\ref{additional_properties_int}(i), we can express 
$T^{I,V}$ as:
\begin{equation}\label{T^{I,V_details}}
T^{I,V}(x,y)=
\left\{\begin{array}{ll}
{y}                   & \text {, if } x=1 \\
{x}                   & \text {, if } y=1 \\
{V(x,y)} & \text {, if } x,y\in R_I \\
{V(I(x),I(y))}  & \text {, otherwise.}
 \end{array}\right.
\end{equation}

The following theorem shows that $T^{I,V}$ is a t-norm on~$\mathbb{T}$ when $R_I$ is a subset of~$X^\rtr$.

\begin{theorem}\label{Extended_t-norm_based_on_V_and_int-operator}
 Let $\mathbb{T}=(X,\unlhd,\meet,0,1)$ be a bounded $\meet$-semi-trellis, $I$ an interior operator on $\mathbb{T}$ with range $R_I$. If $R_I$ is a subset of $X^\rtr$ and $V$ is a t-norm on the bounded $\meet$-sub-lattice 
 $(R_I,\unlhd,\meet,0,I(1))$, then the binary operation $T^{I,V}$ is a t-norm on~$\mathbb{T}$. 
\end{theorem}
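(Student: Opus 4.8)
The strategy is to verify the four defining properties of a t-norm from Definition~\ref{t-norm} --- neutral element, commutativity, increasingness, associativity --- for the operation $T^{I,V}$ given by \eqref{T^{I,V_details}}. The first two are essentially immediate: $1$ is a neutral element by construction (the first two cases of the definition), and commutativity follows from the commutativity of $V$ together with the symmetry of the case distinction and of $I(x)\meet I(y)$. So the real work lies in increasingness and associativity, and in exploiting two structural facts: by Proposition~\ref{Lattice_structure_of_R_I}, $R_I$ is a bounded $\meet$-sub-lattice of $\mathbb{T}$ (so $V$ really does live on a lattice and is well-behaved), and by Proposition~\ref{theoremoftransitiveelement}(i)(a) every element of $R_I\subseteq X^\rtr$ satisfies $a\lesssim x\Rightarrow a\unlhd x$; the latter is the key device for turning chains of $\unlhd$-steps into single $\unlhd$-relations.

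For increasingness, suppose $x\unlhd x'$ and $y\unlhd y'$; I must show $T^{I,V}(x,y)\unlhd T^{I,V}(x',y')$. The plan is to split according to whether the arguments equal $1$. If neither $x'$ nor $y'$ is $1$, then neither is $x$ nor $y$, so both sides are given by the $V(I(\cdot),I(\cdot))$-branch; since $I$ is increasing (Proposition~\ref{additional_properties_int}(ii)) and $V$ is increasing on $R_I$, we get $V(I(x),I(y))\unlhd V(I(x'),I(y'))$ directly. If, say, $x'=1$ but $x\neq 1$, then the right-hand side equals $y'$, while the left-hand side is $V(I(x),I(y))\unlhd I(x)\meet I(y)\unlhd I(y)\unlhd y\unlhd y'$ --- here I use that $V$ is conjunctive (being a t-norm it satisfies $V(u,v)\unlhd u\meet v$ on $R_I$) and that $I(y)\unlhd y$; but to chain these I need transitivity of $\unlhd$ along this particular path, which is where right-transitivity of $I(x)$ (or of $I(y)$) is invoked via Proposition~\ref{theoremoftransitiveelement}(i)(a). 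The remaining subcase $x=x'=1$ reduces to $y\unlhd y'\Rightarrow y\unlhd y'$. One must be careful to also handle $x'=1=y'$ and mixed situations, but each reduces to one of the above patterns.

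For associativity, I want $T^{I,V}(x,T^{I,V}(y,z)) = T^{I,V}(T^{I,V}(x,y),z)$. Again split on whether any of $x,y,z$ equals $1$: if one of them is $1$, say $z=1$, both sides collapse to $T^{I,V}(x,y)$ using the neutral-element clause, and similarly in the other cases. So assume $x,y,z\neq 1$. The heart of the argument is the identity
\begin{equation*}
T^{I,V}(x,T^{I,V}(y,z)) = V\bigl(I(x),\,I(V(I(y),I(z)))\bigr) = V\bigl(I(x),\,V(I(y),I(z))\bigr),
\end{equation*}
where the last step uses that $V(I(y),I(z))\in R_I$ (because $R_I$ is a $\meet$-sub-trellis, hence closed, so $I$ fixes it by Proposition~\ref{additional_properties_int}(i)) --- provided the inner value $T^{I,V}(y,z)$ is not $1$, which I must check (it cannot be, as $V$ maps into $R_I$ and $I(1)$ need not be $1$; if $I(1)=1$ a small extra check is needed, but then $V(I(y),I(z))=1$ forces, by conjunctivity, $I(y)=I(z)=1$, an edge case that can be dispatched separately, or one simply notes $1\notin R_I$ unless $I(1)=1$ and treats that case by hand). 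Symmetrically the right-hand side equals $V\bigl(V(I(x),I(y)),I(z)\bigr)$. These two are equal by the associativity of $V$ on $R_I$. The main obstacle, and the step deserving the most care, is precisely this bookkeeping around when intermediate values land back in $R_I$ (so that the inner $I$ can be dropped) and when an argument can be $1$; once those are pinned down, everything reduces to the t-norm axioms for $V$ and to the right-transitivity lemma. I would organize the write-up as: (1) neutral element and commutativity; (2) increasingness by the case split above; (3) a short lemma that $V(I(a),I(b))\in R_I$ and hence $I(V(I(a),I(b)))=V(I(a),I(b))$; (4) associativity using (3) and the case split.
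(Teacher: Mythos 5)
Your proposal is correct and follows essentially the same route as the paper's proof: neutrality and commutativity are immediate, increasingness is handled by the same case split with the crucial step being that $V(I(x),I(y))\in R_I\subseteq X^{\rtr}$ so that the chain of $\unlhd$-steps up to $y'$ collapses via Proposition~\ref{theoremoftransitiveelement}(i)(a), and associativity reduces to that of $V$ once one notes that $I$ fixes $V(I(y),I(z))\in R_I$. Your extra bookkeeping on whether the intermediate value $T^{I,V}(y,z)$ can equal $1$ is a point the paper glosses over, and your treatment of it (via conjunctivity of $V$ and antisymmetry) is sound.
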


\begin{proof}
Obviously, $T^{I,V}$ is commutative and has $1$ as neutral element. 

Next, we prove that $T^{I,V}$ is increasing. Let  $x,y,z,t \in X $ such that $x \unlhd y$ and $z \unlhd t$. 
We consider the following two cases:
\begin{enumerate}[label=(\roman*)] 
\item First case: $x=1$, and hence $y=1$.
Then $T^{I,V}(x,z) = z$ and $T^{I,V}(y,t)=t$. Hence, $T^{I,V}(x,z) \unlhd T^{I,V}(y,t)$.
The same holds when $z=1$.
\item Second case: $x\neq 1$ and $z\neq 1$. Two subcases can be distinguished: ($y=1$ or $t=1$) or ($y\neq 1$ and $t\neq 1$). 
\begin{enumerate}[label=(\roman*)]  
\item[(a)]  First subcase: $y=1$. Since $T^{I,V}(x,z) = V(I(x),I(z))$, it follows from the increasingness of $I$ on $\mathbb{T}$ (Proposition~\ref{additional_properties_int}(ii))  and the increasingness of $V$ on $R_I$ that 
$T^{I,V}(x,z) = V(I(x),I(z))\unlhd  V(I(y),I(t))$. If $y=1$, then $T^{I,V}(x,z) \unlhd V(I(1),I(t))=I(t)\unlhd t= T^{I,V}(1,t)=T^{I,V}(y,t)$. The fact that $T^{I,V}(x,z)=V(I(x),I(z))\in R_I\subseteq X^\rtr$ implies that $T^{I,V}(x,z) \unlhd T^{I,V}(y,t)$. The proof is similar for $t=1$. 
\item[(b)] Second subcase: $y\neq 1$ and $t\neq 1$. Since $I$ is increasing on $\mathbb{T}$ and $V$ is increasing on $R_I$, it follows that $T^{I,V}(x,z) = V(I(x),I(z))\unlhd  V(I(y),I(t))= T^{I,V}(y,t)$. 
\end{enumerate}
\end{enumerate}

Therefore, $T^{I,V}$ is increasing. 

Next, we prove that $T^{I,V}$ is associative. Let $x,y,z \in X$, then we need to prove that 
\[T^{I,V}(x,T^{I,V}(y,z))= T^{I,V}(T^{I,V}(x,y),z)\,.\]
We consider the following two cases: 
 \begin{enumerate}[label=(\roman*)]  
\item If $1\in\{x,y,z\}$, then the equality trivially holds.
\item If $1\notin \{x,y,z\}$, then 
\[T^{I,V}(x,T^{I,V}(y,z))= V(I(x),V(I(y),I(z))\,.\]

The associativity of $V$ on $R_I$ guarantees that 
\[V(I(x),V(I(y),I(z)))=V(V(I(x),I(y)),I(z))\,.\]
Hence, 
\[T^{I,V}(x,T^{I,V}(y,z))= T^{I,V}(T^{I,V}(x,y),z)\,.\]
\end{enumerate}
Thus, $T^{I,V}$ is associative. Therefore, $T^{I,V}$ is a t-norm on~$\mathbb{T}$.
\end{proof}

Considering as t-norm $V$ the meet operation restricted to $R_I$ leads to the following corollary.

\begin{corollary}\label{t-norm_based_on_int-operator}
 Let $\mathbb{T}=(X,\unlhd,\meet,0,1)$ be a bounded $\meet$-semi-trellis and $I$ an interior operator on $\mathbb{T}$ with range $R_I$.  If $R_I$ is a subset of $X^\rtr$, then the binary operation $T^{I}$ on $\mathbb{T}$ defined as:  
\begin{equation}\label{T^{I_details}}
T^{I}(x,y)=
\left\{\begin{array}{ll}
{y}                   & \text {, if } x=1 \\
{x}                   & \text {, if } y=1 \\
{x\meet y} & \text {, if } x,y\in R_I \\
{I(x)\meet I(y)} & \text {, otherwise.}
 \end{array}\right.
\end{equation}
is a t-norm on~$\mathbb{T}$.
\end{corollary}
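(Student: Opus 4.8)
The plan is to obtain this corollary as a direct specialization of Theorem~\ref{Extended_t-norm_based_on_V_and_int-operator}, taking as t-norm $V$ the meet operation restricted to $R_I$. Thus the only thing to check is that $\meet$, restricted to the bounded $\meet$-sub-lattice $(R_I,\unlhd,\meet,0,I(1))$, is genuinely a t-norm on that structure, and that the resulting operation $T^{I,V}$ coincides with the formula for $T^I$ in~\eqref{T^{I_details}}.

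First I would invoke Proposition~\ref{Lattice_structure_of_R_I}: since $R_I\subseteq X^\rtr$, the structure $(R_I,\unlhd,\meet,0,I(1))$ is a bounded $\meet$-sub-lattice of $\mathbb{T}$; in particular $\unlhd$ is transitive on $R_I$. On this bounded lattice the meet operation is commutative, and by Corollary~\ref{Increasing_associative_means_transitivity} (equivalently, Proposition~\ref{meet-join_Increasing_equi_transitive} together with Theorem~\ref{Transitivity_equivalent_to_Meet_join_ass}) it is both increasing and associative. It remains to check that $I(1)$ is a neutral element: for any $x\in R_I$ we have $x\unlhd I(1)$, since $I(1)$ is the greatest element of $R_I$, and therefore $x\meet I(1)=x$; this meet may be read off in $\mathbb{T}$ because $R_I$ is a $\meet$-sub-trellis of $\mathbb{T}$ (Proposition~\ref{structure_of_R_I}(i)). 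Hence $V:=\meet|_{R_I}$ is a t-norm on $(R_I,\unlhd,\meet,0,I(1))$.

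Applying Theorem~\ref{Extended_t-norm_based_on_V_and_int-operator} to this $V$ then yields that $T^{I,V}$ is a t-norm on~$\mathbb{T}$. Substituting $V=\meet|_{R_I}$ into~\eqref{T^{I,V_details}} reproduces~\eqref{T^{I_details}}: the case $x,y\in R_I$ gives $V(x,y)=x\meet y$, and the remaining case gives $V(I(x),I(y))=I(x)\meet I(y)$, which is meaningful because $I(x),I(y)\in R_I$. Therefore $T^I=T^{I,V}$ is a t-norm on~$\mathbb{T}$. There is no genuine obstacle here, the substance of the corollary being carried entirely by the theorem; the one point deserving a moment's attention is that the neutral element of the restricted meet is $I(1)$ rather than $1$, and that its increasingness and associativity are available only because $\unlhd$ is transitive on $R_I$, which is exactly what $R_I\subseteq X^\rtr$ buys us via Proposition~\ref{Lattice_structure_of_R_I}.
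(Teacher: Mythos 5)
Your proposal is correct and follows exactly the paper's route: the paper derives this corollary in one line by taking $V=\meet|_{R_I}$ in Theorem~\ref{Extended_t-norm_based_on_V_and_int-operator}, and you do the same, merely spelling out the routine verification (via Proposition~\ref{Lattice_structure_of_R_I} and transitivity of $\unlhd$ on $R_I$) that the restricted meet is indeed a t-norm on $(R_I,\unlhd,\meet,0,I(1))$ with neutral element $I(1)$.
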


The following proposition shows that $T^{I}$ is a meet-preserving t-norm, {\em i.e.}, its partial mappings are meet-homomorphisms. 

\begin{proposition}\label{T_int_is_meet_preserving}
Let $\mathbb{T}=(X,\unlhd,\meet,0,1)$ be a bounded $\meet$-semi-trellis
and $I$ an interior operator on $\mathbb{T}$ with range $R_I\subseteq X^\rtr$. Then the t-norm $T^{I}$ is meet-preserving, {\em i.e.}, 
\[T^{I}(x,y\meet z)=T^{I}(x,y)\meet T^{I}(x,z)\,,
\]
for any $x,y,z\in X$.
\end{proposition}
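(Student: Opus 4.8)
The plan is to fix the first argument $x$ and verify that the partial mapping $w\mapsto T^{I}(x,w)$ is a meet-homomorphism, arguing by cases on which of $x$, $y$, $z$ equal $1$. The case $x=1$ is immediate, since then $T^{I}(1,w)=w$ for every $w$ and the claimed identity is merely $y\meet z=y\meet z$. Before treating $x\neq 1$, I would record three facts. First, $y\meet z=1$ forces $y=z=1$, because $1\unlhd y\meet z$ would make $1$ a lower bound of $\{y,z\}$; dually, $1\meet w=w$ for every $w$. Second, by Proposition~\ref{structure_of_R_I}(i) the range $R_I$ is a $\meet$-sub-trellis of $\mathbb{T}$, so any meet of elements of $R_I$ again lies in $R_I\subseteq X^\rtr$. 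Third, since $R_I\subseteq X^\rtr$, Proposition~\ref{Lattice_structure_of_R_I} guarantees that $\meet$ is associative on $R_I$; hence inside $R_I$ iterated meets may be freely reordered and re-bracketed, and idempotency of $\meet$ is available.

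Assume now $x\neq 1$. Using that $I$ restricts to the identity on $R_I$ (Proposition~\ref{additional_properties_int}(i)), the defining formula~\eqref{T^{I_details}} of $T^{I}$ simplifies, for $w\neq 1$, to $T^{I}(x,w)=I(x)\meet I(w)$, irrespective of whether $x,w\in R_I$; and $T^{I}(x,1)=x$. I then split into three subcases. If $y\neq 1$ and $z\neq 1$, then $y\meet z\neq 1$ by the first recorded fact, so the left-hand side equals $I(x)\meet I(y\meet z)=I(x)\meet\bigl(I(y)\meet I(z)\bigr)$ because $I$ is a meet-homomorphism, whereas the right-hand side equals $\bigl(I(x)\meet I(y)\bigr)\meet\bigl(I(x)\meet I(z)\bigr)$; both collapse to $I(x)\meet I(y)\meet I(z)$ by the associativity, commutativity and idempotency of $\meet$ on $R_I$. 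If $y=z=1$, both sides equal $x$.

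The remaining subcase, where exactly one of $y,z$ equals $1$, is the delicate one, and I expect it to be the only genuine obstacle. Say $y=1$ and $z\neq 1$. Then $y\meet z=z\neq 1$, so the left-hand side is $T^{I}(x,z)=I(x)\meet I(z)$, while the right-hand side is $T^{I}(x,1)\meet T^{I}(x,z)=x\meet\bigl(I(x)\meet I(z)\bigr)$; these agree exactly when $I(x)\meet I(z)\unlhd x$. Transitivity being unavailable, the chain $I(x)\meet I(z)\unlhd I(x)\unlhd x$ does not by itself suffice; but $I(x)\meet I(z)\in R_I\subseteq X^\rtr$ is right-transitive, and applying right-transitivity of $I(x)\meet I(z)$ to that very chain yields $I(x)\meet I(z)\unlhd x$, i.e.\ $x\meet\bigl(I(x)\meet I(z)\bigr)=I(x)\meet I(z)$ by commutativity. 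This closes all cases; apart from this boundary clash between the neutral-element clause and the general clause of $T^{I}$, the verification is the routine lattice-style computation, justified by $R_I$ lying inside $X^\rtr$.
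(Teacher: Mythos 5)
Your proof is correct and amounts to a fully detailed version of the paper's one-line justification (which invokes only the meet-homomorphism property of $I$ and associativity); the routine subcases reduce, as you say, to reordering and collapsing iterated meets inside the $\meet$-sub-lattice $R_I$. The one point of genuine substance beyond the paper's sketch is exactly the boundary subcase you isolate ($x\neq 1$ with exactly one of $y,z$ equal to $1$), where the needed inequality $I(x)\meet I(z)\unlhd x$ does not follow from the chain $I(x)\meet I(z)\unlhd I(x)\unlhd x$ in the absence of transitivity, and your appeal to the right-transitivity of $I(x)\meet I(z)\in R_I\subseteq X^\rtr$ is precisely the correct fix.
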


\begin{proof}
The proof follows from the fact that $I$ is a meet-homomorphism and the associativity of $T^{I}$.
\end{proof}

\section{A particular family of interior operators}\label{T-norms_based_on_lambda_A}
In this section, we consider a particular family of interior operators on a bounded trellis.
For a subset $A$ of a given bounded trellis $\mathbb{T}=(X,\unlhd,\meet,\join,0,1)$, we define the mapping $\lambda_A \colon X \to X$:
\begin{equation}\label{lambda^alpha}
\lambda_A(x)=\bigvee \{ a \in A \mid  a \unlhd x \}=\bigvee(A\,\cap\downarrow x)\,.
\end{equation}
In general, this mapping is not well defined since the supremum $\bigvee(A\,\cap\downarrow x)$ does not necessarily exist. 
However, if $A$ is a finite subset of $X^\rtr$ containing $0$, then Proposition~\ref{Closeness_of_join_for_subtrellis}
guarantees that it is well defined. 

The following proposition shows that $\lambda_A$ is an interior operator. 

\begin{proposition}\label{lambda_is_an_interior_operator}
 Let $\mathbb{T}=(X,\unlhd,\meet,\join,0,1)$ be a bounded trellis and $A$ a finite subset of $X^\rtr$ containing $0$. If $A$ is a sub-trellis of~$\mathbb{T}$, then $\lambda_A$ is an interior operator on~$\mathbb{T}$ and $R_{\lambda_A}=A$.
\end{proposition}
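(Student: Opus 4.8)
The plan is to verify the three defining conditions of an interior operator from Definition~\ref{Def:interior-operator} for the mapping $\lambda_A$, and then identify its range. Throughout, I would keep in mind that $A$ is a finite subset of $X^\rtr$ containing $0$ which is a sub-trellis of~$\mathbb{T}$; by Proposition~\ref{A_subtrellis_on_pseudo-chain}--type reasoning and especially Proposition~\ref{Closeness_of_join_for_subtrellis}(i), the supremum $\bigvee(A\cap\mathop{\downarrow}x)$ exists and lies in $A$ (the set $A\cap\mathop{\downarrow}x$ is nonempty since $0\in A$ and $0\unlhd x$), so $\lambda_A$ is well defined and already $\lambda_A(x)\in A$ for every $x$.

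First I would check condition (i), $\lambda_A(x)\unlhd x$: every $a\in A\cap\mathop{\downarrow}x$ satisfies $a\unlhd x$, so $x$ is an upper bound of $A\cap\mathop{\downarrow}x$, and hence the supremum $\lambda_A(x)$, being the \emph{smallest} upper bound, satisfies $\lambda_A(x)\unlhd x$. Next, idempotency (ii): since $\lambda_A(x)\in A$ and $\lambda_A(x)\unlhd\lambda_A(x)$, we have $\lambda_A(x)\in A\cap\mathop{\downarrow}\lambda_A(x)$; conversely every element of $A\cap\mathop{\downarrow}\lambda_A(x)$ is $\unlhd\lambda_A(x)$, so $\lambda_A(x)$ is the greatest element of this set and therefore its supremum, giving $\lambda_A(\lambda_A(x))=\lambda_A(x)$. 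The same argument shows more generally that $\lambda_A(a)=a$ for every $a\in A$, which together with $\lambda_A(x)\in A$ for all $x$ will yield $R_{\lambda_A}=A$.

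The main work is condition (iii), that $\lambda_A$ is a meet-homomorphism: $\lambda_A(x\meet y)=\lambda_A(x)\meet\lambda_A(y)$. For the inclusion $\lambda_A(x\meet y)\unlhd\lambda_A(x)\meet\lambda_A(y)$, I would argue that if $a\in A$ and $a\unlhd x\meet y$, then $a\unlhd x\meet y\unlhd x$ and since $a\in X^\rtr$ this gives $a\unlhd x$ (this is where right-transitivity is essential — the intermediate step through $x\meet y$ is not automatically composable without it), and similarly $a\unlhd y$; hence $a\in A\cap\mathop{\downarrow}x$ and $a\in A\cap\mathop{\downarrow}y$, so $a\unlhd\lambda_A(x)$ and $a\unlhd\lambda_A(y)$, giving $a$ as a lower bound of $\{\lambda_A(x),\lambda_A(y)\}$, hence $a\unlhd\lambda_A(x)\meet\lambda_A(y)$; taking the supremum over all such $a$ yields the inclusion. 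For the reverse inclusion $\lambda_A(x)\meet\lambda_A(y)\unlhd\lambda_A(x\meet y)$, I would use that $\lambda_A(x),\lambda_A(y)\in A$ and that $A$ is a \emph{sub-trellis}, so $\lambda_A(x)\meet\lambda_A(y)\in A$; moreover $\lambda_A(x)\meet\lambda_A(y)\unlhd\lambda_A(x)\unlhd x$ and, again invoking right-transitivity (since $\lambda_A(x)\meet\lambda_A(y)\in A\subseteq X^\rtr$), $\lambda_A(x)\meet\lambda_A(y)\unlhd x$; likewise $\unlhd y$; hence $\lambda_A(x)\meet\lambda_A(y)$ is a lower bound of $\{x,y\}$, so $\lambda_A(x)\meet\lambda_A(y)\unlhd x\meet y$, and being an element of $A$ below $x\meet y$ it is one of the elements whose supremum defines $\lambda_A(x\meet y)$, hence $\lambda_A(x)\meet\lambda_A(y)\unlhd\lambda_A(x\meet y)$. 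Antisymmetry of $\unlhd$ then closes the equality.

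I expect the main obstacle to be the careful handling of right-transitivity at each place where a two-step relation $a\unlhd x\meet y\unlhd x$ (or $b\unlhd\lambda_A(x)\unlhd x$) needs to be collapsed to $a\unlhd x$; without transitivity of $\unlhd$ in general, this step is illegitimate, and it is precisely the hypothesis $A\subseteq X^\rtr$ (via Proposition~\ref{theoremoftransitiveelement}(i)) that licenses it — but one must be vigilant that the element doing the ``jumping'' is indeed the one known to be right-transitive, i.e.~the element of $A$, and not the arbitrary element $x$ or $y$. A secondary point requiring care is the appeal to $A$ being a sub-trellis (not merely a $\meet$-sub-trellis) to guarantee $\lambda_A(x)\meet\lambda_A(y)\in A$ in the reverse inclusion; this is where that hypothesis is used. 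Everything else is bookkeeping with suprema of finite subsets of $X^\rtr$, which behave well by Proposition~\ref{Closeness_of_join_for_subtrellis}.
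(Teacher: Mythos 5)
Your proof is correct and follows essentially the same route as the paper: well-definedness via Proposition~\ref{Closeness_of_join_for_subtrellis}, idempotency from $\lambda_A(x)\in A$, and the meet-homomorphism property by proving both inequalities with right-transitivity of elements of $A$ doing the work. The only difference is cosmetic: where the paper appeals to the increasingness of $\lambda_A$ (which it does not separately establish), you argue element-wise with the supremum, which makes your version slightly more self-contained.
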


 \begin{proof}\noindent
 \begin{enumerate}[label=(\roman*)] 
\item Obviously, $\lambda_A(x) \unlhd x$, for any $x\in X$.
\item Let $x \in X$. From Proposition~\ref{Closeness_of_join_for_subtrellis}, it follows that $\lambda_A(x) \in A$. Hence, $\lambda_A(\lambda_A(x))= \lambda_A (x)$.  Thus, $\lambda_A$ is idempotent.
\item Let $x,y \in X$, then $\lambda_A(x),\lambda_A(y)\in A$ and  $\lambda_A(x) \meet \lambda_A(y)\in A$. Since $A\subseteq X^\rtr$, $\lambda_A(x) \meet \lambda_A
(y)\unlhd \lambda_A(x) \unlhd  x$ implies $\lambda_A(x) \meet \lambda_A(y) \unlhd  x$. Similarly, $\lambda_A(x) \meet \lambda_A(y) \unlhd  y$. Hence, $\lambda_A
(x) \meet \lambda_A(y) \unlhd  x\meet y$. The fact that $\lambda_A
$ is idempotent and increasing implies that  $\lambda_A
(x) \meet \lambda_A(y)=\lambda_A
(\lambda_A(x) \meet \lambda_A(y)) \unlhd \lambda_A (x\meet y)$. The increasingness of $\lambda_A$ also implies that $\lambda_A
(x\meet y)  \unlhd \lambda_A
(x) \meet \lambda_A
(y)$. Hence, $\lambda_A
(x \meet y)=\lambda_A
(x) \meet \lambda_A
(y)$. Thus, $\lambda_A
$ is a meet-homomorphism.
\end{enumerate}
Obviously, $\lambda_A(x)=x$, for any $x\in A$. Hence, $R_{\lambda_A}=A$.
\end{proof}
% the order of the items was different from the order in the definition; the last fact was also not explicitly proven
% Lem: In the proove of $\lambda_A $ is a meet-homomorphism, I used the fact that $\lambda_A$ is idempotent. For this reason I changed the order.Now, I changed the order in the definition of interior operator.

Note that due to Proposition~\ref{Lattice_structure_of_R_I},
under the conditions of Proposition~\ref{lambda_is_an_interior_operator}, the sub-trellis $A$ is a bounded $\meet$-sub-lattice.

Combining Theorem~\ref{Extended_t-norm_based_on_V_and_int-operator} and Proposition~\ref{lambda_is_an_interior_operator}, we obtain the following result showing that $T^{\lambda_A,V}$ is a t-norm on~$\mathbb{T}$ when $A$ is a sub-trellis of~$\mathbb{T}$. Note that we use the notation $T^{[A,V]}$ instead of $T^{\lambda_A,V}$.

\begin{corollary}\label{T_A,V_t-norm}
 Let $\mathbb{T}=(X,\unlhd,\meet,\join,0,1)$ be a bounded trellis and $A$ a finite subset of $X^\rtr$ containing $0$. If $A$ 
 is a sub-trellis of~$\mathbb{T}$ and $V$ is a t-norm on the bounded $\meet$-sub-lattice $(A,\unlhd,\meet,\join,0,\lambda_A(1))$, 
 then the binary operation $T^{[A,V]}$ on $\mathbb{T}$ defined as:
 \begin{equation}\label{T^{A,V}_in_detail}
T^{[A,V]}(x,y)=
\left\{\begin{array}{ll}
{y}                               & \text {, if } x=1 \\
{x}                               & \text {, if } y=1 \\
{V(x,y)}                       & \text {, if } x,y\in A \\
{V(\lambda_A(x) ,\lambda_A(y))} & \text {, otherwise.}
 \end{array}\right.
\end{equation}
is a t-norm on~$\mathbb{T}$.
 \end{corollary}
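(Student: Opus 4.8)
The plan is to verify that the four hypotheses of Theorem~\ref{Extended_t-norm_based_on_V_and_int-operator} are satisfied when the interior operator is taken to be $\lambda_A$, and then simply invoke that theorem together with the identification of the range of $\lambda_A$. Concretely, I would argue as follows. Since $A$ is assumed to be a finite subset of $X^\rtr$ containing $0$, the mapping $\lambda_A$ is well defined by the remarks following~\eqref{lambda^alpha} (which rely on Proposition~\ref{Closeness_of_join_for_subtrellis}). Because $A$ is moreover assumed to be a sub-trellis of~$\mathbb{T}$, Proposition~\ref{lambda_is_an_interior_operator} tells us that $\lambda_A$ is an interior operator on the $\meet$-semi-trellis $(X,\unlhd,\meet,0,1)$ and that $R_{\lambda_A}=A$. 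Hence the range of $\lambda_A$ is $A$, which is by hypothesis a subset of $X^\rtr$, so the structural condition ``$R_I\subseteq X^\rtr$'' of Theorem~\ref{Extended_t-norm_based_on_V_and_int-operator} holds with $I=\lambda_A$ and $R_I=A$.

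Next I would observe that, by Proposition~\ref{Lattice_structure_of_R_I} (as noted in the sentence following Proposition~\ref{lambda_is_an_interior_operator}), the bounded $\meet$-sub-trellis $(A,\unlhd,\meet,0,\lambda_A(1))$ is in fact a bounded $\meet$-sub-lattice of~$\mathbb{T}$, so it makes sense to speak of a t-norm $V$ on it; this is exactly the remaining hypothesis of Corollary~\ref{T_A,V_t-norm}. Thus all hypotheses of Theorem~\ref{Extended_t-norm_based_on_V_and_int-operator} are met with $I=\lambda_A$, and we conclude that $T^{\lambda_A,V}$ is a t-norm on~$\mathbb{T}$. Finally, I would unwind the definition: substituting $I=\lambda_A$ into~\eqref{T^{I,V_details}} and using $R_{\lambda_A}=A$ together with the fact that $\lambda_A(x)=x$ for $x\in A$ (Proposition~\ref{additional_properties_int}(i)), we see that $T^{\lambda_A,V}$ is precisely the operation $T^{[A,V]}$ displayed in~\eqref{T^{A,V}_in_detail}. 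This completes the argument.

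Since this corollary is an immediate specialization, there is no genuine obstacle; the only point requiring a little care is to make sure each cited result is applied with the correct hypotheses --- in particular, that the finiteness of $A$, the containment $0\in A\subseteq X^\rtr$, and the sub-trellis property are all genuinely needed (finiteness and $0\in A$ for well-definedness of $\lambda_A$, the sub-trellis property for $\lambda_A$ to be a meet-homomorphism with range exactly $A$, and $A\subseteq X^\rtr$ both for well-definedness and for the right-transitivity condition in Theorem~\ref{Extended_t-norm_based_on_V_and_int-operator}). It is also worth a one-line remark that the notation $T^{[A,V]}$ is merely shorthand for $T^{\lambda_A,V}$, so that the displayed formula~\eqref{T^{A,V}_in_detail} agrees term-by-term with~\eqref{T^{I,V_details}} once the substitution $I=\lambda_A$ is made.
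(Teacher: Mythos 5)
Your proposal is correct and takes essentially the same route as the paper, which obtains this corollary precisely by combining Theorem~\ref{Extended_t-norm_based_on_V_and_int-operator} with Proposition~\ref{lambda_is_an_interior_operator} and the identification $R_{\lambda_A}=A$, offering no further detail. Your explicit bookkeeping of which hypotheses (finiteness, $0\in A\subseteq X^\rtr$, the sub-trellis property) are needed where is a faithful, slightly more detailed rendering of that same argument.
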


In the same line, combining Corollary~\ref{t-norm_based_on_int-operator} and Proposition~\ref{lambda_is_an_interior_operator}, we obtain the following result showing that $T^{\lambda_A}$ is a t-norm on~$\mathbb{T}$ when $A$ is a sub-trellis of~$\mathbb{T}$. Note that 
we use the notation $T^{[A]}$ instead of $T^{\lambda_A}$. 

% Yes, I added $A$ is a bounded sub-trellis of~$\mathbb{T}$, because it is automatically added by $\lambda_A(1)$.
 % sorry, I have deleted 'bounded' again to be in line with the theorem. Line 1182 gives enough explanation. It is indeed automatically fulfilled (even sublattice).
 % Lem: Yes, it is good now.

\begin{corollary}\label{T_A_t-norm}
 Let $\mathbb{T}=(X,\unlhd,\meet,\join,0,1)$ be a bounded trellis and $A$ a finite subset of $X^\rtr$ containing $0$. If $A$ is a sub-trellis of~$\mathbb{T}$, then the binary operation $T^{[A]}$ on~$\mathbb{T}$ defined as: 
  \begin{equation}\label{T^{A}_in_detail}
T^{[A]}(x,y)=
\left\{\begin{array}{ll}
{y}                               & \text {, if } x=1 \\
{x}                               & \text {, if } y=1 \\
{x \meet y}                       & \text {, if } x,y\in A \\
{\lambda_A(x) \meet \lambda_A(y)} & \text {, otherwise.}
 \end{array}\right.
\end{equation}
is a t-norm on~$\mathbb{T}$.
 \end{corollary}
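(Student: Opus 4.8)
The plan is to derive this statement as a direct specialization of the results already in hand, with essentially no new computation. First I would apply Proposition~\ref{lambda_is_an_interior_operator}: because $A$ is a finite subset of $X^\rtr$ that contains $0$ and is a sub-trellis of~$\mathbb{T}$, the mapping $\lambda_A$ is a well-defined interior operator on the bounded $\meet$-semi-trellis $(X,\unlhd,\meet,0,1)$, and its range is exactly $R_{\lambda_A}=A$. In particular $R_{\lambda_A}=A\subseteq X^\rtr$, so the hypothesis of Corollary~\ref{t-norm_based_on_int-operator} is met with the choice $I:=\lambda_A$.

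Next I would invoke Corollary~\ref{t-norm_based_on_int-operator}, which then guarantees that the binary operation $T^{\lambda_A}$ given by~\eqref{T^{I_details}} is a t-norm on~$\mathbb{T}$. It remains only to observe that $T^{\lambda_A}$ is literally the operation $T^{[A]}$ of~\eqref{T^{A}_in_detail}: the two case definitions differ only in that the third clause of~\eqref{T^{I_details}} is triggered by the condition $x,y\in R_I$, whereas that of~\eqref{T^{A}_in_detail} is triggered by $x,y\in A$; since $R_{\lambda_A}=A$ these conditions coincide, and all other clauses agree verbatim once $I$ is replaced by $\lambda_A$. Hence $T^{[A]}=T^{\lambda_A}$ is a t-norm on~$\mathbb{T}$.

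I do not anticipate a genuine obstacle, since all the substantive work has been pushed into the two cited results: Proposition~\ref{lambda_is_an_interior_operator} already verifies the interior-operator axioms for $\lambda_A$ (notably the meet-homomorphism property, where right-transitivity of the elements of $A$ is crucially used), while Corollary~\ref{t-norm_based_on_int-operator} --- itself resting on Theorem~\ref{Extended_t-norm_based_on_V_and_int-operator} --- already verifies increasingness and associativity of the extended operation. The one point I would make explicit for the reader is that, $\lambda_A$ being idempotent with range $A$, one has $\lambda_A(x)=x$ for every $x\in A$ (Proposition~\ref{additional_properties_int}(i)); this is precisely what makes the third clauses of the two formulas match. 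Should a self-contained argument be preferred, one could instead inline the proofs of Theorem~\ref{Extended_t-norm_based_on_V_and_int-operator} and Proposition~\ref{lambda_is_an_interior_operator}, but the chaining above is the cleaner route.
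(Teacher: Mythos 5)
Your proposal is correct and follows exactly the paper's route: the corollary is obtained by combining Proposition~\ref{lambda_is_an_interior_operator} (establishing that $\lambda_A$ is an interior operator with range $A$) with Corollary~\ref{t-norm_based_on_int-operator}, after noting that $T^{[A]}=T^{\lambda_A}$. Nothing further is needed.
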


\begin{remark}\label{co-atoms_subtrellis_and_T=meet}
We list some basic observations:
\begin{enumerate}[label=(\roman*),font=\upshape] 
\item If $A=\{0,i\}$, with $i$ a co-atom of~$\mathbb{T}$, then $T^{[A]}=T_i$.
\item If $A=\{0,1\}$, then $T^{[A]}=T_\D$.
\item If $\mathbb{T}=(X,\unlhd,\meet,\join,0,1)$ is a bounded lattice, then $T^{[X^\alpha]}=\meet$, for any $\alpha \in\{\ass,\meetAss,\joinAss,\tr,\rtr\}$.
\end{enumerate}
\end{remark}

Using Propositions~\ref{Specific_sets_inclusion} and~\ref{X^tr=X^ass_X^dis_sublattice}, and Corollary~\ref{T_A_t-norm}, we obtain the following corollaries.

\begin{corollary}\label{T_alpha_t-norms}
 Let $\mathbb{T}=(X,\unlhd,\meet,\join,0,1)$ be a bounded trellis and~$\alpha\in\{\dis,\ass,\meetAss,$ $\joinAss,\tr,\rtr\}$. If $X^\alpha$ is a finite sub-trellis of~$\mathbb{T}$, then the binary operation $T^{[X^\alpha]}$ is a t-norm on~$\mathbb{T}$.
 \end{corollary}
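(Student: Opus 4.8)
The plan is to deduce Corollary~\ref{T_alpha_t-norms} directly from Corollary~\ref{T_A_t-norm} by verifying, for each admissible $\alpha$, that $X^\alpha$ satisfies the hypotheses imposed there on the set $A$: namely that $X^\alpha$ is a finite subset of $X^\rtr$ that contains $0$ and is a sub-trellis of~$\mathbb{T}$. Finiteness is assumed outright in the statement. Membership of $0$ in $X^\alpha$ is immediate: the smallest element $0$ of a bounded trellis is (right-, left-, middle-)transitive and in fact $\meet$- and $\join$-associative and distributive, since any condition of the form $0\unlhd x\unlhd y\Rightarrow 0\unlhd y$ or $(0\meet y)\meet z=0\meet(y\meet z)$ holds trivially because $0$ is the bottom; alternatively one invokes the observation in the excerpt that $(X^\alpha,\unlhd,0,1)$ is a bounded psoset, which already records $0\in X^\alpha$. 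Containment $X^\alpha\subseteq X^\rtr$ is exactly Proposition~\ref{Specific_sets_inclusion}(i) for $\alpha\in\{\dis,\ass,\meetAss,\tr\}$ and is trivial for $\alpha=\rtr$; note $\joinAss$ is deliberately absent from the list here (it need not be contained in $X^\rtr$), and the corollary respects this.

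The only genuine content is the sub-trellis hypothesis, and here the argument splits by~$\alpha$. For $\alpha\in\{\dis,\ass\}$, Proposition~\ref{X^tr=X^ass_X^dis_sublattice}(i) gives that $X^\dis$ is a (distributive) sub-lattice, hence a sub-trellis; for $X^\ass$ one uses Theorem~\ref{Closeness_of_join_for_rtr+joinass}(iii)--(iv), which shows that the meet of two $\meet$-associative elements is again $\meet$-associative and the join of two $\join$-associative elements is again $\join$-associative, so that $X^\ass=X^\meetAss\cap X^\joinAss$ is closed under both $\meet$ and $\join$. For $\alpha=\meetAss$, Theorem~\ref{Closeness_of_join_for_rtr+joinass}(iii) (or Proposition~\ref{left-_right-transitive_elements_are_meet-semi-trellis_join-semi-trellis}(iii)) gives closure under $\meet$; closure under $\join$ then follows because $X^\meetAss\subseteq X^\rtr$, so $X^\meetAss$ is a subset of $X^\rtr$ that is a $\meet$-sub-trellis, and Proposition~\ref{left-_right-transitive_elements_are_meet-semi-trellis_join-semi-trellis}(ii) together with the join formula of Proposition~\ref{Closeness_of_join_for_subtrellis}(i) shows the join of finitely many right-transitive elements lies in $X^\rtr$ — but one must still argue it lies back in $X^\meetAss$, which is where I expect the main obstacle to sit. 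For $\alpha=\rtr$ one similarly needs that $X^\rtr$ is closed under $\meet$; this is not among the parts of Proposition~\ref{left-_right-transitive_elements_are_meet-semi-trellis_join-semi-trellis}, so the cleanest route is probably to observe that under the finiteness assumption the relevant subsets coincide or to appeal to the structural results the paper has set up, and if necessary to give a short direct check that $x,y\in X^\rtr$ implies $x\meet y\in X^\rtr$ using part-preservation.

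Concretely, the steps I would carry out, in order, are: (1) record that $0\in X^\alpha$ and $1\in X^\alpha$ for every $\alpha$ in the list, so $X^\alpha$ is a bounded psoset containing $0$; (2) cite Proposition~\ref{Specific_sets_inclusion} to get $X^\alpha\subseteq X^\rtr$ (trivial for $\alpha=\rtr$); (3) for each $\alpha$, certify that $X^\alpha$ is a sub-trellis, using Proposition~\ref{X^tr=X^ass_X^dis_sublattice} for $\alpha=\dis$, the closure statements of Theorem~\ref{Closeness_of_join_for_rtr+joinass} for $\alpha\in\{\ass,\meetAss\}$, and a direct verification (or the finiteness-plus-structure argument) for $\alpha=\rtr$; (4) invoke Corollary~\ref{T_A_t-norm} with $A:=X^\alpha$ to conclude that $T^{[X^\alpha]}$ is a t-norm on~$\mathbb{T}$. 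The delicate point — and the one worth writing out carefully — is step~(3) for $\alpha\in\{\meetAss,\rtr\}$: one has immediate closure of $X^\meetAss$ under $\meet$, but closure under $\join$ (and closure of $X^\rtr$ under $\meet$) requires combining the join-closure of right-transitive elements with the fact that the resulting element still belongs to the more restrictive set, and I would make sure the appeal to Theorem~\ref{Closeness_of_join_for_rtr+joinass} is doing exactly that and not merely asserting membership in $X^\rtr$.
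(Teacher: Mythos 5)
Your overall strategy---apply Corollary~\ref{T_A_t-norm} with $A:=X^\alpha$---is exactly the paper's (the paper derives this corollary immediately from Propositions~\ref{Specific_sets_inclusion} and~\ref{X^tr=X^ass_X^dis_sublattice} and Corollary~\ref{T_A_t-norm}). However, your write-up contains two misreadings of the statement, one of which leaves a case uncovered.

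First, you assert that $\joinAss$ is ``deliberately absent from the list'' and that $X^\joinAss$ ``need not be contained in $X^\rtr$.'' Both claims are false: the corollary explicitly allows $\alpha=\joinAss$, and the containment does hold, since Proposition~\ref{Specific_sets_inclusion}(ii) gives $X^\joinAss\subseteq X^\tr$ while Proposition~\ref{Specific_sets_inclusion}(i) gives $X^\tr\subseteq X^\rtr$ (a transitive element is in particular right-transitive). As written, your argument omits one of the six cases, and the justification you give for omitting it is incorrect. The repair is one line, but it is needed.

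Second, the bulk of your effort---your step (3), which you yourself flag as containing ``the main obstacle''---goes into trying to prove that $X^\alpha$ is a sub-trellis. That is not something to be proved: it is a hypothesis of the corollary (``If $X^\alpha$ is a \emph{finite sub-trellis} of~$\mathbb{T}$\dots''). This is not a harmless redundancy, because the closure facts you are reaching for are not theorems: Example~\ref{counter} exhibits a bounded modular trellis in which $X^\rtr$ is not closed under $\meet$, so your proposed ``short direct check that $x,y\in X^\rtr$ implies $x\meet y\in X^\rtr$'' cannot succeed in general; this is precisely why the sub-trellis property is assumed rather than derived. Once the statement is read correctly, all that remains before invoking Corollary~\ref{T_A_t-norm} is to note that $X^\alpha$ is a finite subset of $X^\rtr$ containing $0$: finiteness and the sub-trellis property are hypotheses, $0,1\in X^\alpha$ as the paper records, and $X^\alpha\subseteq X^\rtr$ follows from Proposition~\ref{Specific_sets_inclusion} for every $\alpha$ in the list, including $\joinAss$.
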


Next, we illustrate the above results. 

\begin{example}\label{illu}
Let $\mathbb{T}=(\lbrace 0,a,b,c,d,e,1 \rbrace,\unlhd)$ be the bounded trellis given by the Hasse diagram in Fig.~\ref{Fig04}. 
 \begin{figure}[H]
\[\begin{tikzpicture}
\tikzstyle{estun}=[>=latex,thick,dotted]
    \vertex[fill] (0) at (0,0)  [label=below:$0$]  {};
    \vertex[fill] (a) at (-0.75,0.75)  [label=left:$a$]  {};
    \vertex[fill] (b) at (0.75,0.75)  [label=right:$b$]  {};
    \vertex[fill] (c) at (0,1.5)  [label=left:$c$]  {};
    \vertex[fill] (d) at (-0.75,2.25)  [label=left:$d$]  {};
    \vertex[fill] (e) at (0.75,2.25)  [label=right:$e$]  {};
    \vertex[fill] (1) at (0,3)  [label=above:$1$]  {};
   
    \path
        (0) edge (a)
        (0) edge (b)
        (b) edge (c)
        (a) edge (c)
        (c) edge (e)
        (c) edge (d)
        (d) edge (1)
        (e) edge (1) 
       
       ;
   \draw[estun] (a) to (d);   
        
\end{tikzpicture}\]
\caption{Hasse diagram of the bounded trellis in Example~\ref{illu}.}\label{Fig04}
\end{figure}

One easily verifies that $X^{\rtr} = \{0,b,c,d,e,1\}$ is a sub-lattice of~$\mathbb{T}$. The interior operator $\lambda_{X^{\rtr}}$ is listed in Table~\ref{IntOp_lammbda_A}. 
\begin{table}[H]
\begin{center}
\begin{tabular}{|c|c|c|c|c|c|c|c|}
\hline 
      & $0$ & $a$ & $b$ & $c$ & $d$ & $e$ & $1$ \\ 
\hline 
$\lambda_{X^{\rtr}}$     & $0$ & $0$ & $b$ & $c$ & $d$ & $e$  & $1$  \\ 
\hline
\end{tabular}
\caption{The interior operator $\lambda_{X^{\rtr}}$ of Example~\ref{illu}.}\label{IntOp_lammbda_A}
\end{center}
\end{table}
It is obvious that if $A$ is a bounded sub-lattice of~$\mathbb{T}$, then for any $a\in A$, the binary operation $V_a$ on $A$ defined by $V_a(x,y)=x\meet y\meet a$ is a t-norm on $A$. Next, we illustrate Corollary~\ref{T_A,V_t-norm}. The t-norms $T^{[X^\rtr,V_b]}$, $T^{[X^\rtr,V_c]}$, $T^{[X^\rtr,V_d]}$  and $T^{[X^\rtr,V_e]}$ are listed in Table~\ref{A,V_b}.
\begin{table}[H]
\begin{center}
\begin{tabular}{|c|c|c|c|c|c|}
\hline 
$T^{[X^\rtr,V_b]}$ & $a$ & $b$ & $c$ & $d$ & $e$ \\ 
\hline 
$a$ &    $0$ & \cg $0$ & $0$ & \cg $0$ & $0$ \\ 
\hline 
 $b$ &	 \cg $0$ & \cg $b$ & \cg $b$ & \cg $b$ & \cg $b$\\ 
\hline 
$c$ &	 $0$ & \cg $b$ & $b$ & $b$ & $b$\\ 
\hline 
$d$ &    \cg $0$ & \cg $b$ & $b$ & $b$ & $b$\\ 
\hline 
$e$ &    $0$ & \cg $b$ & $b$ & $b$ & $b$\\ 
\hline 
\end{tabular}
\quad 
\begin{tabular}{|c|c|c|c|c|c|}
\hline
$T^{[X^\rtr,V_c]}$ & $a$ & $b$ & $c$ & $d$ & $e$ \\
\hline 
$a$ &  $0$ & \cg $0$ & $0$ & \cg $0$ & $0$ \\ 
\hline 
 $b$ & \cg $0$ & \cg $b$ & \cg $b$ & \cg $b$ & \cg $b$ \\ \hline 
$c$ & $0$ & \cg $b$ & \cg $c$ & \cg $c$ & \cg $c$  \\ 
\hline 
$d$ &  \cg $0$ & \cg $b$ & \cg $c$ & $c$ & \cg $c$ \\ \hline 
$e$ &   $0$ & \cg $b$ & \cg $c$ & \cg $c$ & $c$ \\ 
\hline 
\end{tabular}\\ 
\vspace*{.2cm}
\begin{tabular}{|c|c|c|c|c|c|}
\hline
$T^{[X^\rtr,V_d]}$ & $a$ & $b$ & $c$ & $d$ & $e$ \\
\hline 
$a$ &  $0$ & \cg $0$ & $0$ & \cg $0$ & $0$ \\ 
\hline 
 $b$ & \cg $0$ & \cg $b$ & \cg $b$ & \cg $b$ & \cg $b$ \\ \hline 
$c$ & $0$ & \cg $b$ & \cg $c$ & \cg $c$ & \cg $c$  \\ 
\hline 
$d$ &  \cg $0$ & \cg $b$ & \cg $c$ & \cg$d$ & \cg $c$ \\ \hline 
$e$ &   $0$ & \cg $b$ & \cg $c$ & \cg $c$ & $c$ \\ 
\hline 
\end{tabular} 
\quad 
\begin{tabular}{|c|c|c|c|c|c|}
\hline
$T^{[X^\rtr,V_e]}$ & $a$ & $b$ & $c$ & $d$ & $e$ \\
\hline 
$a$ &  $0$ & \cg $0$ & $0$ & \cg $0$ & $0$ \\ 
\hline 
 $b$ & \cg $0$ & \cg $b$ & \cg $b$ & \cg $b$ & \cg $b$ \\ \hline 
$c$ & $0$ & \cg $b$ & \cg $c$ & \cg $c$ & \cg $c$  \\ 
\hline 
$d$ &  \cg $0$ & \cg $b$ & \cg $c$ &  $c$ & \cg $c$ \\ \hline 
$e$ &   $0$ & \cg $b$ & \cg $c$ & \cg $c$ & \cg $e$ \\ 
\hline 
\end{tabular}
\caption{The t-norms $T^{[X^\rtr,V_b]}$, $T^{[X^\rtr,V_c]}$, $T^{[X^\rtr,V_d]}$ and $T^{[X^\rtr,V_e]}$ of Example~\ref{illu}.}\label{A,V_b}
\end{center}
\end{table}
The greatest t-norm on~$\mathbb{T}$ is given by Table~\ref{greatest_t-norm}, and differs from the meet operation in some points involving the non-right-transitive element $a$. 
\begin{table}[H]
\begin{center}
\begin{tabular}{|c|c|c|c|c|c|}
\hline 
$T$  &       $a$ & $b$ & $c$ & $d$ & $e$ \\ 
\hline 
$a$ &        $0$ & \cg $0$ & $0$ & \cg $0$ & \cg $a$ \\ 
\hline 
 $b$ &	     \cg $0$ & \cg $b$ & \cg $b$ & \cg $b$ & \cg $b$ \\ 
\hline 
$c$ &	    $0$ & \cg $b$ & \cg $c$ & \cg $c$ & \cg $c$ \\ 
\hline 
$d$ &       \cg $0$ & \cg $b$ & \cg $c$ & \cg $d$ & \cg $c$ \\ 
\hline 
$e$ &       \cg $a$ & \cg $b$ & \cg $c$ & \cg $c$ & \cg $e$ \\ 
\hline 
\end{tabular}
\caption{The greatest t-norm of Example~\ref{illu}.}\label{greatest_t-norm}
\end{center}
\end{table}
\end{example}

\begin{example}\label{Detail_Example3.5}
Consider again the bounded modular trellis in Example~\ref{sixtnorms}
and the six t-norms on it. One easily verifies that $X^\rtr = \{0,b,c,1\}$ and $X^\dis=X^\ass=X^\meetAss=X^\joinAss=X^\tr=\{ 0,1\}$.
It is clear that $T_1=T^{[X^\dis]}=T^{[X^\ass]}=T^{[X^\meetAss]}=T^{[X^\joinAss]}=T^{[X^\tr]}(=T_\D)$, $T_2=T^{[\{0,c\}]}=T^{[\{0,c,1\}]}$, $T_4=T^{[\{0,b\}]}=T^{[\{0,b,1\}]}$ and $T_6=T^{[\{0,b,c\}]}=T^{[X^\rtr]}$. Since the subsets $A$ considered cover all subtrellises of $X^\rtr$ containing $0$, it follows that the t-norms $T_3$ and $T_5$ cannot be obtained using the construction method in Corollary~\ref{T_A_t-norm}.
\end{example}

The following example illustrates Corollary~\ref{T_A_t-norm} on a bounded trellis that includes a cycle. 

\begin{example}\label{t-norm_trellis_with_cycle} 
Let $\mathbb{T}=(\{0,a,b,c,d,e,f,1\}, \unlhd , \meet , \join)$ be the bounded trellis given by the Hasse diagram in Fig.~\ref{Fig07} and pseudo-order $\unlhd$
in Table~\ref{tablepseudo} (not mentioning $0$ and $1$). Note that $\{b,c,e,f\}$ is a cycle. 

\begin{figure}[H]
\[\begin{tikzpicture}
\tikzstyle{estun}=[>=latex,thick,dotted]
    \vertex[fill] (0) at (0,0)  [label=right:$0$]  {};
    \vertex[fill] (a) at (0,0.75)  [label=right:$a$]  {};
    \vertex[fill] (b) at (0,1.5)  [label=right:$b$]  {};
    \vertex[fill] (c) at (0,2.25)  [label=left:$c$]  {};
    \vertex[fill] (d) at (1,3)  [label=right:$d$]  {};
    \vertex[fill] (e) at (0,3)  [label=left:$e$]  {};
    \vertex[fill] (f) at (0,3.75)  [label=left:$f$]  {};
    \vertex[fill] (1) at (0,4.50)  [label=right:$1$]  {};
   
    \path
        (0) edge (a)
        (a) edge (b)
        (b) edge (d)
        (f) edge (1)
        (d) edge (1)   
            ;
        \draw [very thick] (b) to (c);
        \draw [very thick] (c) to (e);
        \draw [very thick] (e) to (f);

  \draw[estun] (b) to [bend left=65] (e); 
   \draw[estun] (b) to [bend right=30] (f);
  \draw[very thick, ->-=0.6] (f) to [bend right=70] (b);
\end{tikzpicture}\]
\caption{Hasse diagram of the bounded trellis in Example~\ref{t-norm_trellis_with_cycle}.}\label{Fig07}
\end{figure}
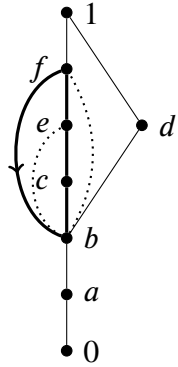
\begin{table}[H]
 \begin{center}
 \begin{tabular}{|c|c|c|c|c|c|c|}
\hline 
$\unlhd$ & $a$ & $b$ & $c$ & $d$ & $e$ & $f$ \\
\hline   
$a$	     &$1$ & $1$& $1$ & $1$ & $1$ & $1$ \\ 
\hline 
$b$	     &$0$ & $1$& $1$ & $1$ & $0$ & $0$ \\ 
\hline 
$c$      &$0$ & $0$& $1$ & $0$ & $1$ & $1$ \\ 
\hline 
$d$      &$0$ & $0$& $0$ & $1$ & $0$ & $0$ \\
\hline 
$e$      &$0$ & $0$& $0$ & $0$ & $1$ & $1$ \\ 
\hline 
$f$      &$0$ & $1$& $0$ & $0$ & $0$ & $1$ \\ 
\hline 
\end{tabular}
    \caption{Pseudo-order of the bounded trellis in Example~\ref{t-norm_trellis_with_cycle}.}
    \label{tablepseudo}
    \end{center}
\end{table}

One easily verifies that $X^{\rtr} = \{0,a,d,1\}$ is a sub-trellis of~$\mathbb{T}$. The interior operator $\lambda_{X^{\rtr}}$ is listed in Table~\ref{tablelambda_X^rtr}.
\begin{table}[H]
\begin{center}
\begin{tabular}{|c|c|c|c|c|c|c|c|c|}
\hline 
      & $0$ & $a$ & $b$ & $c$ & $d$ & $e$ & $f$ & $1$ \\ 
\hline 
$\lambda_{X^{\rtr}}$  & $0$ & $a$ & $a$ & $a$ & $d$ & $a$ & $a$ & $1$  \\ 
\hline
\end{tabular}
\caption{The interior operator $\lambda_{X^{\rtr}}$ of Example~\ref{t-norm_trellis_with_cycle}.}\label{tablelambda_X^rtr}
\end{center}
\end{table}
The t-norm $T^{[X^\rtr]}$ is given by Table~\ref{tableT^rtr}.
\begin{table}[H]
\begin{center}
\begin{tabular}{|c|c|c|c|c|c|c|}
\hline
$T^{[X^\rtr]}$ & $a$ & $b$ & $c$ & $d$ & $e$ & $f$ \\
\hline 
$a$ &  \cg $a$ & \cg $a$ & \cg $a$ & \cg $a$ & \cg $a$ & \cg $a$\\ 
\hline 
 $b$ & \cg $a$ & $a$ &  $a$ &  $a$ & \cg $a$ & $a$ \\ 
\hline 
$c$ & \cg $a$ &  $a$ & $a$ &  $a$ &  $a$ & $a$  \\ 
\hline 
$d$ &  \cg $a$ & $a$ &  $a$ & \cg $d$ & \cg $a$ & \cg $a$ \\ 
\hline 
$e$ &   \cg $a$ & \cg $a$ &  $a$ & \cg $a$ &  $a$ & $a$\\ \hline 
$f$ &   \cg $a$ &  $a$ &  $a$ & \cg $a$ & $a$ & $a$\\ 
\hline 
\end{tabular}
\caption{The t-norm $T^{[X^\rtr]}$ of Example~\ref{t-norm_trellis_with_cycle}.}\label{tableT^rtr}
\end{center}
\end{table}
\end{example} 
Combining Proposition~\ref{A_subtrellis_on_pseudo-chain} and Corollary~\ref{T_A_t-norm} leads to the following result.
\begin{proposition}\label{Special_case_pseud-chain}
Let $\mathbb{T}=(X,\unlhd,\meet,\join,0,1)$ be a bounded trellis and $A$ a subset of $X^\rtr$ containing $0$. If $\mathbb{T}$ is a pseudo-chain and $A$ is finite, then $T^{[A]}$ is a t-norm on~$\mathbb{T}$. Moreover, $T^{[A]}\unlhd T^{[X^\rtr]}$.
\end{proposition}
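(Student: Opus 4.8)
The plan is to prove the two assertions in turn: the statement that $T^{[A]}$ is a t-norm is essentially bookkeeping with results already established, while the inequality $T^{[A]}\unlhd T^{[X^\rtr]}$ reduces, after one case split, to the fact that $\meet$ acts as a minimum on the right-transitive elements of a pseudo-chain. For the first assertion: since $\mathbb{T}$ is a pseudo-chain and $A\subseteq X^\rtr$, Proposition~\ref{A_subtrellis_on_pseudo-chain} shows that $A$ is a sub-trellis of~$\mathbb{T}$; as $A$ is in addition finite and contains~$0$, Corollary~\ref{T_A_t-norm} applies verbatim and yields that $T^{[A]}$ is a t-norm on~$\mathbb{T}$. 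The same argument with $A$ replaced by $X^\rtr$ shows that $T^{[X^\rtr]}$ is a t-norm as well, once one is in the situation (tacit in the second assertion) where $\lambda_{X^\rtr}$ is well defined --- e.g.\ $X^\rtr$ finite --- so that Proposition~\ref{Closeness_of_join_for_subtrellis} guarantees the relevant suprema exist.

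For the inequality, I would fix $x,y\in X$ and first dispatch the case $x=1$ or $y=1$: then both $T^{[A]}(x,y)$ and $T^{[X^\rtr]}(x,y)$ equal the remaining argument, so equality holds. Assume therefore $x\neq 1$ and $y\neq 1$. By Proposition~\ref{lambda_is_an_interior_operator} the operators $\lambda_A$ and $\lambda_{X^\rtr}$ fix their ranges $A$ and $X^\rtr$ pointwise, so the two non-trivial lines of~\eqref{T^{A}_in_detail} merge and I may write $T^{[A]}(x,y)=\lambda_A(x)\meet\lambda_A(y)$ and $T^{[X^\rtr]}(x,y)=\lambda_{X^\rtr}(x)\meet\lambda_{X^\rtr}(y)$. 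Since $A\subseteq X^\rtr$ we have $A\cap\downarrow z\subseteq X^\rtr\cap\downarrow z$ for every $z\in X$; and because the least upper bound of a set is $\unlhd$ the least upper bound of any superset (both existing by Proposition~\ref{Closeness_of_join_for_subtrellis}), this gives $\lambda_A(z)\unlhd\lambda_{X^\rtr}(z)$ for each $z$, in particular for $z\in\{x,y\}$.

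The remaining, and only genuine, obstacle is to upgrade these two coordinatewise inequalities to $\lambda_A(x)\meet\lambda_A(y)\unlhd\lambda_{X^\rtr}(x)\meet\lambda_{X^\rtr}(y)$; this is delicate precisely because $\meet$ is not increasing on a proper trellis (Proposition~\ref{meet-join_Increasing_equi_transitive}). I would circumvent it by noting that all four elements $\lambda_A(x)$, $\lambda_A(y)$, $\lambda_{X^\rtr}(x)$, $\lambda_{X^\rtr}(y)$ lie in $X^\rtr$, and that on a pseudo-chain $X^\rtr$ is totally ordered by~$\unlhd$: for $u,v\in X^\rtr$ one has $u\lesssim v$ or $v\lesssim u$, and Proposition~\ref{theoremoftransitiveelement}(i)(a) upgrades this to $u\unlhd v$ or $v\unlhd u$. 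On such a chain $u\meet v$ is simply the $\unlhd$-smaller of $u$ and $v$, and taking the smaller of two elements is an increasing operation; hence the two coordinatewise inequalities propagate through $\meet$, so $T^{[A]}(x,y)\unlhd T^{[X^\rtr]}(x,y)$. Combining this with the case $1\in\{x,y\}$ completes the proof.
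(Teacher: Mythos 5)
Your proof is correct. For the first assertion it coincides exactly with the paper's argument, which consists of the single sentence ``Combining Proposition~\ref{A_subtrellis_on_pseudo-chain} and Corollary~\ref{T_A_t-norm} leads to the following result''; the paper gives no argument whatsoever for the inequality $T^{[A]}\unlhd T^{[X^\rtr]}$, so your second half supplies a proof the paper omits. Your treatment of that half is sound at every step that matters: the reduction to $\lambda_A(x)\meet\lambda_A(y)\unlhd\lambda_{X^\rtr}(x)\meet\lambda_{X^\rtr}(y)$, the observation that $\lambda_A(z)\unlhd\lambda_{X^\rtr}(z)$ because the supremum of a subset is below the supremum of a superset, and --- crucially --- the recognition that the coordinatewise inequalities cannot simply be pushed through $\meet$ on a proper trellis. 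Your workaround (on a pseudo-chain any two elements of $X^\rtr$ are $\unlhd$-comparable by Proposition~\ref{theoremoftransitiveelement}(i)(a), $\unlhd$ restricted to $X^\rtr$ is transitive by right-transitivity, so $\meet$ is the minimum of a genuine chain and hence increasing there) is exactly the missing ingredient; one can phrase it even more directly by noting that if, say, $\lambda_A(x)\unlhd\lambda_A(y)$ then $\lambda_A(x)$ is, by its own right-transitivity, a lower bound of both $\lambda_{X^\rtr}(x)$ and $\lambda_{X^\rtr}(y)$ and hence below their meet. Your remark that the second assertion tacitly presupposes $\lambda_{X^\rtr}$ to be well defined (e.g.\ $X^\rtr$ finite, so that Proposition~\ref{Closeness_of_join_for_subtrellis} applies) is a fair criticism of the statement as printed rather than a defect of your proof.
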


Combining Corollary~\ref{Eq_subsets_pseud-chain_and_modular},  Proposition~\ref{X^tr=X^ass_X^dis_sublattice} and Corollary~\ref{T_A_t-norm} leads to the following result.
\begin{corollary}\label{Special_case_modular}
Let $\mathbb{T}=(X,\unlhd,\meet,\join,0,1)$ be a bounded modular trellis and $\alpha \in\{\ass,\meetAss,\joinAss,\tr\}$. If $X^\alpha$ is finite, then $T^{[X^\alpha]}$ is a t-norm on~$\mathbb{T}$. Moreover, $T^{[X^\ass]}=T^{[X^\meetAss]}=T^{[X^\joinAss]}=T^{[X^\tr]}$.
\end{corollary}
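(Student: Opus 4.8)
The plan is to reduce the whole statement to a single application of Corollary~\ref{T_A_t-norm}. Since $\mathbb{T}$ is modular, Corollary~\ref{Eq_subsets_pseud-chain_and_modular} gives $X^\ass = X^\meetAss = X^\joinAss = X^\tr$, so it suffices to treat the single set $A := X^\tr$. Once $T^{[X^\tr]}$ is shown to be a t-norm on $\mathbb{T}$, the claimed chain of equalities $T^{[X^\ass]} = T^{[X^\meetAss]} = T^{[X^\joinAss]} = T^{[X^\tr]}$ is immediate, because the construction $A \mapsto T^{[A]}$ (via $\lambda_A$) depends only on the underlying set $A$, not on which transitivity flavour it is named after.

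Next I would check the three hypotheses of Corollary~\ref{T_A_t-norm} for $A = X^\tr$. Finiteness of $A$ is exactly the standing assumption that $X^\alpha$ is finite. The inclusion $A = X^\tr \subseteq X^\rtr$ is part of the chain of inclusions in Proposition~\ref{Specific_sets_inclusion}(i). That $0 \in A$ (in fact also $1 \in A$) is the observation recorded just after the Notation block, namely that $(X^\alpha,\unlhd,0,1)$ is a bounded psoset for every $\alpha$ in the relevant list. Finally, $A = X^\tr$ is a sub-trellis of $\mathbb{T}$: by Proposition~\ref{X^tr=X^ass_X^dis_sublattice}(ii), since $\mathbb{T}$ is modular, $X^\tr = X^\ass$ is even a sub-lattice, hence a fortiori a sub-trellis.

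With all hypotheses verified, Corollary~\ref{T_A_t-norm} yields that $T^{[X^\tr]}$ is a t-norm on $\mathbb{T}$, and by the reduction in the first step the same conclusion and the four-fold equality follow for $T^{[X^\ass]}$, $T^{[X^\meetAss]}$ and $T^{[X^\joinAss]}$. I expect no real obstacle here; the only point requiring care is bookkeeping — one must invoke the correct earlier results to see that $A$ is a sub-trellis containing $0$ and sits inside $X^\rtr$, rather than re-deriving these facts — since the analytic content (that the meet-based construction on an interior range of a $\meet$-semi-trellis yields a t-norm) has already been established in Theorem~\ref{Extended_t-norm_based_on_V_and_int-operator}, Corollary~\ref{t-norm_based_on_int-operator} and Proposition~\ref{lambda_is_an_interior_operator}.
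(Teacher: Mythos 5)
Your proposal is correct and follows exactly the paper's route: the paper proves this by combining Corollary~\ref{Eq_subsets_pseud-chain_and_modular} (to collapse the four sets into one), Proposition~\ref{X^tr=X^ass_X^dis_sublattice} (to get that $X^\tr=X^\ass$ is a sub-lattice, hence a sub-trellis), and Corollary~\ref{T_A_t-norm}. Your additional bookkeeping (citing Proposition~\ref{Specific_sets_inclusion} for $X^\tr\subseteq X^\rtr$ and the remark after the Notation block for $0\in X^\alpha$) merely makes explicit the hypotheses the paper leaves implicit.
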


In Corollary~\ref{Special_case_modular}, we did not consider the binary operation $T^{[X^\rtr]}$ 
since it is not necessarily a t-norm on~$\mathbb{T}$. This is due to the fact that $X^\rtr$ is not 
a sub-trellis of a bounded modular trellis $\mathbb{T}$ in general.
This is illustrated in the following example.

\begin{example}\label{counter}
Consider again the bounded modular trellis in Remark~\ref{Z_condition-not-satis}. One can verify that $X^\rtr=\{0,a,c,d,e,1\}$. It is not a $\wedge$-sub-trellis 
of~$\mathbb{T}$ since $c,d \in X^\rtr$ and $c \wedge d =b \notin X^\rtr$. Hence, it is not
a sub-trellis of~$\mathbb{T}$. Moreover, the binary operation $T^{[X^\rtr]}$ on~$\mathbb{T}$ given by Table~\ref{T^rtr_nt-norm} is not a t-norm since it is not increasing. Indeed, 
$c \unlhd e$, $d \unlhd e$, while $T^{[X^\rtr]}(c,d)=b \ntrianglelefteq e =T^{[X^\rtr]}(e,e)$.
\begin{table}[H] 
\begin{center}
\begin{tabular}{|c|c|c|c|c|c|}
\hline 
$T^{[X^\rtr]}$ & $a$& $b$ & $c$ & $d$ & $e$ \\ 
\hline 
$a$	   & \cg $a$& \cg $a$ & \cg $a$ & \cg $a$ & \cg $a$  \\  
\hline 
$b$	   & \cg $a$&  $a$ &  $a$ & $a$ & \cg $a$ \\ 
\hline 
$c$    & \cg $a$ &  $a$ &\cg  $c$ &\cg  $b$ & \cg $c$ \\ 
\hline 
$d$    & \cg $a$&  $a$ & \cg $b$ & \cg $d$ & \cg $d$  \\
\hline 
$e$    & \cg $a$& \cg $a$ & \cg $c$ & \cg $d$ & \cg $e$  \\
\hline 
\end{tabular}
\caption{The binary operation $T^{[X^\rtr]}$ of Example~\ref{counter}.}\label{T^rtr_nt-norm}
\end{center}
\end{table}
\end{example}

\section{Conclusion}\label{Conclusion}
In this paper, we have introduced the notion of a t-norm on bounded psosets and trellises and provided some basic examples. As the meet operation of a proper bounded trellis is not a t-norm, we have explored how we can construct a t-norm starting from a t-norm on an appropriate interior range.
We have provided illustrative examples by introducing interior operators whose range is an appropriate finite subset of the set of right-transitive elements of the given trellis. Similar results can be formulated for t-conorms on bounded psosets and trellises by considering the join operation, the set of left-transitive elements and closure operators. 
Since the set of right-transitive elements does not include non-trivial cycles, future work will explore how cycles can be involved. We anticipate that this study will open the door to various follow-up studies on t-norms and t-conorms on bounded psosets and trellises, and, more generally, on aggregation functions in the same setting.

\end{document}